\documentclass[12pt, reqno]{amsart}

\usepackage[T1]{fontenc}

\usepackage{amsmath, amssymb, amsfonts, amsthm}
\usepackage{mathtools}
\usepackage{latexsym}

\usepackage{amsbsy}

\usepackage[left=30mm, right=30mm, top=20mm, bottom=20mm]{geometry}

\usepackage{enumitem}
\usepackage{comment}

\usepackage{xcolor}

\usepackage{tikz}
\usetikzlibrary{circuits, intersections}
\usetikzlibrary{plotmarks}
\usetikzlibrary{angles, quotes}

\usepackage{graphicx}
\usepackage{pdfpages}

\usepackage[super]{nth}

\usepackage{hyperref}
\usepackage[colorinlistoftodos]{todonotes}

\newtheorem{thmintro}{Theorem}

\newtheorem{corintro}[thmintro]{Corollary}

\theoremstyle{definition}
\newtheorem{rkintro}{Remark}

\theoremstyle{plain}
\newtheorem{theorem}{Theorem}[section]
\newtheorem{proposition}[theorem]{Proposition}
\newtheorem{lemma}[theorem]{Lemma}
\newtheorem{corollary}[theorem]{Corollary}

\theoremstyle{definition}

\newtheorem{definition}[theorem]{Definition}
\newtheorem{convention}[theorem]{Convention}
\newtheorem{example}[theorem]{Example}

\theoremstyle{remark}
\newtheorem{remark}[theorem]{Remark}

\renewcommand{\phi}{\varphi}
\renewcommand{\epsilon}{\varepsilon}

\newcommand{\NN}{\mathbb{N}}
\newcommand{\ZZ}{\mathbb{Z}}

\newcommand{\FF}{\mathbb{F}}

\newcommand{\KK}{\mathbb{K}}

\newcommand{\C}{\mathcal{C}}
\renewcommand{\P}{\mathcal{P}}
\newcommand{\Cbf}{\textbf{\textup{C}}}

\DeclareMathOperator{\id}{id}

\DeclareMathOperator{\Aut}{Aut}
\DeclareMathOperator{\Sym}{Sym}

\numberwithin{equation}{section} 
\setlength{\parindent}{0pt}
\setlength{\parskip}{3mm}

\begin{document}

\title{RGD-systems over $\FF_2$}
\author{Sebastian Bischof}

\thanks{email: bischof.math@icloud.com}

\thanks{Mathematisches Institut, Arndtstra\ss e 2, 35392 Gie\ss en, Germany}

\thanks{Keywords: Groups of Kac-Moody type, RGD-systems, Commutation relations, Nilpotent groups}

\thanks{Mathematics Subject Classification 2020: 20E42, 51E24, 20E08, 20E36, 20F12}

\begin{abstract}
	In this paper we prove that an RGD-system over $\FF_2$ with prescribed commutation relations exists if and only if the commutation relations are Weyl-invariant and can be realized in the group $U_+$. This result gives us a machinery to produce new examples of RGD-systems with complicated commutation relations. We also discuss some applications of this result.
\end{abstract}

\maketitle

\section{Introduction}

In \cite{Ti92} Tits introduced RGD-systems in order to describe groups of Kac-Moody type. By definition, every RGD-system has a \emph{type} which is given by a Coxeter system, and to every Coxeter system one can associate its set of roots $\Phi$ (viewed as half spaces). An \emph{RGD-system of type $(W, S)$} is a pair $\left( G, (U_{\alpha})_{\alpha \in \Phi} \right)$ consisting of a group $G$ together with a family of subgroups $(U_{\alpha})_{\alpha \in \Phi}$ called \emph{root subgroups} indexed by the set of roots $\Phi$ satisfying some axioms. One key axiom makes an assumption about the commutation relations between root groups corresponding to prenilpotent pairs of roots $\{ \alpha, \beta \}$ where $(\alpha, \beta)$ is a finite set of roots determined by $\alpha$ and $\beta$ (we refer to Section \ref{sec:Premilinaries} for the precise definitions):
\begin{equation}
	[U_{\alpha}, U_{\beta}] \leq \langle U_{\gamma} \mid \gamma \in (\alpha, \beta) \rangle. \tag{RGD$1$}
\end{equation}

A fundamental question is whether we can determine all possible commutation relations. For each root $\alpha$ we denote by $r_{\alpha}$ the unique reflection which interchanges $\alpha$ and its opposite root. Let $\{ \alpha, \beta \}$ be a prenilpotent pair of roots. If $o(r_{\alpha} r_{\beta}) < \infty$, then the commutation relation between $U_{\alpha}$ and $U_{\beta}$ is \emph{known} from the classification of Moufang polygons by Tits and Weiss \cite{TW02}. They have shown that there exists a parametrization of the root groups by some algebraic structures and the commutation relations between these root groups can be expressed in terms of this parametrization. In the case $o(r_{\alpha} r_{\beta}) = \infty$ the situation is not so well understood yet. According to current knowledge it is unknown how complicated the commutation relations can be. We will later in the introduction come back to this question.

Originally it was our motivation to construct new examples of RGD-systems of \emph{$2$-spherical} type with non-trivial commutation relations. Due to results of M\"uhlherr-Ronan \cite{MR95} and Abramenko-M\"uhlherr \cite{AM97}, such a construction is only feasible if the root groups are not too large. But in this case the action of the torus $H := \bigcap_{\alpha \in \Phi} N_G(U_{\alpha})$ -- in general a useful tool, which also restricts the commutation relations -- does not have such a deep impact. Even worse, if all root groups have cardinality $2$ (we call such RGD-systems \emph{over} $\FF_2$), then the torus is trivial. On the other hand, a trivial torus simplifies the construction of RGD-systems, which is in general a difficult problem.

We are interested in conditions/restrictions on the commutation relations which ensure the existence of an RGD-system with these prescribed commutation relations. As already mentioned, the commutation relations between prenilpotent pairs of roots $\{\alpha, \beta\}$ with $o(r_{\alpha} r_{\beta}) < \infty$ are prescribed by \cite{TW02}, but we have some flexibility in the case $o(r_{\alpha} r_{\beta}) = \infty$. 

In this paper we investigate RGD-systems over $\FF_2$. In Section \ref{Section: commutator blueprints} we introduce the notion of \emph{commutator blueprints}, which are purely combinatorial objects. They prescribe the structure of commutation relations between prenilpotent pairs of \emph{positive} roots and give rise to the groups $U_w$ -- these groups appear naturally as subgroups of RGD-systems and are generated by suitable root groups. We denote the direct limit of the groups $U_w$ by $U_+$. To each RGD-system over $\FF_2$ one can associate a commutator blueprint. Such blueprints are called \emph{integrable}. One can show that every integrable commutator blueprint is \emph{faithful} (the canonical homomorphisms $U_w \to U_+$ are injective) and \emph{Weyl-invariant} (roughly speaking:\ the commutation relations are Weyl-invariant). It turns out that these two necessary conditions of integrability are already sufficient and lead to the main result of this article (cf.\ Remark \ref{Remark: Weyl-invariance} and Theorem \ref{RGDsystem}):

\begin{thmintro}\label{Main result: Theorem}
	For every commutator blueprint $\mathcal{M}$, the following are equivalent:
	\begin{enumerate}[label=(\roman*)]
		\item $\mathcal{M}$ is integrable.
		
		\item $\mathcal{M}$ is faithful and Weyl-invariant.
	\end{enumerate}
\end{thmintro}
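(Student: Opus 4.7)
The plan is to tackle the non-trivial direction (ii)$\Rightarrow$(i), since the converse is mentioned in the introduction as a built-in necessary condition. So assume $\mathcal{M}$ is faithful and Weyl-invariant, and the goal is to produce an RGD-system over $\FF_2$ whose associated commutator blueprint is $\mathcal{M}$.

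The first step is to make the positive structure concrete. By faithfulness, the canonical maps $U_w \to U_+$ are injective, so each $U_w$ may be identified with a subgroup of $U_+$, and one can single out, inside $U_+$, a root subgroup $U_\alpha$ for every positive root $\alpha$ (namely the image of the rank-one generator sitting in any $U_w$ for which $\alpha$ lies in the corresponding set of positive roots; faithfulness makes this unambiguous). The second step is to exploit Weyl-invariance in the form that, for every $s\in S$, the commutation data are compatible with the action of $s$ on roots. This should let one construct, for each $s$, an automorphism-type datum relating $U_w$ and $U_{sw}$, and, iterating, a ``mirror'' group $U_-$ equipped with root subgroups $U_\alpha$ ($\alpha \in -\Phi_+$) whose internal structure is an isomorphic copy of that of $U_+$.

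With $U_+$ and $U_-$ in hand, the third step is to amalgamate them into the full group $G$. The natural construction is to take the free product $U_+ * U_-$ modulo, for each simple root $\alpha_s$, the rank-one relations making $\langle U_{\alpha_s}, U_{-\alpha_s}\rangle$ isomorphic to $\mathrm{SL}_2(\FF_2)\cong \Sym(3)$ (this is where working over $\FF_2$ is decisive, since no torus appears and the rank-one group is forced); one then checks that $G$ admits a Bruhat-type decomposition indexed by $W$ and that the positive and negative halves retain their blueprint structure. Equivalently, one may first construct a twin building of type $(W,S)$ directly from $\mathcal{M}$ and let $G$ act on it, letting the $U_\alpha$ act as the root group fixators of the corresponding half-apartments; Weyl-invariance is exactly what allows one to propagate the commutation relations to root groups across the whole building via the $W$-action on apartments. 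The RGD axioms are then verified: (RGD$1$) follows from the blueprint on positive pairs and from Weyl-invariance plus the rank-one relations for mixed and negative prenilpotent pairs; the remaining axioms follow from the amalgam/building setup.

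The main obstacle I expect is the crucial non-collapse statement: that after imposing the rank-one identifications, the canonical map $U_+ \hookrightarrow G$ is still injective and the root subgroups remain distinct and nontrivial. In other words, the amalgam does not accidentally impose extra relations on $U_+$ beyond those prescribed by $\mathcal{M}$. This is typically the technical heart of such constructions and is where one genuinely needs the combination of both hypotheses: faithfulness, to guarantee that the positive side is not pre-collapsed; and Weyl-invariance, to ensure that transferring commutation relations by simple reflections through the rank-one groups is consistent. The cleanest way is presumably to exhibit a faithful action of $G$ on a combinatorial object (a twin building or a twin tree-like complex) built directly from the $U_w$'s, so that the stabilizer structure verifies both the root group distinctness (RGD$3$-type) and the Bruhat decomposition (RGD$4$-type) simultaneously.
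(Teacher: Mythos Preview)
Your outline has the right shape and you correctly isolate the crux (non-collapse of $U_+$ after imposing the rank-one relations), but the concrete mechanism you propose has a gap, and the paper's route is genuinely different from ``build $U_-$ and amalgamate $U_+ * U_-$''.

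The problem with taking $U_+ * U_-$ modulo only the $\Sym(3)$ relations in each $\langle U_{\alpha_s}, U_{-\alpha_s}\rangle$ is that this quotient does not yet know how $U_{-\alpha_s}$ (equivalently, the element $\tau_s := u_{-s} u_s u_{-s}$) should conjugate the other positive root groups. Axiom (RGD2) demands $\tau_s U_\beta \tau_s^{-1} = U_{s\beta}$ for all $\beta$, and nothing in the rank-one relations forces this; so either the quotient is too large to be an RGD-system, or you must add these conjugation relations by hand, at which point the whole question becomes whether those added relations are consistent with the blueprint and with each other. That consistency check is precisely the hard content, and it is not addressed by your sketch.

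The paper avoids $U_-$ altogether. It first decomposes $U_+ = U_s \ltimes N_s$ and then, using Weyl-invariance in a detailed way, constructs an explicit automorphism $\tau_s \in \Aut(N_s)$ with $\tau_s(u_\alpha)=u_{s\alpha}$ and verifies the single nontrivial identity $(u_s\tau_s)^3=1$ in $\Aut(N_s)$ (this is the technical heart, carried out root-by-root for each rank-$2$ residue type $A_1\times A_1, A_2, B_2, G_2$). This yields rank-$1$ parabolics $P_s = \Sym(3)\ltimes N_s$ containing $U_+$. The non-collapse you anticipate is then proved not by abstract amalgam arguments but by exhibiting an explicit chamber system $\Cbf$ built from cosets $gU_w$ in $U_+$, showing each $P_s$ acts faithfully on $\Cbf$, and checking that the braid relations $(\tau_s\tau_t)^{m_{st}}$ act trivially (again a case analysis, with a lengthy appendix for $m_{st}=6$). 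The group $G$ is the direct limit of $U_+$, the $P_s$, and $W$, and the faithful action on $\Cbf$ gives the injectivity of $P_s \to G$ needed for (RGD0) and (RGD3).

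So: your diagnosis of where the difficulty lies is correct, but your proposed amalgam is underspecified, and the actual proof replaces it with an explicit construction of $\tau_s$ on $N_s$ plus a concrete faithful action.
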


\begin{rkintro}
	In a sense, Theorem \ref{Main result: Theorem} confirms a quote from Tits in the special case of $\FF_2$, where he claims that constructing the Borel subgroup (here:\ $U_+$) is as hard as constructing the whole RGD-system (cf.\ \cite[Ch.\ 3.4 in \emph{Buildings and group amalgamations}]{Ti13}).
\end{rkintro}

\begin{rkintro}
	The question whether an RGD-system with prescribed commutation relations exists reduces by Theorem \ref{Main result: Theorem} to the question of existence of a suitable faithful and Weyl-invariant commutator blueprint. In \cite{BiConstruction} we discussed the existence of commutator blueprints in more detail. In particular, we constructed examples of commutator blueprints and in all constructions the Weyl-invariance followed directly from the description of the commutator blueprint. We constructed the groups $U_w$ as semi-direct products $U_{w'} \rtimes \ZZ_2$ and we have worked out explicit conditions on the commutation relations ensuring they define a commutator blueprint. In general, it is hard to decide whether a given commutator blueprint if faithful. We mention here two special classes. 
	
	If $(W, S)$ is of \emph{universal} type (i.e.\ $o(st) = \infty$ for all $s\neq t \in S$), then the group $U_+$ is the tree product of the groups $U_w$. Using \cite[Ch.\ $4.4$]{Se79}, the homomorphisms $U_w \to U_+$ are injective and, in particular, every commutator blueprint of universal type is automatically faithful. Thus our main result reduces the difficult problem about the existence of RGD-systems to the existence of the finite groups $U_w$.
	
	On the other hand, if $(W, S)$ is \emph{of type} $(4, 4, 4)$, that is, $(W, S)$ is of rank $3$ and $o(st) = 4$ for all $s\neq t \in S$, then we have shown in \cite{BiDiss} that every Weyl-invariant commutator blueprint of type $(W, S)$ is faithful. Together with Theorem \ref{Main result: Theorem} this allows us to construct new examples of RGD-systems of $2$-spherical type.
\end{rkintro}

\subsection*{Commutation relations}

We now come back to our earlier question of determining how complicated the commutation relations between $U_{\alpha}$ and $U_{\beta}$ in the case $o(r_{\alpha} r_{\beta}) = \infty$ can be. We will see that in all known examples the commutation relations are "simple".

\begin{enumerate}[label=(\alph*)]
	\item Kac-Moody groups $\mathbf{G}$ form the most natural family of examples in this context, as the axioms of RGD-systems are motivated by the theory of Kac-Moody groups. For a field $\FF$, the root groups of $\mathbf{G}(\FF)$ are parametrized by $(\FF, +)$ and we have explicit commutation relations for prenilpotent pairs of roots. Moreover, one can show that $[U_{\alpha}, U_{\beta}] \leq U_{\alpha + \beta}$ holds (cf.\ \cite[Proposition $1$]{BP95} and \cite[Theorem $2$]{Mo87}).
	
	\item In the $2$-spherical case it is known that the commutation relation between $U_{\alpha}$ and $U_{\beta}$ is uniquely determined by the \emph{local} commutation relations, if the root groups are large enough (cf.\ \cite{AM97}). Moreover, it turned out that in the rank $3$ case they are almost always trivial (cf.\ \cite{Bi22}). This result does also hold in the \emph{simply-laced} case or if $o(st) \geq 3$ for all $s\neq t\in S$. In the \emph{right-angled} case, the situation is more complicated.
	
	\item\label{Tits} In \cite[Section $5.4$ in $95/96$]{Ti73-00} Tits has constructed uncountably many isomorphism classes of trivalent \emph{Moufang twin trees} which are essentially the same as (center-free) RGD-systems of type $\tilde{A}_1$ over $\FF_2$. In the case $\tilde{A}_1$ we have $\Phi = \{+, -\} \times \ZZ$ and all the examples constructed by Tits are of the following form, where $\epsilon \in \{+, -\}$: 
	\allowdisplaybreaks
	\begin{align*}
		&&\forall z, z' \in \ZZ: &&[U_{{\epsilon, 2z}}, U_{\epsilon, z'}] = 1 &&\text{and} &&[U_{\epsilon, 2z+1}, U_{\epsilon, 2z'+1}] \leq \langle U_{\epsilon, 2i} \mid i \in \ZZ \rangle.
	\end{align*}
	Gr\"uninger, Horn and M\"uhlherr announced in \cite{GHM16} the existence of an RGD-system of type $\tilde{A}_1$ over $\FF_2$, which has different commutation relations than those constructed by Tits. Unfortunately, for both results there are no proofs available in the literature yet. However, the existence of the trivalent Moufang twin trees constructed by Tits follows from Theorem \ref{Main result: Theorem} together with \cite[Theorem A]{BiConstruction}. Moreover, we construct independently an RGD-system of type $\tilde{A}_1$ over $\FF_2$ which has different commutation relations than those constructed by Tits. The existence follows from Theorem \ref{Main result: Theorem} and \cite[Theorem $4.8$]{BiConstruction}. Thus Theorem \ref{Main result: Theorem} provides a new approach to these already known results.
	
	\item\label{GHM} Gr\"uninger, Horn and M\"uhlherr have shown in \cite{GHM16} that in the case $\tilde{A}_1$ the commutation relations are generally very restrictive. It is a consequence of \cite[Theorem A]{GHM16} that for all $w\in W$ the group $U_w$ is nilpotent of class at most $2$, provided that all root groups are isomorphic to $(\FF_p, +)$ for a fixed prime $p$. This result was generalized in \cite{PaDiss21} and includes the cases where $U_{\alpha_s} \cong (\KK_s, +)$ with $\KK_s$ a field of characteristic different from $2$ (cf.\ also \cite{SW08}, \cite{Seg09}).
	
	\item\label{Construction Ronan-Remy} In \cite{RR06} Rémy and Ronan have constructed \emph{exotic} RGD-systems of right-angled type $(W, S)$ with prescribed isomorphism types of the root groups. More precisely, they have established the existence of RGD-systems of type $(W, S)$ with $U_{\alpha_s} \cong (\KK_s, +)$ for every prescribed family of fields $(\KK_s)_{s\in S}$ -- and hence the existence of RGD-systems of mixed characteristics of root groups. One main aspect in their construction is that root groups corresponding to prenilpotent pairs of roots commute.
	
	\item Let $(G, (U_{\alpha})_{\alpha \in \Phi})$ be an RGD-system of type $(W, S)$ in which every root group is of \emph{prime exponent}, that is, for each $\alpha \in \Phi$ there exists a prime $p_{\alpha}$ with $g^{p_{\alpha}} = 1$ for all $g\in U_{\alpha}$ (e.g.\ any example constructed in \cite{RR06}, where the fields are of positive characteristic). Then one can show
	\[ [U_{\alpha}, U_{\beta}] \leq \langle U_{\gamma} \mid \gamma \in (\alpha, \beta), \, p_{\alpha} = p_{\gamma} = p_{\beta} \rangle. \]
	This leads to strong restrictions of the commutation relations in the case of different prime exponents (e.g.\ $p_{\alpha} \neq p_{\beta}$ implies $[U_{\alpha}, U_{\beta}] = 1$). This explains also the choice of the commutation relations in \ref{Construction Ronan-Remy} for fields having different characteristic. On the other hand, if all root groups have the same prime exponent, then the previous inclusion is just the original axiom (RGD$1$). In particular, this obstruction does not occur in RGD-systems over $\FF_2$ and allows us to construct RGD-systems with complicated commutation relations.
\end{enumerate}

\subsection*{Consequences}

In the rest of the introduction we will discuss consequences of our main result. We first discuss property (FPRS) of an RGD-system, introduced by Caprace and Rémy in \cite[$2.1$]{CR09}; we refer to loc.\ cit.\ for more information. This property makes a statement about the set of fixed points of the action of the root groups on the associated building. It implies that every root group is contained in a suitable contraction group. Property (FPRS) is used in \cite{CR09} to show that -- under some mild conditions -- the \emph{geometric completion} of an RGD-system (cf.\ \cite{RR06}) is topologically simple. Caprace and Rémy have shown in \cite{CR09} that almost all RGD-systems of $2$-spherical type with finite root groups, the exotic examples in \cite{RR06} as well as Kac-Moody groups satisfy property (FPRS).

According to \cite{CR09} it has been known that there exist RGD-systems that do not satisfy (FPRS) and we refer to Remark \ref{Remark: FPRS} for more information. The following corollary provides the existence of such RGD-systems in each rank (Corollary \ref{Corollary: 2^N examples}):

\begin{corintro}
	For each universal Coxeter system $(W, S)$ of rank at least $2$ there exists an RGD-system of type $(W, S)$ over $\FF_2$ which does not satisfy property (FPRS).
\end{corintro}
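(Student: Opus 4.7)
The plan is to exhibit, for each universal Coxeter system $(W, S)$ of rank $\geq 2$, a Weyl-invariant commutator blueprint $\mathcal{M}$ whose associated group $U_+$ contains a root group $U_\alpha$ that does \emph{not} fix pointwise any sufficiently deep sub-half-apartment of $\alpha$; then integrability (and hence the existence of the desired RGD-system) will follow directly from Theorem~\ref{Main result: Theorem}. The key observation is that, by Remark~2 of the introduction, every commutator blueprint over a universal Coxeter system is automatically faithful, since $U_+$ is the tree product of the finite groups $U_w$ and the inclusions $U_w \hookrightarrow U_+$ are injective by Serre's theory of groups acting on trees. Consequently, the only hypotheses to verify when applying Theorem~\ref{Main result: Theorem} are Weyl-invariance and the blueprint axioms themselves.

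To produce $\mathcal{M}$, I would fix a simple root $\alpha_s \in \Phi$ and select an infinite family of positive roots $\beta_1, \beta_2, \ldots$ with $\beta_k \subsetneq \alpha_s$, with $\{\alpha_s, \beta_k\}$ prenilpotent, and with the depth of $\beta_k$ inside $\alpha_s$ tending to infinity (such a family exists in any universal type of rank $\geq 2$, because the set of positive roots contained in $\alpha_s$ is infinite). For each $k$, I would prescribe a non-trivial commutation relation $[U_{\alpha_s}, U_{\beta_k}]$ lying in $\langle U_\gamma \mid \gamma \in (\alpha_s, \beta_k)\rangle$ (so that (RGD$1$) is respected), and then propagate the relations across the entire $W$-orbit of each prenilpotent pair so as to achieve Weyl-invariance. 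The combinatorial verification that this data assembles into a valid commutator blueprint can be carried out using the construction machinery developed in \cite{BiConstruction} (in particular the semi-direct product description $U_w = U_{w'} \rtimes \ZZ_2$ together with the explicit sufficient conditions worked out there).

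Once the blueprint $\mathcal{M}$ is in place, Theorem~\ref{Main result: Theorem} yields an RGD-system $\bigl(G, (U_\alpha)_{\alpha \in \Phi}\bigr)$ over $\FF_2$ realizing $\mathcal{M}$. Failure of (FPRS) is then immediate from the construction: property (FPRS) requires that for each root $\alpha$ there exists $n \in \NN$ such that $U_\alpha$ centralizes every $U_\beta$ with $\beta \subset \alpha$ of depth at least $n$ (equivalently, that $U_\alpha$ fixes pointwise a deep enough sub-half-apartment of $\alpha$). By design, the roots $\beta_k$ witness $[U_{\alpha_s}, U_{\beta_k}] \neq 1$ for arbitrarily large depths, contradicting this requirement and establishing the corollary.

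The principal obstacle is the second step: explicitly cooking up the commutation data so that the resulting $\mathcal{M}$ is simultaneously a well-defined commutator blueprint, Weyl-invariant, and non-degenerate at arbitrarily large depths. The universality of $(W, S)$ is essential here, since it forces every prenilpotent pair to satisfy $o(r_\alpha r_\beta) = \infty$ and hence eliminates the rigid Moufang-polygon constraints of \cite{TW02}, leaving enough flexibility to freely prescribe deep non-trivial commutators; moreover the tree-product structure of $U_+$ keeps relations propagated by Weyl-invariance from collapsing in unintended ways.
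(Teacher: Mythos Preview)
Your overall strategy---build a Weyl-invariant commutator blueprint of universal type, invoke automatic faithfulness via the tree-product structure, and then apply Theorem~\ref{Main result: Theorem}---is exactly what the paper does, and your reference to the constructions in \cite{BiConstruction} is on target. The paper in fact uses a specific blueprint $\mathcal{M}(\NN,(J_n)_{n\in\NN},s,t)$ from \cite[Theorem~4.6]{BiConstruction}, so in that respect you have correctly identified the skeleton of the argument.

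However, your treatment of (FPRS) contains a genuine gap. Your paraphrase---``for each root $\alpha$ there exists $n$ such that $U_\alpha$ centralizes every $U_\beta$ with $\beta\subset\alpha$ of depth at least $n$''---is not what (FPRS) says; (FPRS) is a statement about the radius of the fixed-point set of $U_\gamma$ in the \emph{building} as $d(c_+,-\gamma)\to\infty$, not about centralizing deep root groups or fixing sub-half-apartments of the Coxeter complex (the latter is automatic for any root group). Consequently, merely arranging $[U_{\alpha_s},U_{\beta_k}]\neq 1$ for arbitrarily deep $\beta_k$ does \emph{not} by itself contradict (FPRS): if (FPRS) held, then for large $k$ the group $U_{\beta_k}$ would fix $B(c_+,2)$ pointwise, and since $u_{\alpha_s}$ stabilizes $B(c_+,2)$ setwise, the commutator $[u_{\alpha_s},u_{\beta_k}]$ would also fix $B(c_+,2)$ pointwise. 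There is no contradiction unless you know that this commutator actually moves some chamber in $B(c_+,2)$---and an arbitrary non-trivial element of $\langle U_\gamma\mid\gamma\in(\alpha_s,\beta_k)\rangle$ need not do so.

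The paper closes this gap by a careful choice of the commutator: it prescribes $[u_{\alpha_1},u_{\alpha_{2i-1}}]=\prod_{j\in J_i}u_{\alpha_{2j}}$ with $1\in J_i$ for every $i$, so that the factor $u_{\alpha_2}$ is always present. Since $u_{\alpha_2}$ does not fix the chamber $st\in B(c_+,2)$, while all other factors $u_{\alpha_{2j}}$ with $j\geq 2$ do, the commutator genuinely moves a chamber at distance~$2$ from $c_+$, and the contradiction with (FPRS) is obtained. Your proposal needs exactly this extra ingredient: the non-trivial commutator must be forced to contain a \emph{shallow} root-group factor, not just be non-trivial.
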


In \cite{CRW17b} Caprace, Reid and Willis initiated a systematic study of the class $\mathcal{S}$ consisting of non-discrete, compactly generated, topologically simple, totally disconnected, locally compact groups. As we have mentioned before, property (FPRS) implies that the geometric completion of an RGD-system with finite root groups belongs to the class $\mathcal{S}$. In general, it is a difficult problem to construct new (families of) examples of groups in $\mathcal{S}$. However, if there exists a constant $C \geq 0$ such that for all prenilpotent pairs $\{ \alpha, \beta \}$ of roots with $[U_{\alpha}, U_{\beta}] \neq 1$ the distance between the corresponding walls is at most $C$, it follows similarly as in \cite[Lemma $5$]{CR09} that property (FPRS) is satisfied. Using Theorem \ref{Main result: Theorem}, we can then produce many new examples of groups in $\mathcal{S}$.

The second application concerns the nilpotency class of the groups $U_w$ in RGD-systems ($U_w$ is nilpotent if the root groups are nilpotent). The nilpotency class of the groups $U_w$ in the case of type $\tilde{A}_1$ is at most $2$ (cf.\ \ref{GHM} above). Thanks to a result of Gl\"ockner and Willis about contraction groups, this result generalizes to all types as follows: If (FPRS) is satisfied and if all root groups have cardinality $p$ for a fixed prime $p$, then it follows from \cite[Theorem A]{GW21} that the nilpotency class of the group $U_{w^k}$ for $k \in \NN$ is bounded above by $\ell(w)$, if the element $w$ is \emph{straight} (i.e.\ if $\ell(w^k) = \vert k \vert \ell(w)$ for all $k \in \ZZ$).

In \cite[Theorem $1.2$]{Ca07} Caprace has proved that the nilpotency class of the groups $U_w$ in Kac-Moody groups of arbitrary type is bounded above by a constant only depending on the generalized Cartan matrix $A$ and not on $w$. We will see that the general situation is very different and the results about Kac-Moody groups do not generalize to arbitrary RGD-systems. Even more, we can construct for each $m\geq 3$ an example of an RGD-system of rank $m$ such that the nilpotency class of the groups $U_w$ can be arbitrarily large. To make the statement precise, for an RGD-system $\mathcal{D}$ we define $\mathrm{ndeg}(\mathcal{D})$ to be the supremum of the nilpotency classes of the subgroups $U_w$ for all $w\in W$. The following result follows from Theorem \ref{Main result: Theorem} together with \cite[Theorem B$\&$C]{BiConstruction}.

\begin{corintro}\label{Main result: Cor nilpotency class n}
	Let $(W, S)$ be a universal Coxeter system of rank $m \geq 3$.
	\begin{enumerate}[label=(\alph*)]
		\item For each $n \in \NN$ there is an RGD-system $\mathcal{D}_n$ of type $(W, S)$ with $\mathrm{ndeg}(\mathcal{D}_n) = n$.
		
		\item There exists an RGD-system $\mathcal{D}$ of type $(W, S)$ with $\mathrm{ndeg}(\mathcal{D}) = \infty$.
	\end{enumerate}
\end{corintro}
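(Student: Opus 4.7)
The plan is to reduce both assertions to the construction of suitable commutator blueprints and then invoke the construction results of \cite{BiConstruction}. By Theorem \ref{Main result: Theorem}, in order to exhibit an RGD-system of type $(W,S)$ over $\FF_2$ with prescribed positive commutation relations, it suffices to produce a commutator blueprint $\mathcal{M}$ that is both faithful and Weyl-invariant. In the universal case the remark following Theorem \ref{Main result: Theorem} in the introduction shows that faithfulness is automatic: $U_+$ is the tree product of the finite groups $U_w$, and Serre's results on amalgams imply injectivity of every canonical map $U_w \to U_+$. Hence the whole problem collapses to producing Weyl-invariant commutator blueprints whose attached groups $U_w$ realise the required nilpotency behaviour.

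For part (a) I would feed into Theorem \ref{Main result: Theorem} a Weyl-invariant commutator blueprint of type $(W,S)$ attaining the exact nilpotency datum $\sup_w \mathrm{class}(U_w) = n$; such a blueprint is provided by \cite[Theorem B]{BiConstruction}. The resulting RGD-system is the desired $\mathcal{D}_n$. For part (b), \cite[Theorem C]{BiConstruction} supplies a Weyl-invariant blueprint whose groups $U_w$ attain arbitrarily large nilpotency class, and the same argument yields an RGD-system $\mathcal{D}$ with $\mathrm{ndeg}(\mathcal{D}) = \infty$.

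To transfer the blueprint-level statement to the group-theoretic conclusion I need the identification of the $U_w$ built from $\mathcal{M}$ with the positive subgroup $U_w \leq G$ inside the ambient RGD-system. This is precisely what faithfulness of $\mathcal{M}$ guarantees, together with the construction of $G$ from $U_+$ underlying Theorem \ref{Main result: Theorem}: under these conditions $U_w$ embeds into $U_+ \leq G$ as the expected subgroup, so the blueprint nilpotency class equals $\mathrm{ndeg}(\mathcal{D}_n)$, respectively $\mathrm{ndeg}(\mathcal{D})$.

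All genuine work is absorbed into Theorem \ref{Main result: Theorem} and the construction theorems in \cite{BiConstruction}; given both, the corollary itself is largely formal. The only steps requiring care are verifying that the constructions in \cite{BiConstruction} are available in every rank $m \geq 3$ rather than just the minimal rank, and confirming that the nilpotency class recorded on the combinatorial side matches the class witnessed inside the ambient group. Both points should follow directly from the combinatorial specification of the blueprints in \cite{BiConstruction} and the faithful embedding $U_w \hookrightarrow G$, so I do not expect a serious obstacle beyond bookkeeping.
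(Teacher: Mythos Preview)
Your proposal is correct and matches the paper's approach exactly: the paper simply states that the corollary follows from Theorem~\ref{Main result: Theorem} together with \cite[Theorem B\&C]{BiConstruction}, and your write-up spells out precisely this reduction (including the automatic faithfulness in the universal case via the tree-product argument already noted in the introduction).
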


\subsection*{Overview}

We sketch here the proof strategy of Theorem \ref{Main result: Theorem} to help the reader get a rough overview of the structure of this article. Before we start let us mention that Section \ref{sec:Premilinaries} is devoted to fixing notation and in Section \ref{Section: commutator blueprints} we introduce the notion of commutator blueprints and prove elementary facts about them.

Let $\mathcal{M}$ be a faithful and Weyl-invariant commutator blueprint of type $(W, S)$ and let $s\in S$. We will construct in Section \ref{Section: Constructing the rank 1 parabolics} the \emph{rank $1$ parabolics} $P_s$ as follows: We first observe that we can decompose the group $U_+$ into a semi-direct product $U_+ \cong U_s \ltimes N_s$. Proposition \ref{Proposition: Existence of taus} and Corollary \ref{ustaus} then imply the existence of an automorphism $\tau_s \in \Aut(N_s)$ with $\tau_s(U_{\alpha}) = U_{s\alpha}$. Moreover, we have $\tau_s^2 = 1 = (u_s \tau_s)^3$ in $\Aut(N_s)$ where $u_s \in U_s$ denotes the non-trivial element. We define $P_s := \Sym(3) \ltimes N_s$, where $\Sym(3) = \langle u_s, \tau_s \rangle$. Next, we will construct an RGD-system containing the groups $P_s$ as subgroups. In order to do that, we introduce in Section \ref{Section: Action of Ps} a chamber system $\Cbf$ and show that all $P_s$ act on $\Cbf$. It turns out that this action is faithful (cf.\ Proposition \ref{Proposition: Ps acts on C}) and that the braid relations $(\tau_s \tau_t)^{o(st)}$ act trivially on $\Cbf$ (Theorem \ref{Theorem: braid relations act trivial}). We define $G$ to be the direct limit of the inductive system formed by the groups $U_+, (P_s)_{s\in S}, (\langle \tau_s \rangle)_{s\in S}, W \cong \langle \tau_s \mid s\in S \rangle$. Note that $G$ acts non-trivially on the chamber system $\Cbf$. There is a canonical way of defining root groups $U_{\alpha}$ inside $G$ for all $\alpha \in \Phi$ as conjugates of the root groups corresponding to simple roots and we put $\mathcal{D}_{\mathcal{M}} := (G, (U_{\alpha})_{\alpha \in \Phi})$. Using the action of $G$ on $\Cbf$ we observe in Theorem \ref{RGDsystem} that $\mathcal{D}_{\mathcal{M}}$ is an RGD-system and $\mathcal{M}$ is integrable.

\subsection*{Acknowledgement}

I am very grateful to Bernhard M\"uhlherr for proposing this project to me. Moreover, I would like to thank him for many helpful discussions on the topic as well as for valuable comments which improved the presentation of the present results. I also thank Timothée Marquis, Colin Reid and George Willis for valuable remarks on an earlier draft.

\section{Preliminaries}\label{sec:Premilinaries}

\subsection*{Coxeter systems}

Let $(W, S)$ be a Coxeter system and let $\ell$ denote the corresponding length function. For $s, t \in S$ we denote the order of $st$ in $W$ by $m_{st}$. The \emph{Coxeter diagram} corresponding to $(W, S)$ is the labeled graph $(S, E(S))$, where $E(S) = \{ \{s, t \} \mid m_{st}>2 \}$ and where each edge $\{s,t\}$ is labeled by $m_{st}$ for all $s, t \in S$. The \emph{rank} of the Coxeter system is the cardinality of the set $S$.

It is well-known that for each $J \subseteq S$ the pair $(\langle J \rangle, J)$ is a Coxeter system (cf.\ \cite[Ch. IV, §$1$ Theorem $2$]{Bo68}). A subset $J \subseteq S$ is called \emph{spherical} if $\langle J \rangle$ is finite. The Coxeter system is called \emph{spherical} if $S$ is spherical. Given a spherical subset $J$ of $S$, there exists a unique element of maximal length in $\langle J \rangle$, which we denote by $r_J$ (cf.\ \cite[Corollary $2.19$]{AB08}).

\begin{lemma}\label{conditionF}
	Let $\epsilon \in \{+, -\}$ and let $(W, S)$ be a Coxeter system. Suppose $s, t \in S$ and $w\in W$ with $\ell(sw) = \ell(w) \epsilon 1 = \ell(wt)$. Then either $\ell(swt) = \ell(w) \epsilon 2$ or else $swt = w$.
\end{lemma}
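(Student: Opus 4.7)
My plan is to deduce the lemma from the Strong Exchange Condition for Coxeter systems (as in \cite[Theorem $2.15$]{AB08}). The starting observation is purely length-counting: since left multiplication by $s$ and right multiplication by $t$ each change the length by exactly $\pm 1$, we have $\ell(swt) = \ell(wt) \pm 1 \in \{\ell(w), \ell(w) \epsilon 2\}$. If $\ell(swt) = \ell(w)\epsilon 2$, the first alternative of the conclusion holds and there is nothing to prove. The entire content of the lemma is therefore the claim that, in the remaining case $\ell(swt) = \ell(w)$, one must have $swt = w$. I will handle the two signs separately, as the natural choice of reduced expression to feed to strong exchange is different in each.

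Assume $\epsilon = +$, so $\ell(sw) = \ell(w)+1 = \ell(wt)$, and suppose $\ell(swt) = \ell(w) =: k$. Fix any reduced expression $w = s_1 \cdots s_k$; then $s_1 \cdots s_k \, t$ is a reduced expression for $wt$ of length $k+1$. Because $\ell(s \cdot wt) = k < k+1 = \ell(wt)$, the strong exchange condition applied to the reflection $s$ and this expression yields an index $i$ such that $swt$ is obtained from $s_1 \cdots s_k t$ by deleting the $i$-th letter. If the deleted letter is the trailing $t$, then $swt = s_1 \cdots s_k = w$ and we are done. Otherwise the deletion is internal, giving $swt = s_1 \cdots \widehat{s_i} \cdots s_k \cdot t$, hence (right-multiplying by $t$) $sw = s_1 \cdots \widehat{s_i} \cdots s_k$; this is a word of length $k-1$, contradicting $\ell(sw) = k+1$.

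For $\epsilon = -$, suppose $\ell(sw) = \ell(w)-1 = \ell(wt)$ and $\ell(swt) = \ell(w) =: k$. Since $\ell(sw) < \ell(w)$, some reduced expression of $w$ starts with $s$: write $w = s\, s_1 \cdots s_{k-1}$. Now apply strong exchange on the right, using $\ell(wt) < \ell(w)$ and this reduced expression: there is an index $i \in \{0, 1, \dots, k-1\}$ (with $s_0 := s$) such that $wt = s_0 \cdots \widehat{s_i} \cdots s_{k-1}$. If $i = 0$, then $wt = s_1 \cdots s_{k-1} = sw$, so $swt = s\cdot sw = w$, as required. If $i \geq 1$, then $swt = s_1 \cdots \widehat{s_i} \cdots s_{k-1}$, a word of length $k-2$, so $\ell(swt) \leq k-2 < k$, contradicting our assumption.

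I do not anticipate a serious obstacle: the argument is essentially just strong exchange plus a one-line case check in each parity. The only small care needed is to pick the right reduced expression on which to pivot (a reduced expression of $wt$ ending in $t$ for $\epsilon = +$, and a reduced expression of $w$ beginning with $s$ for $\epsilon = -$), so that the ``good'' case of the deletion directly produces the identity $swt = w$ while the ``bad'' case collides with the hypothesis $\ell(sw) = \ell(w) \epsilon 1$.
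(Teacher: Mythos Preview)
Your proof is correct. The paper itself does not give an argument: it simply cites \cite[Condition~($\mathbf{F}$) on p.~79]{AB08} for the case $\epsilon = +$ and remarks that $\epsilon = -$ can be reduced to it (for instance, by setting $v := wt$, so that $\ell(sv), \ell(vt) \in \{\ell(v)+1, \ell(v)-1\}$ and the $+$ case applies to $v$). Your approach is instead a direct, self-contained derivation of both signs from the (strong) exchange condition, with the reduced expression chosen so that the ``good'' deletion yields $swt = w$ while the ``bad'' one contradicts the length hypothesis. This is essentially the standard proof of Condition~($\mathbf{F}$) in the literature, so the two routes agree in substance; yours has the advantage of not relying on an external reference, while the paper's reduction of $\epsilon = -$ to $\epsilon = +$ avoids repeating the exchange argument a second time.
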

\begin{proof}
	The case $\epsilon = +$ is \cite[Condition ($\mathbf{F}$) on p. $79$]{AB08}. The case $\epsilon = -$ can be deduced from the case $\epsilon = +$.
\end{proof}

\subsection*{Buildings}

Let $(W, S)$ be a Coxeter system. A \emph{building of type $(W, S)$} is a pair $\Delta = (\C, \delta)$ where $\C$ is a non-empty set and where $\delta: \C \times \C \to W$ is a \emph{distance function} satisfying the following axioms, where $x, y\in \C$ and $w = \delta(x, y)$:
\begin{enumerate}[label=(Bu\arabic*)]
	\item $w = 1_W$ if and only if $x=y$;
	
	\item if $z\in \C$ satisfies $s := \delta(y, z) \in S$, then $\delta(x, z) \in \{w, ws\}$, and if, furthermore, $\ell(ws) = \ell(w) +1$, then $\delta(x, z) = ws$;
	
	\item if $s\in S$, there exists $z\in \C$ such that $\delta(y, z) = s$ and $\delta(x, z) = ws$.
\end{enumerate}
The \emph{rank} of $\Delta$ is the rank of the underlying Coxeter system. The elements of $\C$ are called \emph{chambers}. Given $s\in S$ and $x, y \in \C$, then $x$ is called \emph{$s$-adjacent} to $y$, if $\delta(x, y) = s$. The chambers $x, y$ are called \emph{adjacent}, if they are $s$-adjacent for some $s\in S$. A \emph{gallery} from $x$ to $y$ is a sequence $(x = x_0, \ldots, x_k = y)$ such that $x_{l-1}$ and $x_l$ are adjacent for all $1 \leq l \leq k$; the number $k$ is called the \emph{length} of the gallery. Let $(x_0, \ldots, x_k)$ be a gallery and suppose $s_i \in S$ with $\delta(x_{i-1}, x_i) = s_i$. Then $(s_1, \ldots, s_k)$ is called the \emph{type} of the gallery. A gallery from $x$ to $y$ of length $k$ is called \emph{minimal} if there is no gallery from $x$ to $y$ of length $<k$.

Given a subset $J \subseteq S$ and $x\in \C$, the \emph{$J$-residue} of $x$ is the set $R_J(x) := \{y \in \C \mid \delta(x, y) \in \langle J \rangle \}$. Each $J$-residue is a building of type $(\langle J \rangle, J)$ with the distance function induced by $\delta$ (cf.\ \cite[Corollary $5.30$]{AB08}). A \emph{residue} is a subset $R$ of $\C$ such that there exist $J \subseteq S$ and $x\in \C$ with $R = R_J(x)$. Since the subset $J$ is uniquely determined by $R$, the set $J$ is called the \emph{type} of $R$ and the \emph{rank} of $R$ is defined to be the cardinality of $J$. A residue is called \emph{spherical} if its type is a spherical subset of $S$. Let $R$ be a spherical $J$-residue. Then $x, y \in R$ are called \emph{opposite in $R$} if $\delta(x, y) = r_J$. A \emph{panel} is a residue of rank $1$. The building $\Delta$ is called \emph{thick}, if each panel of $\Delta$ contains at least three chambers. A building is called \emph{spherical} if its type is spherical.

An \emph{(type-preserving) automorphism} of a building $\Delta = (\C, \delta)$ is a bijection $\phi:\C \to \C$ such that $\delta(\phi(c), \phi(d)) = \delta(c, d)$ holds for all chambers $c, d \in \C$. We remark that some authors distinguish between automorphisms and type-preserving automorphisms. An automorphism in our sense is type-preserving. We denote the set of all automorphisms of the building $\Delta$ by $\Aut(\Delta)$.

\begin{example}
	We define $\delta: W \times W \to W, (x, y) \mapsto x^{-1}y$. Then $\Sigma(W, S) := (W, \delta)$ is a building of type $(W, S)$. The group $W$ acts faithful on $\Sigma(W, S)$ by multiplication from the left, i.e.\ $W \leq \Aut(\Sigma(W, S))$.
\end{example}

\begin{theorem}\label{Theorem5.205AB08}
	Let $\Delta = (\C, \delta)$ be a thick spherical building of type $(W, S)$ and let $c, d \in \C$ be opposite chambers in $\C$. Then the only automorphism of $\Delta$, which fixes $\bigcup_{s\in S} R_{\{s\}}(c) \cup \{d\}$ pointwise, is the identity.
\end{theorem}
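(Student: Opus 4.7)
The plan is to prove the theorem by downward induction on $k := \ell(\delta(c,e))$, from $k = \ell(r_S)$ down to $k = 0$. The inductive step is clean: assume $\phi$ fixes every chamber $e'$ with $\ell(\delta(c,e')) > k$, and let $e$ satisfy $\ell(\delta(c,e)) = k < \ell(r_S)$. Writing $w := \delta(c,e)$, since $w \neq r_S$ there exists $s \in S$ with $\ell(ws) > \ell(w)$, and by (Bu3) some $e' \in R_{\{s\}}(e)$ has $\delta(c,e') = ws$. The induction hypothesis yields $\phi(e') = e'$, hence $\phi(e) \in R_{\{s\}}(e') = R_{\{s\}}(e)$. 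By (Bu2) every chamber of this panel lies at distance $w$ or $ws$ from $c$, and $e$ is the unique one at the shorter distance $w$. Since $\phi$ preserves $\delta(c,-)$, we conclude $\phi(e) = e$.

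The main obstacle is the base case: showing $\phi(e) = e$ for every $e$ with $\delta(c,e) = r_S$. Here both the hypothesis $\phi(d) = d$ and the pointwise fixing of the panels on $c$ must be exploited. I would first use the fact that in a thick spherical building two opposite chambers lie in a unique common apartment $\Sigma$; since $\phi$ fixes $c$ and $d$, it preserves $\Sigma$, and being type-preserving while fixing the base chamber $c$ its restriction to $\Sigma \cong \Sigma(W,S)$ must be the identity (type-preserving automorphisms of the Coxeter complex form $W$, acting freely on chambers). Thus $\phi$ fixes $\Sigma$ pointwise. However, only $d$ among the chambers of $\Sigma$ is opposite $c$, so this step does not directly handle the other chambers opposite $c$.

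For a chamber $e$ opposite $c$ with $e \notin \Sigma$, the plan is to combine the $\phi$-invariance of the projections $\proj_{R_{\{s\}}(c)}(e)$ (which land in the already-fixed panels on $c$) with the invariance of $d$ to force $\phi(e) = e$. This rigidity for chambers opposite $c$ is the technical heart of the proof; in rank $2$ it amounts to the classical statement that a type-preserving automorphism of a thick generalized polygon fixing a chamber, its entire star, and one opposite chamber is the identity (in the $A_2$ case, for instance, such an automorphism is an elation of the underlying projective plane, uniquely determined by the image of a single off-axis chamber). In higher rank one reduces to rank $2$ via suitable rank-$2$ residues and the theory of projectivities between panels of opposite chambers. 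Once this rigidity on the set of chambers opposite $c$ is in hand, the downward induction of the first paragraph propagates fixedness to every chamber of $\C$.
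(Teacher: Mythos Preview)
The paper does not prove this statement; it simply cites \cite[Theorem~5.205]{AB08}. So there is no in-paper argument to compare your attempt against beyond that reference.

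Your downward induction on $\ell(\delta(c,e))$ is correctly set up and the inductive step is valid; you also correctly argue that $\phi$ fixes the apartment $\Sigma$ through $c$ and $d$ pointwise. The genuine gap is the base case, and you say so yourself: for a chamber $e$ opposite $c$ with $e\neq d$ you only announce a ``plan'' involving projections and rank-$2$ rigidity, without carrying anything out. Since (as your own $A_2$ remark confirms) the projections of $e$ to the panels on $c$ do \emph{not} determine $e$, the hypothesis $\phi(d)=d$ must do real work here, and as written the proposal is an outline with its hardest lemma left open.

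The missing mechanism is the following. If $\phi$ fixes $\bigcup_{s\in S} R_{\{s\}}(x)$ pointwise and $y$ is opposite $x$ with $\phi(y)=y$, then $\phi$ fixes $\bigcup_{s\in S} R_{\{s\}}(y)$ pointwise: for each panel $Q$ on $y$ there is a panel $P$ on $x$ such that $\proj_P\vert_Q\colon Q\to P$ is a bijection, and since $\phi$ stabilises $Q$, fixes $P$ pointwise, and commutes with $\proj_P$, it must fix $Q$ pointwise. Applying this with $(x,y)=(c,d)$ shows that $\phi$ fixes every panel on $d$; one then propagates along an arbitrary gallery from $d$, at each step passing through a chamber of $\Sigma$ opposite the current chamber (such a chamber exists and is $\phi$-fixed) and using thickness to bounce back. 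Once you insert this lemma and carry out the propagation, your scheme becomes a complete proof along the lines of the cited reference.
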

\begin{proof}
	This is \cite[Theorem $5.205$]{AB08}.
\end{proof}

\subsection*{Roots}

Let $(W, S)$ be a Coxeter system. A \emph{reflection} is an element of $W$ that is conjugate to an element of $S$. For $s\in S$ we let $\alpha_s := \{ w\in W \mid \ell(sw) > \ell(w) \}$ be the \emph{simple root} corresponding to $s$. A \emph{root} is a subset $\alpha \subseteq W$ such that $\alpha = v\alpha_s$ for some $v\in W$ and $s\in S$. We denote the set of all roots by $\Phi(W, S)$. The set $\Phi(W, S)_+ := \{ \alpha \in \Phi(W, S) \mid 1_W \in \alpha \}$ is the set of all \emph{positive roots} and $\Phi(W, S)_- := \{ \alpha \in \Phi(W, S) \mid 1_W \notin \alpha \}$ is the set of all \emph{negative roots}. For each root $\alpha \in \Phi(W, S)$ we denote its \emph{opposite} root by $-\alpha$ and we denote the unique reflection which interchanges these two roots by $r_{\alpha} \in W \leq \Aut(\Sigma(W, S))$. A pair $\{ \alpha, \beta \}$ of roots is called \emph{prenilpotent} if both $\alpha \cap \beta$ and $(-\alpha) \cap (-\beta)$ are non-empty sets. For such a pair we will write $\left[ \alpha, \beta \right] := \{ \gamma \in \Phi(W, S) \mid \alpha \cap \beta \subseteq \gamma \text{ and } (-\alpha) \cap (-\beta) \subseteq -\gamma \}$ and $(\alpha, \beta) := \left[ \alpha, \beta \right] \backslash \{ \alpha, \beta \}$.

\begin{convention}
	For the rest of this paper we let $(W, S)$ be a Coxeter system of finite rank and we define $\Phi := \Phi(W, S)$ (resp.\ $\Phi_+, \Phi_-$). 
\end{convention}

\subsection*{Coxeter buildings}

In this subsection we consider the Coxeter building $\Sigma(W, S)$. For $\alpha \in \Phi$ we denote by $\partial \alpha$ (resp.\ $\partial^2 \alpha$) the set of all panels (resp.\ spherical residues of rank $2$) stabilized by $r_{\alpha}$. The set $\partial \alpha$ is called the \emph{wall} associated with $\alpha$. Let $G = (c_0, \ldots, c_k)$ be a gallery. We say that $G$ \emph{crosses the wall $\partial \alpha$} if there exists $1 \leq i \leq k$ such that $\{ c_{i-1}, c_i \} \in \partial \alpha$. It is a basic fact that a minimal gallery crosses a wall at most once (cf.\ \cite[Lemma $3.69$]{AB08}). Let $(c_0, \ldots, c_k)$ and $(d_0 = c_0, \ldots, d_k = c_k)$ be two minimal galleries from $c_0$ to $c_k$ and let $\alpha \in \Phi$. Then $\partial \alpha$ is crossed by the minimal gallery $(c_0, \ldots, c_k)$ if and only if it is crossed by the minimal gallery $(d_0, \ldots, d_k)$. Moreover, a gallery which crosses each wall at most once is already minimal. For $\alpha_1, \ldots, \alpha_k \in \Phi$ we say that a minimal gallery $G = (c_0, \ldots, c_k)$ \emph{crosses the sequence of roots} $(\alpha_1, \ldots, \alpha_k)$, if $c_{i-1} \in \alpha_i$ and $c_i \notin \alpha_i$ for all $1\leq i \leq k$.

We denote the set of all minimal galleries $(c_0 = 1_W, \ldots, c_k)$ starting at $1_W$ by $\mathrm{Min}$. For $w\in W$ we denote the set of all $G \in \mathrm{Min}$ of type $(s_1, \ldots, s_k)$ with $w = s_1 \cdots s_k$ by $\mathrm{Min}(w)$. For $w\in W$ with $\ell(sw) = \ell(w) -1$ we let $\mathrm{Min}_s(w)$ be the set of all $G \in \mathrm{Min}(w)$ of type $(s, s_2, \ldots, s_k)$. We extend this notion to the case $\ell(sw) = \ell(w) +1$ by defining $\mathrm{Min}_s(w) := \mathrm{Min}(w)$. Let $w\in W, s\in S$ and $G = (c_0, \ldots, c_k) \in \mathrm{Min}_s(w)$. If $\ell(sw) = \ell(w) -1$, then $c_1 = s$ and we define $sG := (sc_1 = 1_W, \ldots, sc_k) \in \mathrm{Min}(sw)$. If $\ell(sw) = \ell(w) +1$, we define $sG := (1_W, sc_0 = s, \ldots, sc_k) \in \mathrm{Min}(sw)$.

\subsection*{Root group data}

An \emph{RGD-system of type $(W, S)$} is a pair $\mathcal{D} = \left( G, \left( U_{\alpha} \right)_{\alpha \in \Phi}\right)$ consisting of a group $G$ together with a family of subgroups $U_{\alpha}$ (called \emph{root groups}) indexed by the set of roots $\Phi$, which satisfies the following axioms, where $H := \bigcap_{\alpha \in \Phi} N_G(U_{\alpha})$ and $U_{\epsilon} := \langle U_{\alpha} \mid \alpha \in \Phi_{\epsilon} \rangle$ for $\epsilon \in \{+, -\}$:
\begin{enumerate}[label=(RGD\arabic*)] \setcounter{enumi}{-1}
	\item For each $\alpha \in \Phi$, we have $U_{\alpha} \neq \{1\}$.
	
	\item For each prenilpotent pair $\{ \alpha, \beta \} \subseteq \Phi$ with $\alpha \neq \beta$, the commutator group $[U_{\alpha}, U_{\beta}]$ is contained in the group $U_{(\alpha, \beta)} := \langle U_{\gamma} \mid \gamma \in (\alpha, \beta) \rangle$.
	
	\item For each $s\in S$ and each $u\in U_{\alpha_s} \backslash \{1\}$, there exist $u', u'' \in U_{-\alpha_s}$ such that the product $m(u) := u' u u''$ conjugates $U_{\beta}$ onto $U_{s\beta}$ for each $\beta \in \Phi$.
	
	\item For each $s\in S$, the group $U_{-\alpha_s}$ is not contained in $U_+$.
	
	\item $G = H \langle U_{\alpha} \mid \alpha \in \Phi \rangle$.
\end{enumerate}
For $w\in W$ we define $U_w := \langle U_{\alpha} \mid w \notin \alpha \in \Phi_+ \rangle$. Let $G \in \mathrm{Min}(w)$ and let $(\alpha_1, \ldots, \alpha_k)$ be the sequence of roots crossed by $G$. Then we have $U_w = U_{\alpha_1} \cdots U_{\alpha_k}$. Following \cite[Remark $(1)$ on p. $258$]{Ti92} we have $m_{st} \in \{2, 3, 4, 6, 8, \infty \}$ for all $s \neq t \in S$. An RGD-system $\mathcal{D} = (G, (U_{\alpha})_{\alpha \in \Phi})$ is said to be \emph{over $\FF_2$} if every root group has cardinality $2$.

\begin{example}\label{exprank2rgd}
	Let $(W, S)$ be spherical and of rank $2$ and let $\mathcal{D} = (G, (U_{\alpha})_{\alpha \in \Phi})$ be an RGD-system of type $(W, S)$ over $\FF_2$. For $S = \{s, t\}$ we deduce $m_{st} \in \{2, 3, 4, 6 \}$, since in an octagon there exists a root group of cardinality at least $4$ (cf.\ \cite[$16.9$ and $17.7$]{TW02}). Let $G \in \mathrm{Min}(r_S)$ and let $(\beta_1, \ldots, \beta_m)$ be the sequence of roots crossed by $G$, where $m = m_{st}$. Then $\Phi_+ = \{ \beta_1, \ldots, \beta_m \}$ and $\beta_1, \beta_m$ are the two simple roots. We let $U_{\beta_i} = \langle u_i \rangle$. For all $1 \leq i < j \leq m$ we will define subsets $M_{\{\beta_i, \beta_j\}} \subseteq (\beta_i, \beta_j)$ which correspond to the commutation relations. If $[u_i, u_j] = 1$, we put $M_{\{\beta_i, \beta_j\}} := \emptyset$. We now state all non-trivial commutation relations depending on the type $(W, S)$ (cf.\ \cite[Ch. $16, 17$]{TW02}):
	\begin{enumerate}
		\item[$A_1 \times A_1$:] There are no non-trivial commutation relations.
		
		\item[$A_2$:] There is only one non-trivial commutation relation, namely $[u_1, u_3] = u_2$ (cf.\ \cite[$16.1, 17.2$]{TW02}). We define $M_{\{\beta_1, \beta_3\}} = \{ \beta_2 \}$.
		
		\item[$B_2 = C_2$:] As in the case of $A_2$ there is only one non-trivial commutation relation, namely $[u_1, u_4] = u_2 u_3$ (cf.\ \cite[$16.2, 17.4$]{TW02} and \cite[$5.2.3$]{PT84}). We define $M_{\{\beta_1, \beta_4\}} := \{ \beta_2, \beta_3 \}$.
		
		\item[$G_2$:] We have the following non-trivial commutation relations (cf.\ \cite[$15.20, 16.8, 17.6$]{TW02}):
		\begin{align*}
			[u_1, u_3] = u_2, \,\, [u_3, u_5] = u_4, \,\, [u_1, u_5] = u_2 u_4, \,\, [u_2, u_6] = u_4, \,\, [u_1, u_6] = u_2 u_3 u_4 u_5
		\end{align*}
		We define $M_{\{\beta_1, \beta_3\}} := \{ \beta_2 \}, M_{\{\beta_3, \beta_5\}} := \{ \beta_4 \}, M_{\{\beta_1, \beta_5\}} := \{ \beta_2, \beta_4 \}, M_{\{\beta_2, \beta_6\}} := \{ \beta_4 \}$ and $M_{\{\beta_1, \beta_6\}} := \{ \beta_2, \beta_3, \beta_4, \beta_5 \}$.
	\end{enumerate}
	Note that for $i<j$ we have $[u_i, u_j] = \prod\nolimits_{\gamma \in M_{\{\beta_i, \beta_j\}}} u_{\gamma}$, where the order of the product is given by the order of the indices. For $i>j$ we have $[u_i, u_j] = \prod\nolimits_{\gamma \in M_{\{\beta_i, \beta_j\}}} u_{\gamma}$, where the order of the product is given by the inverse order. Thus $M_{\{\beta_i, \beta_j\}}$ contains all information about the commutators $[u_i, u_j]$ and $[u_j, u_i]$.
\end{example}

\section{Commutator blueprints}\label{Section: commutator blueprints}

In this section we will define commutator blueprints. These objects prescribe the commutation relations between prenilpotent pairs of roots. In view of Example \ref{exprank2rgd} we have a symmetry in the simple roots of the commutation relations except in the case $G_2$. To ensure that our definition is well-defined, we make the following convention:

\begin{convention}
	For the rest of this paper we assume $m_{st} \in \{2, 3, 4, 6, \infty\}$ for all $s\neq t \in S$. Moreover, we assume that every edge in the Coxeter diagram labeled with $6$ has a direction.
\end{convention}

We let $\mathcal{P}$ be the set of prenilpotent pairs of positive roots. For $w\in W$ we define $\Phi(w) := \{ \alpha \in \Phi_+ \mid w \notin \alpha \}$. Let $G = (c_0, \ldots, c_k) \in \mathrm{Min}$ and let $(\alpha_1, \ldots, \alpha_k)$ be the sequence of roots crossed by $G$. We define $\Phi(G) := \{ \alpha_i \mid 1 \leq i \leq k \}$. Using the indices we obtain an ordering $\leq_G$ on $\Phi(G)$ and, in particular, on $[\alpha, \beta] = [\beta, \alpha] \subseteq \Phi(G)$ for all $\alpha, \beta \in \Phi(G)$. Note that $\Phi(G) = \Phi(w)$ holds for every $G \in \mathrm{Min}(w)$. We abbreviate $\mathcal{I} := \{ (G, \alpha, \beta) \in \mathrm{Min} \times \Phi_+ \times \Phi_+ \mid \alpha, \beta \in \Phi(G), \alpha \leq_G \beta \}$. 

Given a family $\left(M_{\alpha, \beta}^G \right)_{(G, \alpha, \beta) \in \mathcal{I}}$, where $M_{\alpha, \beta}^G \subseteq (\alpha, \beta)$ is ordered via $\leq_G$. For $w\in W$ we define the group $U_w$ via the following presentation:
\[ U_w := \left\langle \{ u_{\alpha} \mid \alpha \in \Phi(w) \} \;\middle|\; \begin{cases*}
		\forall \alpha \in \Phi(w): u_{\alpha}^2 = 1, \\
		\forall (G, \alpha, \beta) \in \mathcal{I}, G \in \mathrm{Min}(w): [u_{\alpha}, u_{\beta}] = \prod\nolimits_{\gamma \in M_{\alpha, \beta}^G} u_{\gamma}
	\end{cases*} \right\rangle
\]
Here the product $\prod_{\gamma \in M_{\alpha, \beta}^G} u_{\gamma}$ is understood to be ordered via the ordering $\leq_G$, i.e.\ if $(G, \alpha, \beta) \in \mathcal{I}$ with $G \in \mathrm{Min}(w)$ and $M_{\alpha, \beta}^G = \{ \gamma_1 \leq_G \ldots \leq_G \gamma_k \} \subseteq (\alpha, \beta) \subseteq \Phi(G)$, then $\prod\nolimits_{\gamma \in M_{\alpha, \beta}^G} u_{\gamma} = u_{\gamma_1} \cdots u_{\gamma_k}$. Note that there could be $G, H \in \mathrm{Min}(w), \alpha, \beta \in \Phi(w)$ with $\alpha \leq_G \beta$ and $\beta \leq_H \alpha$. In this case we have two commutation relations, namely 
\begin{align*}
	&[u_{\alpha}, u_{\beta}] = \prod\nolimits_{\gamma \in M_{\alpha, \beta}^G} u_{\gamma} &&\text{and} &&[u_{\beta}, u_{\alpha}] = \prod\nolimits_{\gamma \in M_{\beta, \alpha}^H} u_{\gamma}
\end{align*}
From now on we will implicitly assume that each product $\prod\nolimits_{\gamma \in M_{\alpha, \beta}^G} u_{\gamma}$ is ordered via the ordering $\leq_G$.

\begin{definition}
	A \emph{commutator blueprint of type $(W, S)$} is a family $\mathcal{M} = \left(M_{\alpha, \beta}^G \right)_{(G, \alpha, \beta) \in \mathcal{I}}$ of subsets $M_{\alpha, \beta}^G \subseteq (\alpha, \beta)$ ordered via $\leq_G$ satisfying the following axioms:
	\begin{enumerate}[label=(CB\arabic*)]
		\item Let $G = (c_0, \ldots, c_k) \in \mathrm{Min}$ and let $H = (c_0, \ldots, c_m)$ for some $1 \leq m \leq k$. Then $M_{\alpha, \beta}^H = M_{\alpha, \beta}^G$ holds for all $\alpha, \beta \in \Phi(H)$ with $\alpha \leq_H \beta$.
		
		\item Suppose $s\neq t \in S$ with $m := m_{st} < \infty$. Let $G \in \mathrm{Min}(r_{\{s, t\}})$, let $(\alpha_1, \ldots, \alpha_m)$ be the sequence of roots crossed by $G$ and let $1 \leq i < j \leq m$. If $m_{st} \neq 6$, then we have
		\[ M_{\alpha_i, \alpha_j}^G = \begin{cases}
			(\alpha_i, \alpha_j) & \{ \alpha_i, \alpha_j \} = \{ \alpha_s, \alpha_t \} \\
			\emptyset & \{ \alpha_i, \alpha_j \} \neq \{ \alpha_s, \alpha_t \}
		\end{cases} \]
		If $m_{st} = 6$ and if $(t, s) \in E(S)$ and $G \in \mathrm{Min}_s(r_{\{s, t\}})$, then $M_{\alpha_i, \alpha_j}^G = M_{\{\alpha_i, \alpha_j\}}$ as sets, where $M_{\{\alpha_i, \alpha_j\}}$ is given in Example \ref{exprank2rgd}.
		
		\item For each $w\in W$ we have $\vert U_w \vert = 2^{\ell(w)}$, where $U_w$ is defined as above.
	\end{enumerate}
\end{definition}

\begin{remark}\label{Remark: Product mapping is a bijection}
	 In (CB$1$) we have $\Phi(H) \subseteq \Phi(G)$ and the order $\leq_G$ restricted to elements in $\Phi(H)$ is precisely the order $\leq_H$. Thus the expression $M_{\alpha, \beta}^G$ is defined. In (CB$2$) we have $\Phi(G) = [\alpha_s, \alpha_t]$ and we only require that $M_{\alpha, \beta}^G = M_{\{\alpha, \beta\}}$ as sets. Note that $M_{\alpha, \beta}^G$ is an ordered set and the axiom only makes a statement about the underlying set. Moreover, we note that the connection between the direction of the edge in the Coxeter diagram and the commutation relations corresponds to the connection of the commutation relations and the usual \emph{Dynkin diagrams}.
\end{remark}

\begin{example}\label{expintrgrable}
	Let $\mathcal{D} = (G, (U_{\alpha})_{\alpha \in \Phi})$ be an RGD-system of type $(W, S)$ over $\FF_2$, let $H = (c_0, \ldots, c_k) \in \mathrm{Min}$ and let $(\alpha_1, \ldots, \alpha_k)$ be the sequence of roots crossed by $H$. Then we have $\Phi(H) = \{ \alpha_1 \leq_H \cdots \leq_H \alpha_k \}$. By \cite[Corollary $8.34(1)$]{AB08} there exists for each $1 \leq m < i < n \leq k$ a unique $\epsilon_i \in \{ 0, 1 \}$ such that $[u_{\alpha_m}, u_{\alpha_n}] = \prod\nolimits_{i=m+1}^{n-1} u_{\alpha_i}^{\epsilon_i}$ holds, and $\epsilon_i = 1$ implies $\alpha_i \in (\alpha_m, \alpha_n)$. We define $ M(\mathcal{D})_{\alpha_m, \alpha_n}^H := \{ \alpha_i \in \Phi(H) \mid [u_{\alpha_m}, u_{\alpha_n}] = \prod\nolimits_{i=m+1}^{n-1} u_{\alpha_i}^{\epsilon_i}, \epsilon_i = 1 \} \subseteq (\alpha_m, \alpha_n)$ and $\mathcal{M}_{\mathcal{D}} := \left( M(\mathcal{D})_{\alpha, \beta}^H \right)_{(H, \alpha, \beta) \in \mathcal{I}}$. 
	
	For $s, t \in S$ with $m_{st} = 6$ we get a canonical direction of the edge $\{s, t\}$ via the commutation relations. Clearly, (CB$1$) is satisfied. By Example \ref{exprank2rgd}, (CB$2$) holds and (CB$3$) is satisfies by \cite[Corollary $8.34(1)$]{AB08}. Thus $\mathcal{M}_{\mathcal{D}}$ is a commutator blueprint of type $(W, S)$.
\end{example}

\begin{convention}
	From now on we let $\mathcal{M} = \left(M_{\alpha, \beta}^G \right)_{(G, \alpha, \beta) \in \mathcal{I}}$ be a commutator blueprint of type $(W, S)$.
\end{convention}

\begin{lemma}\label{Lemma: Definition of UG}
	Let $w\in W, G = (c_0, \ldots, c_k) \in \mathrm{Min}(w)$ and let $(\alpha_1, \ldots, \alpha_k)$ be the sequence of roots crossed by $G$. Then the group $U_w$ has the following presentation:
	\[ U_G := \left\langle u_{\alpha_1}, \ldots, u_{\alpha_k} \;\middle|\; \forall 1 \leq i \leq j \leq k: \quad u_{\alpha_i}^2 = 1, \quad [u_{\alpha_i}, u_{\alpha_j}] = \prod\nolimits_{\gamma \in M_{\alpha_i, \alpha_j}^G} u_{\gamma} \right\rangle \]
\end{lemma}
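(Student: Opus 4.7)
The plan is to prove the claimed isomorphism by a cardinality argument. Since the defining relations of $U_G$ form a subset of those used to present $U_w$ (the latter additionally imposes analogous commutator relations coming from every other $G' \in \mathrm{Min}(w)$), sending each generator to its namesake yields a canonical surjection $\pi \colon U_G \twoheadrightarrow U_w$. Axiom (CB$3$) gives $|U_w| = 2^{\ell(w)} = 2^k$, so it suffices to prove $|U_G| \leq 2^k$.

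I would establish this bound by induction on $k$, with the case $k = 0$ being trivial. For the inductive step I take the prefix $G' := (c_0, \ldots, c_{k-1})$, a minimal gallery of length $k-1$ whose crossed roots are $\alpha_1, \ldots, \alpha_{k-1}$. Axiom (CB$1$) gives $M_{\alpha_i, \alpha_j}^{G'} = M_{\alpha_i, \alpha_j}^{G}$ for $1 \leq i \leq j \leq k-1$, so every defining relation of $U_{G'}$ holds in $U_G$ between the generators $u_{\alpha_1}, \ldots, u_{\alpha_{k-1}}$. This produces a homomorphism $U_{G'} \to U_G$ whose image is the subgroup $H := \langle u_{\alpha_1}, \ldots, u_{\alpha_{k-1}}\rangle$. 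Combining the inductive hypothesis applied to $G'$ with (CB$3$) yields $|U_{G'}| = 2^{k-1}$, whence $|H| \leq 2^{k-1}$.

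It remains to show that $u_{\alpha_k}$ normalizes $H$, which will force $U_G = H \cup H u_{\alpha_k}$ and hence $|U_G| \leq 2|H| \leq 2^k$. For $i < k$, the defining relation together with $u_{\alpha_i}^2 = u_{\alpha_k}^2 = 1$ rearranges to
\[
	u_{\alpha_k} u_{\alpha_i} u_{\alpha_k} = u_{\alpha_i} \prod\nolimits_{\gamma \in M_{\alpha_i, \alpha_k}^G} u_{\gamma},
\]
so the task reduces to checking $M_{\alpha_i, \alpha_k}^G \subseteq \{\alpha_1, \ldots, \alpha_{k-1}\}$. I expect this to be the main (though minor) obstacle, and it reduces to the standard fact that $[\alpha_i, \alpha_k] \subseteq \Phi(w)$: any $\gamma \in [\alpha_i, \alpha_k]$ contains $1_W \in \alpha_i \cap \alpha_k$ and satisfies $c_k \in (-\alpha_i) \cap (-\alpha_k) \subseteq -\gamma$, so $\gamma \in \Phi(w) = \Phi(G)$; as $\alpha_k$ is by definition excluded from $(\alpha_i, \alpha_k)$, the desired inclusion follows. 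Once this is established, the cardinality bound and the isomorphism $U_G \cong U_w$ drop out at once.
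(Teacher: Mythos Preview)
Your proposal is correct and follows essentially the same approach as the paper: both establish the canonical surjection $U_G \twoheadrightarrow U_w$ and then use (CB$3$) together with the bound $|U_G| \leq 2^k$ to conclude. The paper simply asserts that every element of $U_G$ can be written as $\prod_{i=1}^k u_{\alpha_i}^{\epsilon_i}$ with $\epsilon_i \in \{0,1\}$, whereas your inductive argument (showing $u_{\alpha_k}$ normalizes $H$ via $(\alpha_i,\alpha_k) \subseteq \{\alpha_1,\ldots,\alpha_{k-1}\}$) is precisely the justification the paper leaves implicit.
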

\begin{proof}
	Note that $\Phi(w) = \Phi(G) = \{ \alpha_1, \ldots, \alpha_k \}$. Clearly, we have an epimorphism $U_G \to U_w, \alpha_i \mapsto \alpha_i$. Since each element in $U_G$ is of the form $\prod\nolimits_{i=1}^{k} u_{\alpha_i}^{\epsilon_i}$, where $\epsilon_i \in \{0, 1\}$, $U_G$ has cardinality at most $2^k$. As $U_w$ has cardinality $2^k$, the claim follows.
\end{proof}

\begin{definition}
	Using the previous lemma, the axioms (CB$1$) and (CB$3$) imply that the canonical mapping $u_{\alpha} \mapsto u_{\alpha}$ induces a monomorphism from $U_w$ to $U_{ws}$ for all $w\in W, s\in S$ with $\ell(ws) = \ell(w) +1$. We denote by $U_+$ the direct limit of the groups $U_w$ with natural inclusions $U_w \to U_{ws}$ if $\ell(ws) = \ell(w) +1$.
\end{definition}

\begin{definition}\label{Definition: Properties of M}
	\begin{enumerate}[label=(\alph*)]
		\item $\mathcal{M}$ is called \emph{Weyl-invariant} if for all $w\in W$, $s\in S$, $G \in \mathrm{Min}_s(w)$ and $\alpha, \beta \in \Phi(G) \backslash \{ \alpha_s \}$ with $\alpha \leq_G \beta$ we have $M_{s\alpha, s\beta}^{sG} = sM_{\alpha, \beta}^G := \{ s\gamma \mid \gamma \in M_{\alpha, \beta}^G \}$.
		
		\item $\mathcal{M}$ is called \emph{faithful}, if the canonical homomorphisms $U_w \to U_+$ are injective.
		
		\item $\mathcal{M}$ is called \emph{integrable} if there exists an RGD-system $\mathcal{D}$ of type $(W, S)$ over $\FF_2$ such that the two families $\mathcal{M}$ and $\mathcal{M}_{\mathcal{D}}$ coincide pointwise.
	\end{enumerate}
\end{definition}

\begin{remark}\label{Remark: Weyl-invariance}
	\begin{enumerate}[label=(\alph*)]
		\item Let $\mathcal{D}$ be an RGD-system of type $(W, S)$ over $\FF_2$. Then $\mathcal{M}_{\mathcal{D}}$ is faithful (cf.\ \cite[Theorem $8.85$]{AB08}) and Weyl-invariant.
		
		\item Suppose that $\mathcal{M}$ is Weyl-invariant. Let $w \in W, s\in S, G \in \mathrm{Min}_s(w)$ and let $\alpha \neq \beta \in \Phi(G) \backslash \{ \alpha_s \}$. Then $\alpha \leq_G \beta$ if and only if $s\alpha \leq_{sG} s\beta$. Moreover, we have the following relation in $U_{sw}$:
		\allowdisplaybreaks
		\begin{align*}
			[u_{s\alpha}, u_{s\beta}] = \prod\nolimits_{\gamma \in M_{s\alpha, s\beta}^{sG}} u_{\gamma} = \prod\nolimits_{\gamma \in sM_{\alpha, \beta}^G} u_{\gamma} = \prod\nolimits_{\gamma \in M_{\alpha, \beta}^G} u_{s\gamma}
		\end{align*}
	\end{enumerate}
\end{remark}

\begin{lemma}\label{Lemma: V_{w, s} to U_{sw} isomorphism}
	For $w\in W, s\in S$ with $\ell(sw) = \ell(w) -1$ we let $V_{w, s}$ be the normal subgroup of $U_w$ generated by $\{ u_{\alpha} \mid \alpha \in \Phi(w) \backslash \{ \alpha_s \}  \}$. Let $G = (c_0, \ldots, c_k) \in \mathrm{Min}_s(w)$ and let $(\alpha_1 = \alpha_s, \ldots, \alpha_k)$ be the sequence of roots crossed by $G$. Then we define the group
	\[ V_G := \left\langle u_{\alpha_2}, \ldots, u_{\alpha_k} \;\middle|\; \forall 2 \leq i \leq j \leq k: \quad u_{\alpha_i}^2 = 1, \quad [u_{\alpha_i}, u_{\alpha_j}] = \prod\nolimits_{\gamma \in M_{\alpha_i, \alpha_j}^G} u_{\gamma} \right\rangle. \]
	\begin{enumerate}[label=(\alph*)]
		\item The canonical mapping $u_{\alpha_i} \mapsto u_{\alpha_i}$ extends to an isomorphism from $V_G$ to $V_{w, s}$.
		
		\item If $\mathcal{M}$ is Weyl-invariant, $u_{\alpha} \mapsto u_{s\alpha}$ extends to an isomorphism from $V_{w, s}$ to $U_{sw}$.
	\end{enumerate}
\end{lemma}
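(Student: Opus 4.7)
For part (a) the plan is to compare presentations directly. By Lemma \ref{Lemma: Definition of UG}, $U_w$ has the presentation $U_G$ on generators $u_{\alpha_1}, \ldots, u_{\alpha_k}$, and the defining relations of $V_G$ are precisely those relations of $U_G$ which do not involve $u_{\alpha_1} = u_{\alpha_s}$. So $u_{\alpha_i} \mapsto u_{\alpha_i}$ extends to a homomorphism $\phi \colon V_G \to U_w$ with image $\langle u_{\alpha_2}, \ldots, u_{\alpha_k}\rangle$. The key combinatorial observation is that $\alpha_s$ never appears in any $M^G_{\alpha_i, \alpha_j}$: for $i = 1$ this is forced by $\alpha_1 \notin (\alpha_1, \alpha_j)$ by definition of the open interval, and for $i, j \geq 2$ the chamber $c_1 = s$ lies in $\alpha_i \cap \alpha_j$ (since $G$ crosses $\alpha_i, \alpha_j$ only at steps $i, j \geq 2$) while $s \notin \alpha_s$, whence $\alpha_i \cap \alpha_j \not\subseteq \alpha_s$ and so $\alpha_s \notin (\alpha_i, \alpha_j)$. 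Consequently conjugation by $u_{\alpha_1}$ maps each $u_{\alpha_i}$ ($i \geq 2$) into $\langle u_{\alpha_2}, \ldots, u_{\alpha_k}\rangle$, so this subgroup is normal in $U_w$ and therefore equals $V_{w,s}$.

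To conclude (a) I would close the gap by comparing cardinalities. A normal-form argument in $V_G$, using the commutator relations (whose right-hand sides lie in $\langle u_{\alpha_2}, \ldots, u_{\alpha_k}\rangle$ by the same observation) to sort products and then $u_{\alpha_i}^2 = 1$ to reduce exponents, shows that every element of $V_G$ has the form $\prod_{i=2}^k u_{\alpha_i}^{\epsilon_i}$ with $\epsilon_i \in \{0, 1\}$, so $|V_G| \leq 2^{k-1}$ and hence $|V_{w,s}| = |\IM \phi| \leq 2^{k-1}$. On the other hand, $U_w / V_{w,s}$ is generated by the coset of $u_{\alpha_1}$ (all other generators die in the quotient), and this coset squares to the identity, so $[U_w : V_{w,s}] \leq 2$; combined with $|U_w| = 2^k$ from (CB$3$) this gives $|V_{w,s}| \geq 2^{k-1}$. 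Therefore $|V_G| = |V_{w,s}| = 2^{k-1}$ and $\phi$ is an isomorphism.

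For part (b), by (a) it suffices to construct an isomorphism $\psi \colon V_G \to U_{sw}$ with $\psi(u_{\alpha_i}) = u_{s\alpha_i}$. The gallery $sG \in \mathrm{Min}(sw)$ crosses the sequence of positive roots $(s\alpha_2, \ldots, s\alpha_k)$, so Lemma \ref{Lemma: Definition of UG} identifies $U_{sw}$ with the presentation $U_{sG}$ on generators $u_{s\alpha_2}, \ldots, u_{s\alpha_k}$ with relations $u_{s\alpha_i}^2 = 1$ and $[u_{s\alpha_i}, u_{s\alpha_j}] = \prod_{\gamma \in M^{sG}_{s\alpha_i, s\alpha_j}} u_\gamma$. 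Weyl-invariance rewrites the right-hand side as $\prod_{\gamma \in M^G_{\alpha_i, \alpha_j}} u_{s\gamma}$, and by Remark \ref{Remark: Weyl-invariance}(b) the action of $s$ preserves order, so the product orderings $\leq_G$ and $\leq_{sG}$ correspond under $\gamma \leftrightarrow s\gamma$. Hence $u_{\alpha_i} \mapsto u_{s\alpha_i}$ sends every defining relation of $V_G$ to a defining relation of $U_{sG}$, giving a well-defined surjection $\psi$; comparing $|V_G| = 2^{k-1} = |U_{sw}|$ via (a) and (CB$3$) upgrades $\psi$ to an isomorphism, and composing with the inverse of $\phi$ yields the desired isomorphism $V_{w,s} \to U_{sw}$ sending $u_\alpha \mapsto u_{s\alpha}$.

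The only genuinely non-routine step I expect is the combinatorial observation that $\alpha_s \notin (\alpha_i, \alpha_j)$ for all $\alpha_i, \alpha_j \in \Phi(w) \setminus \{\alpha_s\}$, since this single fact powers both the normality of $\langle u_{\alpha_2}, \ldots, u_{\alpha_k}\rangle$ in $U_w$ and the normal-form/cardinality bounds used in (a). Once it is in hand, the rest of (a) is a formal comparison of presentations against $|U_w| = 2^{\ell(w)}$, and (b) is simply a relabelling of those same relations under the bijection induced by $s$, with Weyl-invariance supplying essentially the entire content.
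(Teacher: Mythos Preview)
Your proof is correct and follows essentially the same approach as the paper: a presentation/cardinality comparison for part (a) (the paper simply says ``follows similar as in Lemma~\ref{Lemma: Definition of UG}''), and for part (b) the relabelling via $sG \in \mathrm{Min}(sw)$ together with Weyl-invariance, exactly as the paper does. Your explicit verification that $\alpha_s \notin (\alpha_i,\alpha_j)$ for $i,j \geq 2$ is the content the paper compresses into the phrase ``using the commutation relations''.
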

\begin{proof}
	Using the commutation relations and the fact that $[u_{\alpha_s}, u_{\alpha}] = u_{\alpha}^{u_{\alpha_s}} u_{\alpha}$, the subgroup $V_{w, s}$ is a normal subgroup of $U_w$. The first part follows similar as in Lemma \ref{Lemma: Definition of UG}. For the second part we note that $sG \in \mathrm{Min}(sw)$. Using Lemma \ref{Lemma: Definition of UG} and Remark \ref{Remark: Weyl-invariance}$(b)$, we obtain that the mapping $u_{\alpha} \to u_{s\alpha}$ extends to an isomorphism.
\end{proof}

\section{Construction of the rank $1$ parabolics}\label{Section: Constructing the rank 1 parabolics}

\begin{convention}
	From now on we assume that the commutator blueprint $\mathcal{M}$ is faithful and Weyl-invariant. Moreover, we fix $s\in S$ in this section, unless it is stated otherwise.
\end{convention}

As we have seen in Remark \ref{Remark: Weyl-invariance}$(a)$, an integrable commutator blueprint is necessarily faithful and Weyl-invariant. We will show (cf.\ Theorem \ref{RGDsystem}) that any faithful and Weyl-invariant commutator blueprint is already integrable by constructing an RGD-system which contains the group $U_+$ as a subgroup. As a first step we construct the group $P_s$ (cf.\ Definition \ref{Definition: Ps} and \ref{Definition: direct limit G}), which contains $U_+$ as a subgroup.

Since $\mathcal{M}$ is faithful, we can identify $U_w$ with its image in $U_+$. In particular, we have $u_{\alpha} \in U_+$ for all $\alpha \in \Phi_+$. We will write for short $u_s := u_{\alpha_s}$. We define the subgroup $N_s := \langle x^{-1} u_{\alpha} x \mid \alpha \in \Phi_+ \backslash \{ \alpha_s \}, x\in U_s \rangle \leq U_+$ (the idea of the definition of $N_s$ is obtained from \cite[$6.2.1$]{Re03}). Next, we will construct two automorphisms of $N_s$. Clearly, $U_+$ is generated by $U_s$ and $N_s$, and $N_s$ is a normal subgroup of $U_+$.

\begin{lemma}\label{UplusNsUs}
	We have $U_+ = U_s \ltimes N_s$.
\end{lemma}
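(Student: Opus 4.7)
The plan is to obtain the semidirect decomposition in $U_+$ by passing to the direct limit of the rank-by-rank decompositions $U_w = U_s \ltimes V_{w,s}$ provided by Lemma \ref{Lemma: V_{w, s} to U_{sw} isomorphism}(a). The text already declares $U_+ = \langle U_s, N_s \rangle$ and $N_s \triangleleft U_+$ to be clear, so the substantive content is the triviality of $U_s \cap N_s$. Rather than tackle this directly in the direct limit, my strategy is first to identify $N_s$ with $\bigcup_{w:\,\ell(sw)<\ell(w)} V_{w,s}$, and then to finish by a counting argument inside a single $U_w$, invoking faithfulness.

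For the identification I would argue both inclusions. Fix $w$ with $\ell(sw)<\ell(w)$ and set $K_w := \langle x^{-1} u_\alpha x : x \in U_s,\ \alpha \in \Phi(w) \setminus \{\alpha_s\} \rangle \subseteq N_s$. Then $K_w$ is closed under $U_s$-conjugation by construction and under $u_\beta$-conjugation for every $\beta \neq \alpha_s$ (since $u_\beta \in K_w$, these are conjugations by elements of $K_w$ itself); as these generate $U_w$, it follows that $K_w \triangleleft U_w$. Because $K_w$ contains all $u_\alpha$ with $\alpha \in \Phi(w)\setminus\{\alpha_s\}$, we conclude $V_{w,s} \subseteq K_w \subseteq N_s$. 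Conversely, for any generator $x^{-1} u_\alpha x$ of $N_s$ (with $\alpha \neq \alpha_s$), I choose $w_0 \in W$ with $\alpha \in \Phi(w_0)$ and $\ell(sw_0) = \ell(w_0)+1$, and set $w := sw_0$; then $\alpha_s, \alpha \in \Phi(w)$ and $\ell(sw)<\ell(w)$, so $x^{-1} u_\alpha x \in V_{w,s}$. Since $\{V_{w,s}\}$ is directed under inclusion ($V_{w,s} \subseteq V_{w',s}$ whenever $\Phi(w)\subseteq\Phi(w')$ and $\alpha_s$ belongs to both), this yields $N_s = \bigcup V_{w,s}$.

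Once this identification is in hand, the factorization $U_+ = U_s \cdot N_s$ is immediate from $U_+ = \bigcup_w U_w = \bigcup_w U_s V_{w,s} = U_s \bigcup_w V_{w,s}$, using the cofinality in $W$ of the elements $w$ with $\ell(sw)<\ell(w)$. For $U_s \cap N_s = \{1\}$, suppose on the contrary that $u_s \in N_s$; then $u_s \in V_{w,s}$ for some such $w$. By Lemma \ref{Lemma: V_{w, s} to U_{sw} isomorphism}(a) combined with faithfulness of $\mathcal{M}$, we have $|V_{w,s}| = |U_{sw}| = 2^{\ell(w)-1}$. Since $V_{w,s}\triangleleft U_w$ and $U_s V_{w,s}=U_w$, the product formula gives
\[
|U_s \cap V_{w,s}| \;=\; \frac{|U_s|\,|V_{w,s}|}{|U_s V_{w,s}|} \;=\; \frac{2 \cdot 2^{\ell(w)-1}}{2^{\ell(w)}} \;=\; 1,
\]
contradicting $u_s \in V_{w,s}$.

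The only real obstacle is the bookkeeping in the identification $N_s = \bigcup V_{w,s}$: specifically, showing that the normal closure $V_{w,s}$ of $\{u_\alpha : \alpha \in \Phi(w)\setminus\{\alpha_s\}\}$ already equals the plain subgroup $K_w$ generated by the $U_s$-conjugates, and checking directedness of the family. The final counting step is then a one-line consequence of faithfulness and the previously established rank-$\ell(w)$ semidirect decomposition.
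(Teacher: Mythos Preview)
Your approach has a genuine gap in the identification $N_s = \bigcup_{w:\,\ell(sw)<\ell(w)} V_{w,s}$. The step ``set $w := sw_0$; then $\alpha_s, \alpha \in \Phi(w)$'' is false in general: from $\alpha \in \Phi(w_0)$ and $\ell(sw_0)=\ell(w_0)+1$ it does \emph{not} follow that $\alpha \in \Phi(sw_0)$. For a concrete counterexample take $(W,S)$ infinite dihedral, $\alpha = t\alpha_s$, and $w_0 = ts$. Then $w_0 \notin \alpha$ and $\ell(sw_0)=3>2$, but $sw_0 = sts \in t\alpha_s$, so $\alpha \notin \Phi(sts)$. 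In fact in this example $(-\alpha_s)\cap(-t\alpha_s)=\emptyset$, so there is \emph{no} $w$ whatsoever with both $\alpha_s$ and $t\alpha_s$ in $\Phi(w)$; hence $u_{t\alpha_s}\in N_s$ lies in no $V_{w,s}$, and your claimed identification fails. The directedness assertion has the same problem: the inductive system $\{U_w\}$ (ordered by weak right Bruhat order) is \emph{not} directed when $W$ is infinite, so $U_+$ is a genuine colimit, not a directed union, and an equation $u_s = (\text{product of conjugates})$ in $U_+$ cannot be pushed down to a single $U_w$.

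The paper sidesteps this entirely by constructing a retraction $\phi:U_+\to U_s$ via the universal property of the colimit: the assignment $u_\alpha\mapsto 1$ for $\alpha\neq\alpha_s$ and $u_s\mapsto u_s$ respects all relations in every $U_w$ (because $\alpha_s\notin(\alpha,\beta)$ for any prenilpotent pair), and the resulting maps $U_w\to U_s$ are compatible with the structure morphisms. Then $N_s\le\ker\phi$ while $\phi|_{U_s}=\id$, giving $U_s\cap N_s=1$ directly. Your counting argument for $U_s\cap V_{w,s}=1$ inside a fixed $U_w$ is perfectly fine; the obstruction is purely in transporting it to the colimit.
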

\begin{proof}
	It suffices to show that $U_s \cap N_s = 1$. We first show that the assignments $u_{\alpha} \mapsto 1$ for $\alpha_s \neq \alpha \in \Phi_+$ and $u_s \mapsto u_s$ extend to a homomorphism $U_w \to U_s$. In view of the definition of $U_w$ it suffices to consider the relations $u_{\alpha}^2 = 1$ and $[u_{\alpha}, u_{\beta}] = u_{\gamma_1} \cdots u_{\gamma_k}$. Since $\alpha_s \notin (\alpha, \beta)$ for all $\{ \alpha, \beta \} \in \P$, these relations are mapped to $1$ and we obtain homomorphisms $U_w \to U_s$ for every $w\in W$. Since these homomorphisms respect the natural inclusions $U_w \to U_{wt}$, the universal property of direct limits yields a homomorphism $\phi: U_+ \to U_s$ with $\phi(u_{\alpha}) = 1$ for $\alpha_s \neq \alpha \in \Phi_+$ and $\phi(u_s) = u_s$. Since $N_s \leq \ker \phi$ and $U_s \cap \ker \phi = 1$, the claim follows.
\end{proof}

\begin{remark}
	The next step is to construct an automorphism $\tau_s$ on $N_s$ which maps $u_{\alpha}$ to $u_{s\alpha}$. The rough idea is that $P_s$ should look like $\langle u_s, \tau_s \rangle \ltimes N_s$ (cf.\ Definition \ref{Definition: Ps}). In the next lemma we will show that $N_s$ has a suitable presentation. The elements $v_{\alpha}$ will play the role of the elements $u_s u_{\alpha} u_s$ for all $\alpha_s \neq \alpha \in \Phi_+$.
\end{remark}

\begin{lemma}\label{Lemma: Presentation of Ns}
	We define the group $M_s$ via the following presentation:
	\[ \left\langle \{ u_{\alpha}, v_{\alpha} \mid \alpha_s \neq \alpha \in \Phi_+ \} \; \middle| \; \begin{cases*}
		\forall \alpha_s \neq \alpha \in \Phi_+: u_{\alpha}^2 = 1 = v_{\alpha}^2, \\
		\forall w\in W, G \in \mathrm{Min}_s(w), \alpha \leq_G \beta \in \Phi(G) \backslash \{ \alpha_s \}: \\
		\qquad [u_{\alpha}, u_{\beta}] = \prod\nolimits_{\gamma \in M_{\alpha, \beta}^G} u_{\gamma}, \quad [v_{\alpha}, v_{\beta}] = \prod\nolimits_{\gamma \in M_{\alpha, \beta}^G} v_{\gamma}, \\
		\forall w\in W, \ell(sw) = \ell(w) -1, G \in \mathrm{Min}_s(w), \alpha_s \neq \alpha \in \Phi(G): \\ \qquad v_{\alpha} = \left( \prod\nolimits_{\gamma \in M_{\alpha_s, \alpha}^G} u_{\gamma} \right) u_{\alpha}
	\end{cases*} \right\rangle \]
	Then we have $u_s \in \Aut(M_s)$ such that $u_s(u_{\alpha}) = v_{\alpha}$ and $u_s(v_{\alpha}) = u_{\alpha}$. In particular, 
	\[ M_s \to N_s, \begin{cases*}
		u_{\alpha} \mapsto u_{\alpha} \\
		v_{\alpha} \mapsto u_s u_{\alpha} u_s
	\end{cases*} \]
	is an isomorphism.
\end{lemma}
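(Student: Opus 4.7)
The plan is to construct the canonical map $\phi\colon M_s \to N_s$ on generators by $u_\alpha \mapsto u_\alpha$ and $v_\alpha \mapsto u_s u_\alpha u_s$, verify that it is a well-defined surjective homomorphism, and then prove that it is actually an isomorphism by identifying the given presentation of $M_s$ with a Reidemeister--Schreier presentation of $N_s$ obtained from an auxiliary presentation of $U_+$ that uses only galleries in $\mathrm{Min}_s$. Once this is established, the automorphism $u_s \in \Aut(M_s)$ swapping $u_\alpha \leftrightarrow v_\alpha$ is obtained by transporting the conjugation-by-$u_s$ automorphism of $N_s$ back through $\phi$.

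For the well-definedness of $\phi$ I check each defining relation of $M_s$ in $N_s \leq U_+$. The squaring relations are trivial since $u_\alpha$ and $u_s u_\alpha u_s$ are involutions. The $u$-commutator relations hold in $U_+$ by the blueprint together with faithfulness of $\mathcal{M}$, and the $v$-commutator relations follow by conjugating the $u$-relations by $u_s$. The bridge relation $v_\alpha = (\prod u_\gamma) u_\alpha$ translates to $u_s u_\alpha u_s = (\prod u_\gamma) u_\alpha$, equivalently $[u_s, u_\alpha] = \prod_{\gamma \in M_{\alpha_s,\alpha}^G} u_\gamma$; this is the commutator relation of $U_w$ imposed by the blueprint for any $G \in \mathrm{Min}_s(w)$ with $\ell(sw) = \ell(w) - 1$ and $\alpha \in \Phi(G) \setminus \{\alpha_s\}$ (so that $\alpha_s \leq_G \alpha$). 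Surjectivity is immediate since $N_s$ is generated by $\{u_\alpha, u_s u_\alpha u_s : \alpha_s \neq \alpha \in \Phi_+\}$.

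To prove injectivity I introduce an auxiliary group $\bar M_s$ presented on $\{u_\alpha : \alpha \in \Phi_+\}$ with $u_\alpha^2 = 1$ and the commutator relations $[u_\alpha, u_\beta] = \prod u_\gamma$ \emph{only} for $G \in \mathrm{Min}_s(w)$ and $\alpha \leq_G \beta \in \Phi(G)$ (so including the relations $[u_s, u_\alpha] = \prod u_\gamma$ in the case $\ell(sw) = \ell(w) - 1$). By Lemma~\ref{Lemma: Definition of UG}, for each $w$ a single $G_0 \in \mathrm{Min}_s(w)$ already presents $U_w$, so the subgroup $\bar U_w := \langle u_\alpha : \alpha \in \Phi(w)\rangle \leq \bar M_s$ is a quotient of $U_{G_0} = U_w$ generated by $\ell(w)$ involutions, whence $|\bar U_w| \leq 2^{\ell(w)} = |U_w|$; combined with the canonical surjection $\bar M_s \twoheadrightarrow U_+$ and the compatibility provided by (CB$1$), this forces $\bar M_s \cong U_+$. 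Since $\bar M_s = \langle u_s\rangle \ltimes \bar N_s$ by Lemma~\ref{UplusNsUs}, a Reidemeister--Schreier rewriting with Schreier transversal $\{1, u_s\}$ yields a presentation of $\bar N_s \cong N_s$ on the Schreier generators $u_\alpha$ and $v_\alpha := u_s u_\alpha u_s$ whose relations match those of $M_s$, \emph{up to} the $u_s$-conjugate bridge relations $u_\alpha = (\prod v_\gamma) v_\alpha$. Verifying that each such conjugate is already a formal consequence of the defining relations of $M_s$ — by expanding $\prod v_\gamma$ via (iv) for each $\gamma \in M_{\alpha_s,\alpha}^G$ and rearranging using the $u$-commutator relations to collapse to $u_\alpha$ — is the \emph{main obstacle} of the argument. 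The nontrivial algebraic identity involved goes through precisely because the self-consistency of the blueprint (enforced by (CB$3$) together with the involutivity of $u_s$) prevents cascades of nontrivial bridges from propagating through the $\gamma \in M_{\alpha_s,\alpha}^G$. Granted this, $\phi$ is an isomorphism, and the automorphism $u_s \in \Aut(M_s)$ then arises by transport of conjugation by $u_s$ on $N_s$.
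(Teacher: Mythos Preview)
Your strategy is sound and the two key ingredients you isolate are exactly the ones the paper uses, but you assemble them in the reverse order and leave the crucial verification underspecified.

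The paper proceeds as follows: it \emph{first} proves directly that the swap $u_\alpha\leftrightarrow v_\alpha$ is an automorphism of $M_s$. The only nontrivial relation to check is precisely your ``conjugate bridge'' $u_\alpha=(\prod_{\gamma} v_\gamma)\,v_\alpha$; the paper does not invoke any abstract ``self-consistency of the blueprint'' but performs an explicit computation inside $U_w$, namely
\[
\Bigl(\prod_{\gamma\in M_{\alpha_s,\alpha}^G}[u_s,u_\gamma]\,u_\gamma\Bigr)[u_s,u_\alpha]\,u_\alpha
= u_s[u_s,u_\alpha]u_s\cdot[u_s,u_\alpha]\cdot u_\alpha
= (u_\alpha u_s u_\alpha)^2\cdot u_\alpha
= u_\alpha,
\]
which is then pushed through the homomorphism $V_{w,s}\cong V_G\to M_s$ and combined with the defining bridge relations $v_\delta=(\prod u_\epsilon)u_\delta$. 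This is the heart of the proof, and your sentence about ``cascades of nontrivial bridges'' being prevented is not a substitute for it. Once the automorphism exists, the paper forms $U=\ZZ_2\ltimes M_s$ and constructs mutually inverse maps $U\leftrightarrows U_+$ using Lemma~\ref{Lemma: Definition of UG} (a single gallery in $\mathrm{Min}_s(w)$ already presents $U_w$); this is exactly the content of your auxiliary isomorphism $\bar M_s\cong U_+$.

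So your Reidemeister--Schreier packaging is a legitimate alternative organisation, and your identification of the ``main obstacle'' is correct, but the obstacle is not overcome in your write-up: you need the concrete identity above (carried out in $U_w$ and then transferred to $M_s$ via $V_G\to M_s$), not an appeal to (CB3). With that computation inserted, your argument goes through; without it, the injectivity step is a gap. A minor point: your claim that $\bar M_s\cong U_+$ is correct, but the justification should be phrased as building inverse maps $U_+\leftrightarrows\bar M_s$ via Lemma~\ref{Lemma: Definition of UG} and the direct-limit universal property, rather than via a cardinality comparison on $\bar U_w$ alone.
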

\begin{proof}
	We show that the assignments $u_{\alpha} \mapsto v_{\alpha}$ and $v_{\alpha} \mapsto u_{\alpha}$ extend to an endomorphism of $M_s$. Therefore we have to show that every relation is mapped to a relation. For that it suffices to consider the relations of the form $v_{\alpha} = \left( \prod\nolimits_{\gamma \in M_{\alpha_s, \alpha}^G} u_{\gamma} \right) u_{\alpha}$. Suppose $w\in W$ with that $\ell(sw) = \ell(w) -1$ and let $G \in \mathrm{Min}_s(w)$. Using Lemma \ref{Lemma: V_{w, s} to U_{sw} isomorphism} we deduce that the canonical assignment $u_{\alpha} \mapsto u_{\alpha}$ extends to a homomorphism from $V_{w, s} \cong V_G$ to $M_s$. Moreover, for $\alpha_s \neq \alpha \in \Phi(G)$ we have the following relation in $U_w$ (note that $\alpha \in \Phi(G)$ implies $\gamma \in \Phi(G)$ for all $\gamma \in (\alpha_s, \alpha)$):
	\allowdisplaybreaks
	\begin{align*}
		\left( \prod\nolimits_{\gamma \in M_{\alpha_s, \alpha}^G} \left( \prod\nolimits_{\beta \in M_{\alpha_s, \gamma}^G} u_{\beta} \right) u_{\gamma} \right) \left( \prod\nolimits_{\gamma \in M_{\alpha_s, \alpha}^G} u_{\gamma} \right) u_{\alpha} &= \left( \prod\nolimits_{\gamma \in M_{\alpha_s, \alpha}^G} [u_s, u_{\gamma}] u_{\gamma} \right) [u_s, u_{\alpha}] u_{\alpha} \\	
		&= u_s \left( \prod\nolimits_{\gamma \in M_{\alpha_s, \alpha}^G} u_{\gamma} \right) u_{\alpha} u_s \\
		&= u_s [u_s, u_{\alpha}] u_{\alpha} u_s \\
		&= u_{\alpha}
	\end{align*}
	Since both sides of the equation are contained in $V_{w, s}$, this is also a relation in $M_s$. Note that by definition we also have the relation $v_{\delta} = \left( \prod\nolimits_{\epsilon \in M_{\alpha_s, \delta}^G} u_{\epsilon} \right) u_{\delta}$ for each $\alpha_s \neq \delta \in \Phi(G)$ in $M_s$. Now we consider the discussed relation:
	\allowdisplaybreaks
	\begin{align*}
		\left( \prod\nolimits_{\gamma \in M_{\alpha_s, \alpha}^G} v_{\gamma} \right) v_{\alpha} &= \left( \prod\nolimits_{\gamma \in M_{\alpha_s, \alpha}^G} \left( \prod\nolimits_{\beta \in M_{\alpha_s, \gamma}^G} u_{\beta} \right) u_{\gamma} \right) \left( \prod\nolimits_{\gamma \in M_{\alpha_s, \alpha}^G} u_{\gamma} \right) u_{\alpha} = u_{\alpha}
	\end{align*}
	Thus every relation is mapped to a relation and we have an endomorphism $u_s$ of $M_s$ interchanging $u_{\alpha}$ and $v_{\alpha}$. Since $u_s^2 = \id$, it is an automorphism of $M_s$. Consider $U := \ZZ_2 \ltimes M_s$, where $\ZZ_2$ acts on $M_s$ via $u_s$. Moreover, we denote the generator of $\ZZ_2$ by $u_s$. Then the assignment
	\allowdisplaybreaks
	\begin{align*}
		&u_s \mapsto u_s, &&u_{\alpha} \mapsto u_{\alpha}, &&v_{\alpha} \mapsto u_s u_{\alpha} u_s
	\end{align*}
	extends to a homomorphism $U \to U_+$, since all relations in $U$ do also hold in $U_+$. Now we will show that there does also exist a homomorphism $U_+ \to U$ mapping $u_s$ onto $u_s$ and $u_{\alpha}$ onto $u_{\alpha}$. For this we consider $w\in W$. If $\ell(sw) = \ell(w) +1$, then every relation in $U_w$ is also a relation in $M_s$ and hence in $U$. Thus we obtain a homomorphism $U_w \to U$ mapping $u_{\alpha}$ onto $u_{\alpha}$. Assume that $\ell(sw) = \ell(w) -1$ and let $G \in \mathrm{Min}_s(w)$. By Lemma \ref{Lemma: Definition of UG} $U_w$ is isomorphic to $U_G$ and we have to show that $[u_s, u_{\alpha}] = \prod\nolimits_{\gamma \in M_{\alpha_s, \alpha}^G} u_{\gamma}$ is a relation in $U$. Note that this is a relation if and only if $u_s u_{\alpha} u_s = \left( \prod\nolimits_{\gamma \in M_{\alpha_s, \alpha}^G} u_{\gamma} \right) u_{\alpha}$ is a relation in $U$. But in $U$ we have $u_s u_{\alpha} u_s = v_{\alpha}$ and hence it is a relation by definition. In particular, the mappings $U_w \to U$ preserve the inclusions $U_w \to U_{wt}$ and by the universal property of direct limits there exists a homomorphism $U_+ \to U$. Since both concatenations are the identity on the generating sets, both homomorphisms are isomorphisms. In particular, $M_s$ is isomorphic to $N_s$.
\end{proof}

\begin{lemma}\label{tausV}
	Let $R \in \partial^2 \alpha_s$ and let $\Phi(R) := \{ \alpha \in \Phi_+ \mid R \in \partial^2 \alpha \}$. We define the group $U_R$ via the following presentation
	\[ U_R := \left\langle \{ u_{\alpha} \mid \alpha \in \Phi(R) \} \; \middle| \; \begin{cases*}
		\forall w\in W, G \in \mathrm{Min}_s(w), \alpha, \beta \in \Phi(G) \cap \Phi(R), \alpha \leq_G \beta: \\ \qquad u_{\alpha}^2 = 1, \qquad [u_{\alpha}, u_{\beta}] = \prod\nolimits_{\gamma \in M_{\alpha, \beta}^G} u_{\gamma}
	\end{cases*} \right\rangle \]
	For $N_R := \langle u_{\alpha} \mid \alpha_s \neq \alpha \in \Phi(R) \rangle \leq U_R$ we have $U_R \cong U_s \ltimes N_R$. Furthermore, there exists $\tau_s \in \Aut(N_R)$ such that $\tau_s(u_{\alpha}) = u_{s\alpha}$, and we have $\tau_s^2 = 1 = (u_s \tau_s)^3 $ in $\Aut(N_R)$.
\end{lemma}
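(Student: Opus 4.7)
The plan is to establish the three assertions in turn.

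For the semidirect decomposition $U_R = U_s \ltimes N_R$, I would follow the pattern of Lemma \ref{UplusNsUs} by constructing a retraction $\pi : U_R \to U_s$ sending $u_{\alpha_s} \mapsto u_s$ and $u_\alpha \mapsto 1$ for every $\alpha_s \neq \alpha \in \Phi(R)$. Axiom (CB$2$) together with Example \ref{exprank2rgd} shows that the simple root $\alpha_s$ never occurs inside $M_{\alpha, \beta}^G$ for any prenilpotent pair $\{\alpha, \beta\} \subseteq \Phi(R)$, since the simple roots always sit at the endpoints of a prenilpotent interval in a rank-$2$ residue. Consequently every defining commutator relation of $U_R$ is sent to $1$ and $\pi$ is well defined. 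As $\pi|_{U_s} = \id$, this forces $U_s \cap N_R = 1$; normality of $N_R$ in $U_R$ is immediate from the relations $u_s u_\alpha u_s = \bigl( \prod_{\gamma \in M_{\alpha_s, \alpha}^G} u_\gamma \bigr) u_\alpha \in N_R$, so $U_R = U_s \ltimes N_R$.

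Next, I would define $\tau_s$ on generators by $u_\alpha \mapsto u_{s\alpha}$, which is well-defined since $R$ is stabilized by $r_{\alpha_s}$ and $s$ acts as an involution on $\Phi(R) \setminus \{\alpha_s\}$. To see that $\tau_s$ extends to an endomorphism of $N_R$, it suffices to verify that every defining relation $[u_\alpha, u_\beta] = \prod_{\gamma \in M_{\alpha, \beta}^G} u_\gamma$ (with $G \in \mathrm{Min}_s(w)$, $\alpha \leq_G \beta$ in $\Phi(G) \cap \Phi(R) \setminus \{\alpha_s\}$) is sent to a defining relation. A direct check gives $sG \in \mathrm{Min}_s(sw)$, and the map $\alpha \mapsto s\alpha$ provides a bijection of $\Phi(G) \cap \Phi(R) \setminus \{\alpha_s\}$ onto $\Phi(sG) \cap \Phi(R) \setminus \{\alpha_s\}$ compatible with the orderings; hence Weyl-invariance in the form of Remark \ref{Remark: Weyl-invariance}(b) identifies the image with $[u_{s\alpha}, u_{s\beta}] = \prod_{\gamma' \in M_{s\alpha, s\beta}^{sG}} u_{\gamma'}$, which is among the defining relations. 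Thus $\tau_s$ extends to an endomorphism, and since $\tau_s^2$ acts as the identity on generators, $\tau_s$ is an involutive automorphism; in particular $\tau_s^2 = 1$ in $\Aut(N_R)$.

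The remaining identity $(u_s \tau_s)^3 = 1$ in $\Aut(N_R)$ is where the real work lies. As $R$ is a rank-$2$ spherical residue of type $\{s, t\}$ with $m_{st} \in \{3, 4, 6\}$, the set $\Phi(R)$ has cardinality $m_{st}$ and every commutator inside $U_R$ is pinned down explicitly by (CB$2$) and Example \ref{exprank2rgd}. My plan is to verify $(u_s \tau_s)^3(u_\alpha) = u_\alpha$ on each of the $m_{st}-1$ generators of $N_R$ by direct computation, expanding $(u_s \tau_s)(u_\alpha) = u_s u_{s\alpha} u_s = \bigl( \prod_{\gamma \in M_{\alpha_s, s\alpha}^G} u_\gamma \bigr) u_{s\alpha}$ and iterating twice, using only the rank-$2$ commutators to simplify. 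Conceptually, this realises $\langle u_s, \tau_s\rangle$ acting on $N_R$ as the copy of $\mathrm{SL}_2(\FF_2) \cong \Sym(3)$ one expects in the rank-$1$ parabolic of the Moufang polygon of type $\{s, t\}$ over $\FF_2$. The main bookkeeping obstacle lies in the $G_2$ case, where five non-trivial commutation relations interact and the prescribed direction of the edge $\{s, t\}$ enters explicitly through (CB$2$).
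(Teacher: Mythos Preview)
Your approach is essentially the same as the paper's: retraction $\pi$ for the semidirect product, Weyl-invariance for the existence of $\tau_s$, and a type-by-type verification of $(u_s\tau_s)^3=1$ using (CB2). Two small points to fix.

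First, when you write ``it suffices to verify that every defining relation of $N_R$ is sent to a defining relation'', you are implicitly assuming that $N_R$ is presented by the generators $u_\alpha$ ($\alpha\in\Phi(R)\setminus\{\alpha_s\}$) and the commutator relations among them. This is true but not automatic: $N_R$ is defined as a \emph{subgroup} of $U_R$, not by a presentation. The paper fills this gap by a counting argument. One first observes $[\alpha,\beta]\subseteq\Phi(R)$ for all $\alpha,\beta\in\Phi(R)$, so every element of $U_R$ has a normal form $\prod_j u_{\beta_j}^{\epsilon_j}$; choosing $w$ with $\Phi(R)\subseteq\Phi(G)$ for some $G\in\mathrm{Min}_s(w)$ and using (CB3), these normal forms are distinct already in $U_w$, whence $|U_R|=2^{|\Phi(R)|}$. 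The same argument shows the ``obvious'' presentation $\tilde N_R$ (delete $u_{\alpha_s}$ and all relations involving it) has $|\tilde N_R|\le 2^{|\Phi(R)|-1}=|N_R|$, forcing $\tilde N_R\cong N_R$. Only then is your relation-check for $\tau_s$ legitimate.

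Second, you restrict to $m_{st}\in\{3,4,6\}$, but $R\in\partial^2\alpha_s$ can perfectly well have type $A_1\times A_1$ (i.e.\ $m_{st}=2$): then $\Phi(R)=\{\alpha_s,\beta\}$ with $s\beta=\beta$ and $[u_s,u_\beta]=1$, so $(u_s\tau_s)^3(u_\beta)=u_\beta$ is immediate. The paper includes this case; you should too.
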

\begin{proof}
	Note first that $[\alpha, \beta] \subseteq \Phi(R)$ for all $\alpha, \beta \in \Phi(R)$. Hence the relations of $U_R$ make sense. Similar as in Lemma \ref{UplusNsUs} we deduce $U_R \cong U_s \ltimes N_R$. Suppose $w\in W$ with $\ell(sw) = \ell(w) -1$ and let $G \in \mathrm{Min}_s(w)$ be such that $\Phi(R) \subseteq \Phi(G)$. Then each element of $U_R$ can be written in the form $\prod\nolimits_{j=1}^m u_{\beta_j}^{\epsilon_j}$, where $\epsilon_j \in \{0, 1\}$ and $\{ \beta_1 = \alpha_s \leq_G \cdots \leq_G \beta_m \} = \Phi(R) \subseteq \Phi(G)$. Since we have a homomorphism $U_R \to U_+$ and the image of $U_R$ is contained in $U_w$, (CB$3$) implies that $U_R \to U_+$ is a monomorphism. 
	
	Let $\tilde{N}_R$ be the group given by the presentation of $U_R$ by deleting the generator $u_{\alpha_s}$ and all relations in which $u_{\alpha_s}$ appears. Then again each element in $\tilde{N}_R$ can be written in the form $\prod\nolimits_{j=2}^m u_{\beta_j}^{\epsilon_j}$. Since we have a homomorphism $\tilde{N}_R \to U_R$ with image $N_R$, the cardinality of $N_R$ implies that this homomorphism must be an isomorphism. In particular, $\tilde{N}_R$ yields a presentation of $N_R$.
	
	Now we will show that the assignment $u_{\alpha} \mapsto u_{s\alpha}$ extends to an endomorphism of $N_R$. First of all we note that for $\alpha_s \neq \alpha \in \Phi(R)$ we have $\alpha_s \neq s\alpha \in \Phi(R)$. We have to check that every relation is mapped to a relation. We consider the two different types of relations and note that $u_{\alpha}^2 = 1$ is obvious. Suppose $w\in W, G\in \mathrm{Min}_s(w)$ and $\alpha, \beta \in \left( \Phi(G) \cap \Phi(R) \right) \backslash \{ \alpha_s \}$ with $\alpha \leq_G \beta$. Using the Weyl-invariance and the fact that $[u_{s\alpha}, u_{s\beta}] = \prod\nolimits_{\gamma \in M_{s\alpha, s\beta}^{sG}} u_{\gamma}$ is a relation in $N_R$, we deduce as in Remark \ref{Remark: Weyl-invariance}$(b)$ that
	\allowdisplaybreaks
	\begin{align*}
		[u_{s\alpha}, u_{s\beta}] &= \prod\nolimits_{\gamma \in M_{\alpha, \beta}^G} u_{s\gamma}
	\end{align*}
	is also a relation in $N_R$. Thus $\tau_s: N_R \to N_R, u_{\alpha} \mapsto u_{s\alpha}$ is an endomorphism. Since $\tau_s^2 = 1$, we infer $\tau_s \in \Aut(N_R)$.	It is left to show that $\left( u_s \tau_s \right)^3 = 1$ holds in $\Aut(N_R)$. Therefore, we do a case distinction on the type of the residue $R$ (we will write for short $f.u_{\beta} := f(u_{\beta})$):
	\begin{itemize}[label=$\bullet$]
		\item $A_1 \times A_1$: Let $\Phi(R) = \{ \alpha_s, \beta \}$. Then $s\beta = \beta$. Since $u_s, u_{\beta}$ commute by (CB$2$), Example \ref{exprank2rgd} and the Weyl-invariance, we obtain 
		\[ (u_s \tau_s)^3.u_{\beta} = (u_s \tau_s)^2. [u_s, u_{\beta}] u_{\beta} = (u_s \tau_s)^2.u_{\beta} = u_{\beta} \]
		
		\item $A_2$: Let $\Phi(R) = \{ \alpha_s, \delta, \epsilon \}$. Then $s\epsilon = \delta$ and we assume that $\{ \alpha_s, \epsilon \}$ is a set of \emph{simple roots of $R$} (i.e.\ $\Phi(R) \subseteq [\alpha_s, \epsilon]$). Using (CB$2$), Example \ref{exprank2rgd} and the Weyl-invariance, we obtain the following:
		\begin{align*}
			(u_s \tau_s)^3.u_{\epsilon} &= (u_s \tau_s)^2. u_{\delta} = (u_s \tau_s). u_{\delta}u_{\epsilon} = u_{\epsilon} \\
			(u_s \tau_s)^3.u_{\delta} &= u_s \tau_s. u_{\epsilon} = u_{\delta}
		\end{align*}
		
		\item $B_2 = C_2$: Let $\Phi(R) = \{ \alpha_s, \delta, \gamma, \epsilon \}$ and assume that $\{ \alpha_s, \epsilon \}$ is a set of simple roots of $R$. Furthermore, we assume that $s\gamma = \gamma$ and $s\epsilon = \delta$. Using (CB$2$), Example \ref{exprank2rgd} and the Weyl-invariance, we obtain that only $u_s$ and $u_{\epsilon}$ do not commute. We compute the following:
		\begin{align*}
			(u_s \tau_s)^3. u_{\gamma} &= (u_s \tau_s)^2. u_{\gamma} = u_{\gamma} \\
			(u_s \tau_s)^3. u_{\epsilon} &= (u_s \tau_s)^2. u_{\delta} = u_s \tau_s. u_{\delta}u_{\gamma}u_{\epsilon} = u_{\epsilon} \\
			(u_s \tau_s)^3. u_{\delta} &= u_s \tau_s. u_{\epsilon} = u_{\delta}
		\end{align*}
		
		\item $G_2$: Let $\Phi(R) = \{ \beta_1, \ldots, \beta_6 \}$ and we assume that $\{ \beta_1, \beta_6 \}$ is a set of simple roots of $R$ and that the roots are ordered via their indices. Assume first that $\alpha_s = \beta_1$. Then $s\beta_2 = \beta_6, s\beta_3 = \beta_5$ and $s\beta_4 = \beta_4$. Let $u_i := u_{\beta_i}$. Using (CB$2$), Example \ref{exprank2rgd} and the Weyl-invariance, we obtain
		\allowdisplaybreaks
		\begin{align*}
			(u_s \tau_s)^3. u_4 &= (u_s \tau_s)^2. u_4 = u_4 \\
			(u_s \tau_s)^3. u_6 &= (u_s \tau_s)^2. u_2 = u_s \tau_s. [u_1, u_6] u_6 = u_s\tau_s. u_2 u_3 u_4 u_5 u_6 \\
			&= [u_1, u_6] u_6 [u_1, u_5] u_5 [u_1, u_4] u_4 [u_1, u_3] u_3 [u_1, u_2] u_2 \\
			&= u_2 u_3 u_4 u_5 u_6 u_2 u_4 u_5 u_4 u_2 u_3 u_2 = u_2 u_3 u_4 u_6 u_3 u_2 = u_6 \\
			(u_s \tau_s)^3. u_2 &= u_s \tau_s. u_6 = u_2 \\
			(u_s \tau_s)^3. u_5 &= (u_s \tau_s)^2. [u_1, u_3] u_3 = (u_s \tau_s)^2. u_2 u_3 \\
			&= u_s \tau_s. [u_1, u_6] u_6 [u_1, u_5] u_5 \\
			&= u_s \tau_s. u_2 u_3 u_4 u_5 u_6 u_2 u_4 u_5 = u_s \tau_s. u_3 u_4 u_6 \\
			&= [u_1, u_5]u_5 [u_1, u_4] u_4 [u_1, u_2] u_2 = u_2 u_4 u_5 u_4 u_2 = u_5 \\
			(u_s \tau_s)^3. u_3 &= (u_s \tau_s)^2. [u_1, u_5] u_5 = (u_s \tau_s)^2. u_2 u_4 u_5 = u_6 u_4 u_3 u_4 u_6 = u_3
		\end{align*}
		It is also possible that $\alpha_s = \beta_6$. In this case $s\beta_1 = \beta_5, s\beta_2 = \beta_4$ and $s\beta_3 = \beta_3$ and we compute the following:
		\allowdisplaybreaks
		\begin{align*}
			(u_s \tau_s)^3. u_3 &= (u_s \tau_s)^2. u_3 = u_3 \\
			(u_s \tau_s)^3. u_1 &= (u_s \tau_s)^2. u_5 = u_s \tau_s. u_1 [u_1, u_6] = u_s \tau_s. u_1 u_2 u_3 u_4 u_5 \\
			&= u_5 [u_5, u_6] u_4 [u_4, u_6] u_3 [u_3, u_6] u_2 [u_2, u_6] u_1 [u_1, u_6] \\
			&= u_5 u_4 u_3 u_2 u_4 u_1 u_2 u_3 u_4 u_5 = u_5 u_4 u_1 u_2 u_5 = u_4 u_1 [u_1, u_5] u_2 = u_1 \\			
			(u_s \tau_s)^3. u_5 &= u_s \tau_s. u_1 = u_5 \\
			(u_s \tau_s)^3. u_2 &= (u_s \tau_s)^2. u_4 [u_4, u_6] = (u_s \tau_s)^2. u_4 \\
			&= u_s \tau_s. u_2 [u_2, u_6] = u_s \tau_s. u_2 u_4 \\
			&= u_4 [u_4, u_6] u_2 [u_2, u_6] = u_4 u_2 u_4 = u_2 \\
			(u_s \tau_s)^3. u_4 &= u_s \tau_s. u_2 = u_4 [u_4, u_6] = u_4 \qedhere
		\end{align*}
	\end{itemize}
\end{proof}

\begin{lemma}\label{Lemma: ustaus V}
	Let $R \in \partial^2 \alpha_s$ and let $\alpha_s \neq \alpha \in \Phi(R)$. Let $G \in \mathrm{Min}_s(w)$ be a minimal gallery with $\Phi(R) \subseteq \Phi(G)$ for some $w\in W$. Then the following hold in $N_R$:
	\[ \left( \prod\nolimits_{\gamma \in M_{\alpha_s, s\alpha}^G} u_{s\gamma} \right) u_{\alpha} = \left( \prod\nolimits_{\gamma \in M_{\alpha_s, \alpha}^G} \left( \prod\nolimits_{\gamma' \in M_{\alpha_s, s\gamma}^G} u_{\gamma'} \right) u_{s\gamma} \right) \left( \prod\nolimits_{\gamma \in M_{\alpha_s, s\alpha}^G} u_{\gamma} \right) u_{s\alpha} \]
\end{lemma}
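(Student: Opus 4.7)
The plan is to derive the identity by applying the braid relation $(u_s \tau_s)^3 = 1$ from Lemma \ref{tausV} to the element $u_{s\alpha}$ of $N_R$. Since $u_s$ and $\tau_s$ are both involutions in $\Aut(N_R)$, the cubic relation rewrites as $(u_s \tau_s)^2 = (u_s \tau_s)^{-1} = \tau_s u_s$; evaluating both of these equal automorphisms at $u_{s\alpha}$ should produce the two sides of the claim.

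Before the computation I would record two preliminary observations. First, since $R$ is a rank $2$ spherical residue of type $\{s,t\}$ and $\alpha \in \Phi(R)$, the reflection $r_{s\alpha} = s r_\alpha s$ also stabilizes $R$, so $s\alpha \in \Phi(R)$; and $\alpha \neq \alpha_s$ forces $s\alpha \in \Phi_+$, while $\Phi(R) \subseteq \Phi(G)$ places both $\alpha$ and $s\alpha$ in $\Phi(G)$ with $\alpha_s \leq_G \alpha$ and $\alpha_s \leq_G s\alpha$ (as $\alpha_s$ is the first root crossed by $G \in \mathrm{Min}_s(w)$). Second, these inclusions guarantee that $[u_s, u_\alpha] = \prod_{\gamma \in M_{\alpha_s, \alpha}^G} u_\gamma$ and $[u_s, u_{s\alpha}] = \prod_{\gamma \in M_{\alpha_s, s\alpha}^G} u_\gamma$ are defining relations in $U_R$, so the conjugation action of $u_s$ on $N_R$ satisfies $u_s . u_\alpha = \bigl(\prod_{\gamma \in M_{\alpha_s, \alpha}^G} u_\gamma\bigr) u_\alpha$ and $u_s . u_{s\alpha} = \bigl(\prod_{\gamma \in M_{\alpha_s, s\alpha}^G} u_\gamma\bigr) u_{s\alpha}$.

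For the left hand side I compute $(\tau_s u_s) . u_{s\alpha} = \tau_s\bigl(u_s . u_{s\alpha}\bigr) = \tau_s\bigl(\bigl(\prod_{\gamma \in M_{\alpha_s, s\alpha}^G} u_\gamma\bigr) u_{s\alpha}\bigr) = \bigl(\prod_{\gamma \in M_{\alpha_s, s\alpha}^G} u_{s\gamma}\bigr) u_\alpha$, using that $\tau_s$ is a homomorphism sending $u_\gamma$ to $u_{s\gamma}$ and $u_{s\alpha}$ to $u_\alpha$. For the right hand side I evaluate $(u_s \tau_s)^2 . u_{s\alpha}$ step by step: first $\tau_s(u_{s\alpha}) = u_\alpha$; then $u_s . u_\alpha = \bigl(\prod_{\gamma \in M_{\alpha_s, \alpha}^G} u_\gamma\bigr) u_\alpha$; then $\tau_s$ sends this to $\bigl(\prod_{\gamma \in M_{\alpha_s, \alpha}^G} u_{s\gamma}\bigr) u_{s\alpha}$; finally, applying the conjugation automorphism $u_s$ and using that it distributes over products, each factor $u_{s\gamma}$ becomes $\bigl(\prod_{\gamma' \in M_{\alpha_s, s\gamma}^G} u_{\gamma'}\bigr) u_{s\gamma}$ and the factor $u_{s\alpha}$ becomes $\bigl(\prod_{\gamma \in M_{\alpha_s, s\alpha}^G} u_\gamma\bigr) u_{s\alpha}$, giving precisely the right hand side of the claim.

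Equating $(\tau_s u_s) . u_{s\alpha} = (u_s \tau_s)^2 . u_{s\alpha}$ then yields the identity. There is no real obstacle: the statement is the braid relation $(u_s \tau_s)^3 = 1$ rewritten as an equation among specific elements of $N_R$, and the only care required is bookkeeping to keep each product ordered by $\leq_G$. This is automatic because $\tau_s$ merely relabels generators, and the expansion of $u_s . u_{s\gamma}$ via the commutator relation is, by definition, written in the order prescribed by $\leq_G$ on the relevant set $M_{\alpha_s, s\gamma}^G$.
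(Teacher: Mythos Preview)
Your proof is correct and essentially identical to the paper's: both derive the identity from the braid relation $(u_s\tau_s)^3 = 1$ in $\Aut(N_R)$ established in Lemma~\ref{tausV}, the only cosmetic difference being that the paper evaluates the equivalent identity $\tau_s u_s \tau_s = u_s \tau_s u_s$ at $u_\alpha$, whereas you evaluate $\tau_s u_s = (u_s\tau_s)^2$ at $u_{s\alpha}$. Since $\tau_s(u_\alpha) = u_{s\alpha}$, the two computations are literally the same sequence of steps.
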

\begin{proof}
	This follows from the previous lemma and the fact that the left hand side is equal to $\tau_s u_s \tau_s(u_{\alpha})$ and the right hand side is equal to $u_s \tau_s u_s (u_{\alpha})$.
\end{proof}

\begin{lemma}\label{Lemma: taus independent of gallery}
	Suppose $w, w'\in W$ with $\ell(sw) = \ell(w) -1$ and $\ell(sw') = \ell(w')-1$. Let $G \in \mathrm{Min}_s(w), H \in \mathrm{Min}_s(w')$ and let $\alpha_s \neq \alpha \in \Phi(G) \cap \Phi(H)$. Then the following hold in $M_s$:
	\begin{enumerate}[label=(\alph*)]
		\item $\prod\nolimits_{\gamma \in M_{\alpha_s, \alpha}^G} u_{s\gamma} = \prod\nolimits_{\gamma \in M_{\alpha_s, \alpha}^H} u_{s\gamma}$;
		
		\item $\prod\nolimits_{\gamma \in M_{\alpha_s, \alpha}^G} v_{s\gamma} = \prod\nolimits_{\gamma \in M_{\alpha_s, \alpha}^H} v_{s\gamma}$.
	\end{enumerate}
\end{lemma}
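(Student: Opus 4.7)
Parts (a) and (b) are equivalent via the involutive automorphism of $M_s$ from Lemma \ref{Lemma: Presentation of Ns}, which exchanges $u_\beta$ and $v_\beta$ for each positive $\beta \neq \alpha_s$; it interchanges the products appearing in (a) and (b), so it suffices to prove (a). The starting point is the observation, immediate from the defining relations of $M_s$, that the element $\prod_{\gamma \in M_{\alpha_s,\alpha}^G} u_\gamma$ equals $v_\alpha u_\alpha \in M_s$ and is therefore already independent of $G$; our task is to establish the analogous independence after replacing each $u_\gamma$ by $u_{s\gamma}$. This cannot be obtained by formally ``applying $s$'' to the defining relation, because the Weyl-invariance of $\mathcal{M}$ is only available for pairs of roots not involving the simple root $\alpha_s$.

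I would first reduce to the case where $G$ and $H$ end at the same chamber. Since $\{\alpha_s,\alpha\}$ is prenilpotent the set $[\alpha_s,\alpha]$ is finite, and by (CB$1$) I may replace $G$ and $H$ by extensions lying in a common $\mathrm{Min}_s(w)$, for a suitably chosen $w \in W$ with $\ell(sw)=\ell(w)-1$ and $[\alpha_s,\alpha] \subseteq \Phi(w)$, without altering $M_{\alpha_s,\alpha}^G$ or $M_{\alpha_s,\alpha}^H$. Any two elements of $\mathrm{Min}_s(w)$ are linked by a chain of elementary braid moves supported in spherical rank-$2$ residues and not touching the initial $s$-step, so it is enough to check invariance of the product $\prod_{\gamma \in M_{\alpha_s,\alpha}^G} u_{s\gamma}$ under a single such braid move.

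For a braid move supported in a spherical rank-$2$ residue $R'$, the contributions from roots of $M_{\alpha_s,\alpha}^G$ outside $\Phi(R')$ are unaffected and one is reduced to a rank-$2$ computation. The identity proved in Lemma \ref{Lemma: ustaus V} inside $N_R$ (for a spherical rank-$2$ residue $R \in \partial^2\alpha_s$ adapted to the situation) expresses the product of the $u_{s\gamma}$'s in terms of products of the form $\prod u_\gamma$ and the elements $v_{s\gamma}$, all of which are controlled by the already established invariance $v_\alpha u_\alpha = \prod_{\gamma \in M_{\alpha_s,\alpha}^G} u_\gamma$. Weyl-invariance, applied to the reflected gallery $sG$, asserts that the commutation relations among the $u_{s\beta}$'s for $\beta \neq \alpha_s$ mirror those among the $u_\beta$'s; this exactly matches the braid-move invariance of $\prod u_\gamma$ against the reordering induced by the braid move. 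Faithfulness of $\mathcal{M}$ lets one transfer the resulting equalities from $N_R \hookrightarrow U_+$ back into $M_s$ via the isomorphism $M_s \cong N_s \leq U_+$.

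The main obstacle is the combinatorial bookkeeping in the braid-move step: a single braid move inside $R'$ may alter the subset $M_{\alpha_s,\alpha}^G \cap \Phi(R')$ itself, not merely its ordering, and one must verify that the corresponding change in $\prod u_{s\gamma}$ is absorbed by the rank-$2$ commutation relations. Lemma \ref{Lemma: ustaus V} is precisely the tool designed to control this: it encodes the identity $\tau_s u_s \tau_s = u_s \tau_s u_s$ in $\Aut(N_R)$, and applying it systematically to the affected generators translates the required invariance of $\prod u_{s\gamma}$ into the already-known invariance of $\prod u_\gamma$. Once this verification has been carried out across all braid moves the claim (a) follows, and (b) follows by the equivalence noted at the outset.
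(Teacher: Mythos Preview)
Your reduction of (b) to (a) via the automorphism $u_s\in\Aut(M_s)$ is correct and matches the paper. You also correctly identify the starting point: the defining relations of $M_s$ give
\[
\prod_{\gamma\in M_{\alpha_s,\alpha}^G} u_\gamma \;=\; v_\alpha u_\alpha \;=\; \prod_{\gamma\in M_{\alpha_s,\alpha}^H} u_\gamma
\quad\text{in }M_s,
\]
so this product is already gallery-independent.

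Where you go astray is the next sentence: ``This cannot be obtained by formally `applying $s$' to the defining relation.'' In fact it can, and that is exactly what the paper does. The equality above, transported along $M_s\cong N_s\le U_+$, lies in $U_w$ by faithfulness (since $[\alpha_s,\alpha]\subseteq\Phi(G)=\Phi(w)$), and both sides actually sit in the subgroup $V_{w,s}$ because no factor equals $u_{\alpha_s}$. Now Lemma~\ref{Lemma: V_{w, s} to U_{sw} isomorphism}(b) supplies precisely the ``apply $s$'' map you thought was unavailable: it is an \emph{isomorphism} $V_{w,s}\to U_{sw}$ sending $u_\beta\mapsto u_{s\beta}$, built from Weyl-invariance applied only to pairs not involving $\alpha_s$. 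Pushing the equality through this isomorphism yields
\[
\prod_{\gamma\in M_{\alpha_s,\alpha}^G} u_{s\gamma} \;=\; \prod_{\gamma\in M_{\alpha_s,\alpha}^H} u_{s\gamma}
\quad\text{in }U_{sw},
\]
and the canonical homomorphism $U_{sw}\to M_s$ finishes the proof. No braid-move analysis is needed.

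Your proposed route through braid moves and Lemma~\ref{Lemma: ustaus V} is much heavier and, as written, has a mismatch: the braid moves linking two galleries in $\mathrm{Min}_s(w)$ (which fix the initial $s$-step) take place in rank-$2$ residues $R'$ \emph{not} containing $\alpha_s$, whereas Lemma~\ref{Lemma: ustaus V} is stated for residues $R\in\partial^2\alpha_s$. You never explain how the identity in $N_R$ controls what happens across a braid move in $R'$, and your claim that ``contributions from roots outside $\Phi(R')$ are unaffected'' is not obviously true for the $u_{s\gamma}$-product, since a braid move can change the \emph{set} $M_{\alpha_s,\alpha}^G$, not just the ordering. The paper's three-line argument sidesteps all of this.
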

\begin{proof}
	Assertion $(b)$ is a direct consequence of Assertion $(a)$ and the fact that $u_s$ is an automorphism of $M_s$ interchanging $u_{\alpha}$ and $v_{\alpha}$. Thus it suffices to show Assertion $(a)$. By definition we have the following two equations in $M_s$:
	\[ \left( \prod\nolimits_{\gamma \in M_{\alpha_s, \alpha}^G} u_{\gamma} \right) u_{\alpha} = v_{\alpha} = \left( \prod\nolimits_{\gamma \in M_{\alpha_s, \alpha}^H} u_{\gamma} \right) u_{\alpha} \]
	Using Lemma \ref{Lemma: Presentation of Ns} we infer that $\prod\nolimits_{\gamma \in M_{\alpha_s, \alpha}^G} u_{\gamma} = \prod\nolimits_{\gamma \in M_{\alpha_s, \alpha}^H} u_{\gamma}$ is a relation in $N_s \leq U_+$. We remark that $[\alpha_s, \alpha] \subseteq \Phi(G) \cap \Phi(H)$. Using the fact that $U_w \to U_+$ is injective and both sides of the relation are contained in $U_w$, we deduce that it is also a relation in $U_w$. Moreover, both sides are contained in the subgroup $V_{w, s} \leq U_w$ and Lemma \ref{Lemma: V_{w, s} to U_{sw} isomorphism} yields that
	\[ \prod\nolimits_{\gamma \in M_{\alpha_s, \alpha}^G} u_{s\gamma} = \prod\nolimits_{\gamma \in M_{\alpha_s, \alpha}^H} u_{s\gamma} \]
	is a relation in $U_{sw}$. As $U_{sw} \to M_s$ is a homomorphism, the claim follows.
\end{proof}

\begin{proposition}\label{Proposition: Existence of taus}
	There exists an endomorphism $\tau_s :N_s \to N_s$ such that $\tau_s(u_{\alpha}) = u_{s\alpha}$ holds for each $\alpha_s \neq \alpha \in \Phi_+$ and $\tau_s(u_s u_{\beta} u_s) = u_s \left( \prod\nolimits_{\gamma \in M_{\alpha_s, s\beta}^G} u_{s\gamma} \right) u_{\beta} u_s$ holds for each $-\alpha_s \subseteq \beta \in \Phi_+$, where $w\in W$ is such that $\ell(sw) = \ell(w) -1$ and $G \in \mathrm{Min}_s(w)$ with $s\beta \in \Phi(G)$.
\end{proposition}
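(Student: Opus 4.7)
The plan is to use the presentation of $M_s$ from Lemma \ref{Lemma: Presentation of Ns} (which identifies $M_s \cong N_s$ via $u_\alpha \mapsto u_\alpha$, $v_\alpha \mapsto u_s u_\alpha u_s$) and to construct $\tau_s$ as an endomorphism of $M_s$ by prescribing it on generators and verifying that each defining relation is preserved. On the $u$-generators I set $\tau_s(u_\alpha) := u_{s\alpha}$; this is valid since $\alpha \in \Phi_+ \setminus \{\alpha_s\}$ forces $s\alpha \in \Phi_+ \setminus \{\alpha_s\}$. The definition on the $v$-generators requires a case analysis according to whether $\beta$ admits a minimal gallery $G \in \mathrm{Min}_s(w)$ (for some $w$ with $\ell(sw) = \ell(w)-1$) with $\beta \in \Phi(G)$; a short check shows this is equivalent to $-\alpha_s \not\subseteq \beta$ (one direction picks $w \in -\alpha_s \cap -\beta$ and prepends the step $1 \to s$; the other is immediate).

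In the case $-\alpha_s \not\subseteq \beta$, relation (iii) of $M_s$ gives $v_\beta = \bigl(\prod_{\gamma \in M_{\alpha_s, \beta}^G} u_\gamma\bigr) u_\beta$, so I am forced to put $\tau_s(v_\beta) := \bigl(\prod_\gamma u_{s\gamma}\bigr) u_{s\beta}$; independence of $G$ is precisely Lemma \ref{Lemma: taus independent of gallery}(a). In the remaining case $-\alpha_s \subseteq \beta$, relation (iii) does not apply to $\beta$ itself, but the reflected root $s\beta$ satisfies $\alpha_s \subsetneq s\beta$ and therefore admits a gallery $G \in \mathrm{Min}_s(w)$ with $s\beta \in \Phi(G)$; I then set $\tau_s(v_\beta) := \bigl(\prod_{\gamma \in M_{\alpha_s, s\beta}^G} v_{s\gamma}\bigr) v_\beta$, well-defined by Lemma \ref{Lemma: taus independent of gallery}(b). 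Unfolding $v_{s\gamma} = u_s u_{s\gamma} u_s$ and $v_\beta = u_s u_\beta u_s$ inside $N_s$, this second formula becomes $u_s \bigl(\prod_\gamma u_{s\gamma}\bigr) u_\beta u_s$, matching the expression in the statement.

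What remains is to verify that every defining relation of $M_s$ is mapped by $\tau_s$ to an identity valid in $M_s$. The involutive relations $u_\alpha^2 = 1$ are preserved trivially; the $u$-commutator relations indexed by $G \in \mathrm{Min}_s(w)$ and $\alpha \leq_G \beta$ in $\Phi(G) \setminus \{\alpha_s\}$ are carried to the analogous relations indexed by $sG \in \mathrm{Min}(sw)$ and $s\alpha \leq_{sG} s\beta$, which hold by Weyl-invariance of $\mathcal{M}$ (cf.\ Remark \ref{Remark: Weyl-invariance}); the relations of type (iii) are preserved in the first case by construction and do not occur in the second. The main obstacle I expect is the preservation of the $v$-relations $v_\alpha^2 = 1$ and $[v_\alpha, v_\beta] = \prod_\gamma v_\gamma$, since $\tau_s(v_\bullet)$ takes two distinct shapes and an identity to check may mix them. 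My plan is to pull each such identity back to $N_s \leq U_+$: faithfulness of $\mathcal{M}$ embeds $U_w$ and $U_{sw}$ into $U_+$, and the required identity then reduces to the conjugation by $u_s$ of a commutator identity already holding in $U_{sw}$, the equality of commutator sets being supplied once more by Weyl-invariance together with Lemma \ref{Lemma: taus independent of gallery}.
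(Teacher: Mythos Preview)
Your overall strategy coincides with the paper's: work in the presentation $M_s$, define $\tau_s$ on $u_\alpha$ by $u_\alpha\mapsto u_{s\alpha}$, and on $v_\alpha$ by a case distinction according to whether $\{\alpha_s,\alpha\}\in\mathcal P$ or $-\alpha_s\subseteq\alpha$, then verify the defining relations. Your treatment of the relations $u_\alpha^2=1$, the $u$-commutators (via Weyl-invariance and $sG$), and relation~(iii) is correct and matches the paper.

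There is, however, a genuine gap in your handling of the $v$-commutator relations $[v_\alpha,v_\beta]=\prod_\gamma v_\gamma$ indexed by $G\in\mathrm{Min}_s(w)$ with $\ell(sw)=\ell(w)+1$. In that situation $\alpha_s\notin\Phi(G)$, and the roots $\delta\in\Phi(G)$ split into those with $-\alpha_s\subseteq\delta$ (second formula for $\tau_s(v_\delta)$, a product of $v$'s) and those with $o(r_{\alpha_s}r_\delta)<\infty$ (first formula, a product of $u$'s). Your proposed reduction ``conjugation by $u_s$ of a commutator identity in $U_{sw}$'' only works once \emph{all} the $\tau_s(v_\delta)$ are written uniformly as $\bigl(\prod_{\gamma\in M_{\alpha_s,s\delta}^{sG}}v_{s\gamma}\bigr)v_\delta$; otherwise the mixed $u$/$v$ expressions do not factor through a single $u_s$-conjugation. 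Neither Weyl-invariance nor Lemma~\ref{Lemma: taus independent of gallery} gives this unification: what is needed is precisely Lemma~\ref{Lemma: ustaus V}, the rank-$2$ identity $\tau_s u_s\tau_s=u_s\tau_s u_s$ in $N_R$, which shows that for $\delta$ with $o(r_{\alpha_s}r_\delta)<\infty$ the first formula $\bigl(\prod_{\gamma\in M_{\alpha_s,\delta}^H}u_{s\gamma}\bigr)u_{s\delta}$ coincides in $M_s$ with $\bigl(\prod_{\gamma\in M_{\alpha_s,s\delta}^{sG}}v_{s\gamma}\bigr)v_\delta$. Once you insert this step, your reduction goes through exactly as in the paper (apply $u_s\in\Aut(M_s)$ to the identity already established in the case $\ell(sw)=\ell(w)-1$).
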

\begin{proof}
	We will construct an endomorphism $\tau_s: M_s \to M_s$ and show that the induced endomorphism on $N_s$ is as required. First of all we will show that the following assignments (call it $\tau_s$) extend to an endomorphism of $M_s$, where $w\in W$ is such that $\ell(sw) = \ell(w) -1$:
	\allowdisplaybreaks
	\begin{align*}
		\forall \alpha_s \neq \alpha \in \Phi_+: u_{\alpha} &\mapsto u_{s\alpha} \\
		\forall \{\alpha_s, \alpha\} \in \P: v_{\alpha} &\mapsto \left( \prod\nolimits_{\gamma \in M_{\alpha_s, \alpha}^G} u_{s\gamma} \right) u_{s\alpha} &&\text{where } (G, \alpha_s, \alpha) \in \mathcal{I}, \, G \in \mathrm{Min}_s(w) \\
		-\alpha_s \subseteq \alpha: v_{\alpha} &\mapsto \left( \prod\nolimits_{\gamma \in M_{\alpha_s, s\alpha}^G} v_{s\gamma} \right) v_{\alpha} &&\text{where } (G, \alpha_s, s\alpha) \in \mathcal{I}, \, G \in \mathrm{Min}_s(w)
	\end{align*}
	 We remark that by Lemma \ref{Lemma: taus independent of gallery} the assignments do neither depend on $w\in W$ with $\ell(sw) = \ell(w) -1$ nor on the gallery $G \in \mathrm{Min}_s(w)$. We distinguish all relations:
	\begin{enumerate}[label=(\roman*)]
		\item $u_{\alpha}^2 = 1$: There is nothing to show.
		
		\item $v_{\alpha}^2 = 1$: We distinguish the following cases:
		\allowdisplaybreaks
		\begin{enumerate}[label=(\alph*)]
			\item $\{ \alpha_s, \alpha \} \in \P$: Suppose $w\in W$ with $\ell(sw) = \ell(w) -1$ and $G \in \mathrm{Min}_s(w)$ with $\alpha_s, \alpha \in \Phi(G)$. Then we have $\left( \left( \prod\nolimits_{\gamma \in M_{\alpha_s, \alpha}^G} u_{\gamma} \right) u_{\alpha} \right)^2 = \left( [u_s, u_{\alpha}] u_{\alpha} \right)^2 =1$ in $U_w$ and hence in $V_{w, s}$. This implies that
			\[ \left( \left( \prod\nolimits_{\gamma \in M_{\alpha_s, \alpha}^G} u_{s\gamma} \right) u_{s\alpha} \right)^2 \]
			is a relation in $U_{sw}$ by Lemma \ref{Lemma: V_{w, s} to U_{sw} isomorphism} and, using the homomorphism $U_{sw} \to M_s$, hence also in $M_s$. But this is exactly the image of $v_{\alpha}^2$ under the assignment $\tau_s$.
			
			\item $-\alpha_s \subseteq \alpha$: Suppose $w\in W$ with $\ell(sw) = \ell(w) -1$ and $G \in \mathrm{Min}_s(w)$ with $\alpha_s, s\alpha \in \Phi(G)$. We have to show that 
			\[ \left( \left( \prod\nolimits_{\gamma \in M_{\alpha_s, s\alpha}^G} v_{s\gamma} \right) v_{\alpha} \right)^2 \]
			is a relation in $M_s$. Clearly, $\alpha_s \neq s\alpha \in \Phi_+$ and $v_{s\alpha}^2$ is a relation by definition. Using Case $(a)$, we already know that
			\[ \left( \left( \prod\nolimits_{\gamma \in M_{\alpha_s, s\alpha}^G} u_{s\gamma} \right) u_{\alpha} \right)^2 \]
			is a relation in $M_s$. Since $u_s$ is an automorphism of $M_s$ interchanging $u_{\alpha}$ and $v_{\alpha}$ by Lemma \ref{Lemma: Presentation of Ns}, we obtain the relation
			\[ 1 = u_s \left( \left( \left( \prod\nolimits_{\gamma \in M_{\alpha_s, s\alpha}^G} u_{s\gamma} \right) u_{\alpha} \right)^2 \right) = \left( \left( \prod\nolimits_{\gamma \in M_{\alpha_s, s\alpha}^G} v_{s\gamma} \right) v_{\alpha} \right)^2  \]
		\end{enumerate}
		
		\item $[u_{\alpha}, u_{\beta}] = \prod\nolimits_{\gamma \in M_{\alpha, \beta}^G} u_{\gamma}$: Suppose $w\in W, G \in \mathrm{Min}_s(w)$ and $\alpha \leq_G \beta \in \Phi(G) \backslash \{\alpha_s\}$. Using the Weyl-invariance as in Remark \ref{Remark: Weyl-invariance}$(b)$, we deduce the following relation in $M_s$:
		\[ [u_{s\alpha}, u_{s\beta}] = \prod\nolimits_{\gamma \in M_{s\alpha, s\beta}^{sG}} u_{\gamma} = \prod\nolimits_{\gamma \in sM_{\alpha, \beta}^G} u_{\gamma} = \prod\nolimits_{\gamma \in M_{\alpha, \beta}^G} u_{s\gamma}  \]
		
		\item $[v_{\alpha}, v_{\beta}] = \prod\nolimits_{\gamma \in M_{\alpha, \beta}^G} v_{\gamma}$: Suppose $w\in W, G\in \mathrm{Min}_s(w)$ and $\alpha \leq_G \beta \in \Phi(G) \backslash \{\alpha_s\}$. We distinguish the following cases:		
		\begin{enumerate}[label=(\alph*\alph*)]
			\item $\ell(sw) = \ell(w)-1$: Note that $\{ \alpha_s, \delta \} \in \P$ for each $\alpha_s \neq \delta \in \Phi(G)$. We have to show that
			\[ \left[ \left( \prod\nolimits_{\gamma \in M_{\alpha_s, \alpha}^G} u_{s\gamma} \right) u_{s\alpha}, \left( \prod\nolimits_{\gamma \in M_{\alpha_s, \beta}^G} u_{s\gamma} \right) u_{s\beta} \right] = \prod\nolimits_{\gamma \in M_{\alpha, \beta}^G} \left( \prod\nolimits_{\delta \in M_{\alpha_s, \gamma}^G} u_{s\delta} \right) u_{s\gamma}  \]
			is a relation in $M_s$. Note that $[u_{\alpha}, u_{\beta}] = \prod\nolimits_{\gamma \in M_{\alpha, \beta}^G} u_{\gamma}$ is a relation in $U_w$ and $V_{w, s}$, and hence also the $u_s$-conjugate, which is given by
			\allowdisplaybreaks
			\begin{align*}
				\left[ \left( \prod\nolimits_{\gamma \in M_{\alpha_s, \alpha}^G} u_{\gamma} \right) u_{\alpha}, \left( \prod\nolimits_{\gamma \in M_{\alpha_s, \beta}^G} u_{\gamma} \right) u_{\beta} \right] &= [u_s u_{\alpha} u_s, u_s u_{\beta} u_s] \\
				&= u_s [u_{\alpha}, u_{\beta}] u_s \\
				&= u_s \left( \prod\nolimits_{\gamma \in M_{\alpha, \beta}^G} u_{\gamma} \right) u_s \\
				&= \prod\nolimits_{\gamma \in M_{\alpha, \beta}^G} \left( \prod\nolimits_{\delta \in M_{\alpha_s, \gamma}^G} u_{\delta} \right) u_{\gamma}
			\end{align*}
			Using Lemma \ref{Lemma: V_{w, s} to U_{sw} isomorphism} and the homomorphism $U_{sw} \to M_s$, the claim follows.
			
			\item $\ell(sw) = \ell(w) +1$: Then $\alpha_s \notin \Phi(G)$. Let $\delta \in \Phi(G)$. Then either $-\alpha_s \subseteq \delta$ or $o(r_{\alpha_s} r_{\delta}) < \infty$. We first observe the following: Suppose $o(r_{\alpha_s} r_{\delta}) < \infty$ with $R \in \partial^2 \alpha_s \cap \partial^2 \delta$, and $H \in \mathrm{Min}_s(w')$ with $\Phi(R) \subseteq \Phi(H)$ for some $w' \in W$. By Lemma \ref{Lemma: ustaus V} (applied to $\alpha = s\delta$) we have the following in $N_R$:
			\allowdisplaybreaks
			\begin{align*}
				\left( \prod\nolimits_{\gamma \in M_{\alpha_s, \delta}^H} u_{s\gamma} \right) u_{s\delta} &= \left( \prod\nolimits_{\gamma \in M_{\alpha_s, s\delta}^H} \left( \prod\nolimits_{\gamma' \in M_{\alpha_s, s\gamma}^H} u_{\gamma'} \right) u_{s\gamma} \right) \left( \prod\nolimits_{\omega \in M_{\alpha_s, \delta}^H} u_{\omega} \right) u_{\delta}
			\end{align*}
			Since we have a canonical homomorphism $N_R \to M_s$, this is also a relation in $M_s$. Combining this with Lemma \ref{Lemma: taus independent of gallery}$(b)$ and the fact that $v_{\rho} = \left( \prod\nolimits_{\omega \in M_{\alpha_s, \rho}^H} u_{\omega} \right) u_{\rho}$ is a relation in $M_s$ by definition for all $\rho \in \Phi(H) \backslash \{\alpha_s\}$, we deduce the following relation in $M_s$:
			\allowdisplaybreaks
			\begin{align*}
				\left( \prod\nolimits_{\gamma \in M_{\alpha_s, \delta}^H} u_{s\gamma} \right) u_{s\delta} &= \left( \prod\nolimits_{\gamma \in M_{\alpha_s, s\delta}^H} \left( \prod\nolimits_{\omega \in M_{\alpha_s, s\gamma}^H} u_{\omega} \right) u_{s\gamma} \right) \left( \prod\nolimits_{\omega \in M_{\alpha_s, \delta}^H} u_{\omega} \right) u_{\delta} \\
				&= \left( \prod\nolimits_{\gamma \in M_{\alpha_s, s\delta}^H} v_{s\gamma} \right) v_{\delta} \\
				&= \left( \prod\nolimits_{\gamma \in M_{\alpha_s, s\delta}^{sG}} v_{s\gamma} \right) v_{\delta}
			\end{align*}
			This shows that $v_{\delta}$ is mapped onto $\left( \prod\nolimits_{\gamma \in M_{\alpha_s, s\delta}^{sG}} v_{s\gamma} \right) v_{\delta}$ for each $\delta \in \Phi(G)$. In particular, this assignment does not depend on $o(r_{\alpha_s} r_{\delta})$ for $\delta \in \Phi(G)$. We have to verify that
			\[ \left[ \left( \prod\nolimits_{\gamma \in M_{\alpha_s, s\alpha}^{sG}} v_{s\gamma} \right) v_{\alpha}, \left( \prod\nolimits_{\gamma \in M_{\alpha_s, s\beta}^{sG}} v_{s\gamma} \right) v_{\beta} \right] = \prod\nolimits_{\gamma \in M_{\alpha, \beta}^G} \left( \prod\nolimits_{\delta \in M_{\alpha_s, s\gamma}^{sG}} v_{s\delta} \right) v_{\gamma}  \]
			is a relation in $M_s$. Note that $[v_{s\alpha}, v_{s\beta}] = \prod\nolimits_{\gamma \in M_{s\alpha, s\beta}^{sG}} v_{\gamma}$ is a relation in $M_s$. Using $(aa)$ we deduce that 
			\[ \left[ \left( \prod\nolimits_{\gamma \in M_{\alpha_s, s\alpha}^{sG}} u_{s\gamma} \right) u_{\alpha}, \left( \prod\nolimits_{\gamma \in M_{\alpha_s, s\beta}^{sG}} u_{s\gamma} \right) u_{\beta} \right] = \prod\nolimits_{\gamma \in M_{s\alpha, s\beta}^{sG}} \left( \prod\nolimits_{\delta \in M_{\alpha_s, \gamma}^{sG}} u_{s\delta} \right) u_{s\gamma} \]
			is a relation in $M_s$. Applying the automorphism $u_s \in \Aut(M_s)$ and using the Weyl-invariance we see that
			\allowdisplaybreaks
			\begin{align*}
				\left[ \left( \prod\nolimits_{\gamma \in M_{\alpha_s, s\alpha}^{sG}} v_{s\gamma} \right) v_{\alpha}, \left( \prod\nolimits_{\gamma \in M_{\alpha_s, s\beta}^{sG}} v_{s\gamma} \right) v_{\beta} \right] &= \prod\nolimits_{\gamma \in M_{s\alpha, s\beta}^{sG}} \left( \prod\nolimits_{\delta \in M_{\alpha_s, \gamma}^{sG}} v_{s\delta} \right) v_{s\gamma} \\
				&= \prod\nolimits_{\gamma \in sM_{\alpha, \beta}^{G}} \left( \prod\nolimits_{\delta \in M_{\alpha_s, \gamma}^{sG}} v_{s\delta} \right) v_{s\gamma} \\
				&= \prod\nolimits_{\gamma \in M_{\alpha, \beta}^{G}} \left( \prod\nolimits_{\delta \in M_{\alpha_s, s\gamma}^{sG}} v_{s\delta} \right) v_{\gamma}
			\end{align*}
			is a relation in $M_s$. This finishes the proof.
		\end{enumerate}
		
		\item $v_{\alpha} = \left( \prod\nolimits_{\gamma \in M_{\alpha_s, \alpha}^G} u_{\gamma} \right) u_{\alpha}$: This holds by definition.
	\end{enumerate}
	
	This shows the existence of the endomorphism $\tau_s:M_s \to M_s$. Using the isomorphism $\phi: M_s \to N_s$ from Lemma \ref{Lemma: Presentation of Ns}, we obtain an endomorphism $\tau_s: N_s \to N_s$ via $N_s \overset{\phi^{-1}}{\to} M_s \overset{\tau_s}{\to} M_s \overset{\phi}{\to} N_s$. Moreover, this endomorphism is as required.
\end{proof}

\begin{corollary}\label{ustaus}
	We have $\tau_s^2 = 1 = (u_s \tau_s)^3$. In particular, $\tau_s \in \Aut(N_s)$.
\end{corollary}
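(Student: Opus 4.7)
By Proposition~\ref{Proposition: Existence of taus}, $\tau_s$ is an endomorphism of $N_s$, and since $N_s \trianglelefteq U_+ = U_s \ltimes N_s$ by Lemma~\ref{UplusNsUs}, the element $u_s$ acts on $N_s$ by conjugation as an automorphism, so $u_s\tau_s$ is a well-defined endomorphism. Once $\tau_s^2 = \id$ and $(u_s\tau_s)^3 = \id$ are proved, $\tau_s$ has a two-sided inverse and $\tau_s \in \Aut(N_s)$ follows. The plan is to check both identities on the generating set $\{u_\alpha, v_\alpha \mid \alpha \in \Phi_+ \setminus \{\alpha_s\}\}$ of $M_s \cong N_s$ from Lemma~\ref{Lemma: Presentation of Ns}, where $v_\alpha$ corresponds to $u_s u_\alpha u_s$. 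A preparatory observation: for every such $\alpha$, regardless of case, one has $s\alpha \in \Phi_+ \setminus \{\alpha_s\}$ and $\{\alpha_s, s\alpha\}\in\mathcal{P}$. Indeed, if $\{\alpha_s,\alpha\}\in\mathcal{P}$ this is clear from the common rank-$2$ residue, and if $-\alpha_s \subseteq \alpha$ then $\alpha_s \subsetneq s\alpha \subsetneq W$, so $-\alpha_s \not\subseteq s\alpha$, giving $\{\alpha_s,s\alpha\}\in\mathcal{P}$. This makes the formulas of Proposition~\ref{Proposition: Existence of taus} iterable.

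For $\tau_s^2 = \id$, the identity on $u_\alpha$ is immediate: $\tau_s^2(u_\alpha) = \tau_s(u_{s\alpha}) = u_{s(s\alpha)} = u_\alpha$. On $v_\alpha$ with $\{\alpha_s,\alpha\}\in\mathcal{P}$, apply $\tau_s$ termwise to the formula $\tau_s(v_\alpha) = (\prod_{\gamma \in M_{\alpha_s,\alpha}^G} u_{s\gamma})u_{s\alpha}$ and re-identify the result with $v_\alpha$ via its defining relation in $M_s$. On $v_\alpha$ with $-\alpha_s \subseteq \alpha$, iterate: the formula $\tau_s(v_\alpha) = (\prod_{\gamma \in M_{\alpha_s,s\alpha}^G} v_{s\gamma})v_\alpha$ has each inner $v_{s\gamma}$ in the prenilpotent case to which the preceding step already applies, so $\tau_s$ of it is explicit, and after substituting the defining relations of the $v$'s, the extraneous factors cancel pairwise, leaving $v_\alpha$.

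For $(u_s\tau_s)^3 = \id$, using $\tau_s^2 = u_s^2 = 1$ this is equivalent to the braid identity $\tau_s u_s \tau_s = u_s \tau_s u_s$. On $u_\alpha$ with $\{\alpha_s,\alpha\}$ lying in some rank-$2$ spherical residue $R \in \partial^2 \alpha_s$, both sides unfold via the formulas of Proposition~\ref{Proposition: Existence of taus} and the defining relations in $M_s$ to the two sides of Lemma~\ref{Lemma: ustaus V}, with the galleries reconciled by Lemma~\ref{Lemma: taus independent of gallery}; since that lemma is proved in $N_R$, the identity transfers to $N_s$ via the canonical injection $N_R \hookrightarrow N_s$. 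On $u_\alpha$ with $-\alpha_s \subseteq \alpha$ a shorter path works: since $u_s\in\Aut(M_s)$ swaps $u_\beta$ with $v_\beta$, we obtain
\[
u_s\tau_s u_s(u_\alpha) = u_s\Bigl(\bigl(\textstyle\prod_{\gamma \in M_{\alpha_s,s\alpha}^G} v_{s\gamma}\bigr)v_\alpha\Bigr) = \bigl(\textstyle\prod u_{s\gamma}\bigr)u_\alpha,
\]
matching $\tau_s u_s \tau_s(u_\alpha)$ as computed from the prenilpotent-case formula applied to $v_{s\alpha}$ (after aligning galleries via Lemma~\ref{Lemma: taus independent of gallery}). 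The identity on the $v_\alpha$ generators then reduces to the identity on the $u_\alpha$ generators by pre-composing with $u_s$-conjugation.

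The main anticipated obstacle is the case $\{\alpha_s,\alpha\}\in\mathcal{P}$ with $o(r_{\alpha_s}r_\alpha) = \infty$: no rank-$2$ spherical residue contains both roots, so Lemma~\ref{Lemma: ustaus V} is not directly available. To handle this, I would unfold both sides of the braid relation explicitly in $M_s$ using the defining relations of the $v$'s and Weyl-invariance, relying on Lemma~\ref{Lemma: taus independent of gallery} to ensure that the various products $\prod_{\gamma \in M_{\alpha_s,\bullet}^\bullet} u_{s\gamma}$ assemble to the same word in $M_s$ from both sides — so that the required identity becomes a consequence of the defining relations themselves, paralleling the rank-$2$ computation but carried out intrinsically in the direct limit.
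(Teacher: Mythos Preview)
Your overall architecture matches the paper's, but there is a concrete error and a misplaced worry.

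The error is in your $\tau_s^2$ argument for $v_\alpha$ with $-\alpha_s \subseteq \alpha$. You claim that each $v_{s\gamma}$ appearing in $\tau_s(v_\alpha) = \bigl(\prod_{\gamma \in M_{\alpha_s,s\alpha}^G} v_{s\gamma}\bigr)v_\alpha$ lies in the prenilpotent case. It does not: for $\gamma \in (\alpha_s, s\alpha)$ one has $\alpha_s \subseteq \gamma$, hence $-\alpha_s = s\alpha_s \subseteq s\gamma$, so $\{\alpha_s, s\gamma\}$ is \emph{not} prenilpotent and $v_{s\gamma}$ is again in the same case you are trying to handle. Your ``preparatory observation'' that $\{\alpha_s, s\alpha\} \in \mathcal{P}$ always holds fails for exactly the same reason whenever $\alpha_s \subsetneq \alpha$. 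The fix is not a recursion to a previously handled case: you must apply the second formula once more to each $\tau_s(v_{s\gamma})$, obtaining the expression $\bigl(\prod_\gamma (\prod_{\delta \in M_{\alpha_s,\gamma}^G} v_{s\delta}) v_{s\gamma}\bigr)\bigl(\prod_\gamma v_{s\gamma}\bigr)$, and then show this equals $1$. The paper does this by recognizing its $u_s$-conjugate as a relation in $V_{w,s}$ (namely $u_s[u_s,u_{s\beta}]u_s[u_s,u_{s\beta}] = 1$) and transporting it to $U_{sw} \hookrightarrow N_s$ via Lemma~\ref{Lemma: V_{w, s} to U_{sw} isomorphism}; it is not a matter of ``extraneous factors cancelling pairwise'' from the defining relations alone.

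Conversely, your ``main anticipated obstacle'' --- the case $\{\alpha_s,\alpha\}\in\mathcal{P}$ with $o(r_{\alpha_s}r_\alpha)=\infty$, i.e.\ $\alpha_s \subsetneq \alpha$ --- is actually the easiest case and needs none of the machinery you propose. In your own $M_s$ setup: $\tau_s u_s \tau_s(u_\alpha) = \tau_s(v_{s\alpha})$, and since $-\alpha_s \subseteq s\alpha$ the second formula gives $\bigl(\prod_{\gamma \in M_{\alpha_s,\alpha}^G} v_{s\gamma}\bigr)v_{s\alpha}$; on the other side $u_s\tau_s u_s(u_\alpha) = u_s\bigl(\tau_s(v_\alpha)\bigr) = u_s\bigl((\prod u_{s\gamma})u_{s\alpha}\bigr) = (\prod v_{s\gamma})v_{s\alpha}$. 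The two sides coincide on the nose. The paper does the equivalent five-line computation directly in $N_s$.
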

\begin{proof}
	For short we will not specify a gallery $G$. If $M_{\alpha_s, \alpha}^G$ appears, we will implicitly assume that $G \in \mathrm{Min}_s(w)$ for some $w\in W$ with $\ell(sw) = \ell(w) -1$ such that $\alpha \in \Phi(G)$.
	
	By the previous proposition we have $\tau_s(u_{\alpha}) = u_{s\alpha}$ for each $\alpha_s \neq \alpha \in \Phi_+$ and $\tau_s(u_s u_{\beta} u_s) = u_s \left( \prod\nolimits_{\gamma \in M_{\alpha_s, s\beta}^G} u_{s\gamma} \right) u_{\beta} u_s$ for each $-\alpha_s \subseteq \beta \in \Phi_+$. Using this we establish the claim. We will first show $\tau_s^2 = 1$. Therefore, let $\alpha_s \neq \alpha \in \Phi_+$. Then $\alpha_s \neq s\alpha \in \Phi_+$ and we have $\tau_s^2(u_{\alpha}) = \tau_s(u_{s\alpha}) = u_{\alpha}$. Now let $-\alpha_s \subseteq \beta \in \Phi_+$. Note that for $\gamma \in M_{\alpha_s, s\beta}^G$ we have $-\alpha_s \subseteq s\gamma$. This implies
	\allowdisplaybreaks
	\begin{align*}
		\tau_s^2(u_s u_{\beta} u_s) &= \tau_s \left( u_s \left( \prod\nolimits_{\gamma \in M_{\alpha_s, s\beta}^G} u_{s\gamma} \right) u_{\beta} u_s \right) \\
		&= \left( \prod\nolimits_{\gamma \in M_{\alpha_s, s\beta}^G} \tau_s( u_s u_{s\gamma} u_s ) \right) \tau_s( u_s u_{\beta} u_s ) \\
		&= \left(\prod\nolimits_{\gamma \in M_{\alpha_s, s\beta}^G} u_s \left( \prod\nolimits_{\delta \in M_{\alpha_s, \gamma}^G} u_{s\delta} \right) u_{s\gamma} u_s \right) \left( u_s \left( \prod\nolimits_{\gamma \in M_{\alpha_s, s\beta}^G} u_{s\gamma} \right) u_{\beta} u_s \right) \\
		&= u_s \left(\prod\nolimits_{\gamma \in M_{\alpha_s, s\beta}^G} \left( \prod\nolimits_{\delta \in M_{\alpha_s, \gamma}^G} u_{s\delta} \right) u_{s\gamma} \right) \left( \prod\nolimits_{\gamma \in M_{\alpha_s, s\beta}^G} u_{s\gamma} \right) u_{\beta} u_s
	\end{align*}
	Note that we have the following relation in $U_w$ and hence in $V_{w, s}$:
	\allowdisplaybreaks
	\begin{align*}
		\left(\prod\nolimits_{\gamma \in M_{\alpha_s, s\beta}^G} \left( \prod\nolimits_{\delta \in M_{\alpha_s, \gamma}^G} u_{\delta} \right) u_{\gamma} \right) \prod\nolimits_{\gamma \in M_{\alpha_s, s\beta}^G} u_{\gamma} &= \left(\prod\nolimits_{\gamma \in M_{\alpha_s, s\beta}^G} [u_s, u_{\gamma}] u_{\gamma} \right) [u_s, u_{s\beta}] \\
		&= u_s [u_s, u_{s\beta}] u_s [u_s, u_{s\beta}] \\
		&= (u_{s\beta}u_su_{s\beta})^2 = 1
	\end{align*}
	Using Lemma \ref{Lemma: V_{w, s} to U_{sw} isomorphism}, the following is a relation in $U_{sw}$ and hence in $N_s$:
	\allowdisplaybreaks
	\begin{align*}
		\left(\prod\nolimits_{\gamma \in M_{\alpha_s, s\beta}^G} \left( \prod\nolimits_{\delta \in M_{\alpha_s, \gamma}^G} u_{s\delta} \right) u_{s\gamma} \right) \left( \prod\nolimits_{\gamma \in M_{\alpha_s, s\beta}^G} u_{s\gamma} \right) = 1
	\end{align*}
	This shows $\tau_s^2(u_s u_{\beta} u_s) = u_s u_{\beta} u_s$ and hence $\tau_s^2 = 1$. In particular, $\tau_s$ is an automorphism. To show that $(u_s \tau_s)^3 = 1$, we distinguish the following cases. Let $\alpha_s \neq \alpha \in \Phi_+$. Assume that $o(r_{\alpha_s} r_{\alpha}) < \infty$ and let $R \in \partial^2 \alpha_s \cap \partial^2 \alpha$. Note that we have a homomorphism $N_R \to M_s \to N_s$. By Lemma \ref{Lemma: ustaus V} we have
	\[ \left( \prod\nolimits_{\gamma \in M_{\alpha_s, s\alpha}^G} u_{s\gamma} \right) u_{\alpha} = \left( \prod\nolimits_{\gamma \in M_{\alpha_s, \alpha}^G} \left( \prod\nolimits_{\gamma' \in M_{\alpha_s, s\gamma}^G} u_{\gamma'} \right) u_{s\gamma} \right) \left( \prod\nolimits_{\gamma \in M_{\alpha_s, s\alpha}^G} u_{\gamma} \right) u_{s\alpha} \]
	in $N_R$ and hence $(u_s \tau_s)^3 (u_{\alpha}) = u_{\alpha}$ in $N_s$. Thus we assume $\alpha_s \subsetneq \alpha$. Then we have the following:
	\begin{align*}
		(u_s \tau_s)^3(u_{\alpha}) &= (u_s \tau_s)^2(u_s u_{s\alpha} u_s) \\
		&= (u_s \tau_s u_s)\left( u_s \left( \prod\nolimits_{\gamma \in M_{\alpha_s, \alpha}^G} u_{s\gamma} \right) u_{s\alpha} u_s \right) \\
		&= (u_s \tau_s)\left( \left( \prod\nolimits_{\gamma \in M_{\alpha_s, \alpha}^G} u_{s\gamma} \right) u_{s\alpha} \right) \\
		&= u_s \left( \left( \prod\nolimits_{\gamma \in M_{\alpha_s, \alpha}^G} u_{\gamma} \right) u_{\alpha} \right) \\
		&= u_{\alpha}
	\end{align*}
	Now we assume $-\alpha_s \subseteq \alpha$. Using the previous case, we deduce the following:
	\begin{align*}
		(u_s \tau_s)^3(u_s u_{\alpha} u_s) &= (u_s \tau_s)(u_{s\alpha}) = u_s(u_{\alpha}) = u_s u_{\alpha} u_s \\
		(u_s \tau_s)^3(u_{\alpha}) &= (u_s \tau_s)^2( [u_s, u_{s\alpha}] u_{s\alpha} ) = (u_s \tau_s)^{-1}( [u_s, u_{s\alpha}] u_{s\alpha} ) = u_{\alpha} \qedhere
	\end{align*}
\end{proof}

\begin{definition}\label{Definition: Ps}
	Note that $\phi: \Sym(3) \to \langle u_s, \tau_s \rangle \leq \Aut(N_s), \begin{cases*}
		\begin{pmatrix}
			1 & 2
		\end{pmatrix} \mapsto u_s \\
		\begin{pmatrix}
			2 & 3
		\end{pmatrix} \mapsto \tau_s
	\end{cases*}$ is an epimorphism. Thus we define the group $P_s := \Sym(3) \ltimes_{\phi} N_s$. For short we will denote the elements in $\Sym(3)$ by their images in $\Aut(N_s)$. Note that $\tau_s n_s \tau_s = \tau_s(n_s) \in N_s$. In particular, we have $\tau_s u_{\alpha} \tau_s = u_{s\alpha}$ for each $\alpha_s \neq \alpha \in \Phi_+$. Note that $U_+ \cong \langle u_s \rangle \ltimes N_s \leq P_s$.
\end{definition}

\section{An action of the groups $P_s$}\label{Section: Action of Ps}

Recall, that $\mathcal{M}$ is a faithful and Weyl-invariant commutator blueprint of type $(W, S)$. In this section we will show that the groups $P_s$ act faithfully on a chamber system $\Cbf$ over $S$ for every $s\in S$. Moreover, we will show that the braid relations $(\tau_s \tau_t)^{m_{st}}$ act trivial on $\Cbf$. In particular, the action of the groups $P_s$ extend to an action of $G$ on $\Cbf$. We use this action in Theorem \ref{RGDsystem} to construct an RGD-system containing $U_+$ as a subgroup.

\begin{definition}
	We let $U_{1_W} := \{ 1 \} \leq U_+$. The set of chambers is given by $\mathcal{C} := \{ gU_w \mid g\in U_+, w\in W \}$, and $s$-adjacency is defined as follows:
	\[ gU_w \sim_s hU_{w'} :\Leftrightarrow w' \in \{w, ws\} \text{ and } g^{-1}h \in U_w \cup U_{ws} \]
	Then $\Cbf = (\C, (\sim_s)_{s\in S})$ is a chamber system over $S$.
\end{definition}
The idea of considering this chamber system is not new (cf.\ \cite[Section $8.7$]{AB08}). Before we define an action of $P_s$ on the chamber system $\Cbf$ we note that every element of $U_+$ can be written uniquely as $nu$ with $n \in N_s$ and $u \in U_s$ by Lemma \ref{UplusNsUs}. Thus it suffices to define the action on cosets $nuU_w$ with $n\in N_s, u \in U_s$ and $w\in W$. To show that our assignment will actually be an action we need the following auxiliary result.

\begin{lemma}\label{NsUw}
	For $n\in N_s$ the following hold:
	\begin{enumerate}[label=(\alph*)]
		\item If $n\in U_w$, then $n^{\tau_s} \in N_s \cap U_{sw}$;
		
		\item If $\ell(sw) = \ell(w) +1$ and $n^{u_s} \in U_w$, then $n^{\tau_s u_s} \in N_s \cap U_w$.
	\end{enumerate}
\end{lemma}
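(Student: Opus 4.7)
The plan is to reduce both parts to the structural decompositions supplied by Lemmas \ref{UplusNsUs} and \ref{Lemma: V_{w, s} to U_{sw} isomorphism}, exploiting the dichotomy $\alpha_s\in\Phi(w)\iff\ell(sw)=\ell(w)-1$. For (a) I would split into cases and match $\tau_s$ against the isomorphism of Lemma \ref{Lemma: V_{w, s} to U_{sw} isomorphism}(b); for (b) I would then reduce to two applications of (a) by means of the braid identity $u_s\tau_s u_s=\tau_s u_s\tau_s$ in $\Aut(N_s)$ furnished by Corollary \ref{ustaus}.

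For part (a), first suppose $\ell(sw)=\ell(w)+1$, so that $\alpha_s\notin\Phi(w)$. Then $U_w$ is generated by $\{u_\alpha : \alpha\in\Phi(w)\}$, each with $\alpha\neq\alpha_s$, hence $U_w\leq N_s$ (so the hypothesis $n\in N_s$ is automatic); by Proposition \ref{Proposition: Existence of taus} one has $\tau_s(u_\alpha)=u_{s\alpha}$, and since $s\Phi(w)\subseteq\Phi(sw)$ the image $\tau_s(U_w)$ lies in $U_{sw}$. If instead $\ell(sw)=\ell(w)-1$, then the subgroup $V_{w,s}\leq U_w$ of Lemma \ref{Lemma: V_{w, s} to U_{sw} isomorphism} lies in $N_s$ (because $N_s$ is normal in $U_+$ and contains each $u_\alpha$ with $\alpha\neq\alpha_s$), and by Lemma \ref{UplusNsUs} one has $U_w=V_{w,s}\cdot U_s$ with $V_{w,s}\cap U_s=\{1\}$. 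Writing $n=vu$ with $v\in V_{w,s}$ and $u\in U_s$, the assumption $n\in N_s$ forces $u=1$, hence $n\in V_{w,s}$. On the generators of $V_{w,s}$ the automorphism $\tau_s$ acts by $u_\alpha\mapsto u_{s\alpha}$, coinciding with the isomorphism $V_{w,s}\to U_{sw}$ of Lemma \ref{Lemma: V_{w, s} to U_{sw} isomorphism}(b); therefore $\tau_s(n)\in U_{sw}$, and combined with $\tau_s(n)\in N_s$ we obtain $n^{\tau_s}\in N_s\cap U_{sw}$.

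For part (b), set $m:=n^{u_s}=u_s(n)$; the hypothesis gives $m\in U_w$, and since $\ell(sw)>\ell(w)$ we have $U_w\leq N_s$. Using $n=u_s(m)$ together with the braid identity of Corollary \ref{ustaus}, a direct computation in $P_s$ yields
\[
  n^{\tau_s u_s}=u_s\bigl(\tau_s(n)\bigr)=(u_s\tau_s u_s)(m)=(\tau_s u_s\tau_s)(m)=\tau_s\bigl(u_s(\tau_s(m))\bigr).
\]
Applying (a) to $m\in N_s\cap U_w$ gives $\tau_s(m)\in N_s\cap U_{sw}$; since $\alpha_s\in\Phi(sw)$ we have $u_s\in U_{sw}$, so conjugation by $u_s$ preserves $U_{sw}$ and $u_s(\tau_s(m))\in N_s\cap U_{sw}$. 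A second invocation of (a), this time with $sw$ in the role of $w$, delivers $\tau_s\bigl(u_s(\tau_s(m))\bigr)\in N_s\cap U_{s\cdot sw}=N_s\cap U_w$, as required.

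The one subtle point is, in the case $\ell(sw)=\ell(w)-1$ of (a), the identification of $\tau_s|_{V_{w,s}}$ with the isomorphism of Lemma \ref{Lemma: V_{w, s} to U_{sw} isomorphism}(b); this is forced by the fact that the two homomorphisms agree on each generator $u_\alpha$. Everything else is a packaging of the braid identity together with the semi-direct decompositions of $U_+$ and $U_w$.
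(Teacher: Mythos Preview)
Your proof is correct and follows essentially the same route as the paper. For (a) the paper writes $n=u_1\cdots u_k$ along a gallery in $\mathrm{Min}_s(w)$ and argues directly that $u_1=1$ when $\ell(sw)=\ell(w)-1$, whereas you phrase the same step via the decomposition $U_w=V_{w,s}\cdot U_s$ and Lemma~\ref{Lemma: V_{w, s} to U_{sw} isomorphism}(b); for (b) both arguments use the braid identity from Corollary~\ref{ustaus} (the paper as $\tau_s u_s=u_s\tau_s u_s\tau_s$, you as $u_s\tau_s u_s=\tau_s u_s\tau_s$) followed by two applications of (a).
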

\begin{proof}
	Let $w\in W$, let $G = (c_0, \ldots, c_k) \in \mathrm{Min}_s(w)$ and let $(\alpha_1, \ldots, \alpha_k)$ be the sequence of roots crossed by $G$. Since $n\in U_w$, there exists $u_i \in U_{\alpha_i}$ such that $n = u_1 \cdots u_k$. If $\ell(sw) = \ell(w) +1$, then $u_i^{\tau_s} \in U_{s\alpha_i} \leq U_{sw}$ and hence $n^{\tau_s} \in U_{sw}$. Thus we assume that $\ell(sw) = \ell(w) -1$ and hence $\alpha_1 = \alpha_s$. Since $U_{\alpha_i} \leq N_s$ for each $2 \leq i \leq k$, we have $u_1 = n (u_2 \cdots u_k)^{-1} \in N_s \cap U_s = \{1\}$. Thus $n^{\tau_s} \in U_{sw}$ and Assertion $(a)$ follows. Now we assume that $\ell(sw) = \ell(w) +1$ and that $n^{u_s} \in U_w$. Note that $n^{u_s} \in N_s$. Then $(a)$ provides $n^{u_s \tau_s} \in N_s \cap U_{sw}$. Since $\ell(ssw) = \ell(w) = \ell(sw) -1$, we have $u_s \in U_{sw}$ and hence $n^{u_s \tau_s u_s} \in N_s \cap U_{sw}$. Using Corollary \ref{ustaus} and Assertion $(a)$, we obtain $n^{\tau_s u_s} = n^{u_s \tau_s u_s \tau_s} \in N_s \cap U_w$.
\end{proof}

\begin{remark}\label{Remark: Presentation of Ps}
	Let $\langle G_s \mid R_s \rangle$ be a presentation of $N_s$. Then a presentation of $P_s$ is given by $\langle u_s, \tau_s, G_s \mid u_s^2, \tau_s^2, (u_s \tau_s)^3, R_s, u_s n u_s = n^{u_s}, \tau_s n \tau_s = n^{\tau_s} \text{ for each } n \in G_s \rangle$.
\end{remark}

\begin{proposition}\label{Proposition: Ps acts on C}
	For $s\in S$ the group $P_s$ acts on $\Cbf$ as follows:
	\[ g.nuU_w := \begin{cases}
	gnu U_w & g \in U_+ \\
	n^{\tau_s} U_{sw} & g = \tau_s, \ell(sw) = \ell(w) -1 \text{ or } u=1 \\
	n^{\tau_s} u_s U_w & g = \tau_s, \ell(sw) = \ell(w) +1, u = u_s
	\end{cases} \]
	Moreover, this action is faithful.
\end{proposition}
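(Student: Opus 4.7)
The plan is to use the presentation of $P_s$ recorded in Remark \ref{Remark: Presentation of Ps}, defining the action on generators via the given formulas and then checking (i) well-definedness on cosets, (ii) that each defining relation of $P_s$ is satisfied on every chamber, and (iii) faithfulness.

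For well-definedness, every $g \in U_+$ factors uniquely as $g = nu$ with $n \in N_s$, $u \in U_s$ by Lemma \ref{UplusNsUs}, so the only content is to show that the prescribed $\tau_s$-image depends only on the coset $gU_w$. I would split into two subcases according to the sign of $\ell(sw) - \ell(w)$. If $\ell(sw) = \ell(w) - 1$, then $u_s \in U_w$, so $U_w = V_{w,s} U_s$, and changing $g$ by $u'' \in U_w$ alters $n$ by an element of $U_w \cap N_s = V_{w,s}$; Lemma \ref{NsUw}(a) then ensures that its $\tau_s$-conjugate lies in $U_{sw}$, as required. If $\ell(sw) = \ell(w) + 1$, then $U_w \subseteq N_s$, and an element $u'' \in U_w$ modifies $n$ to $n \cdot u_s u'' u_s$; Lemma \ref{NsUw}(b) applied to $u_s u'' u_s$, whose $u_s$-conjugate is $u'' \in U_w$, yields exactly the inclusion $u_s^{-1}(u_s u'' u_s)^{\tau_s} u_s \in U_w$ needed to conclude that $n^{\tau_s} u_s U_w$ is unchanged.

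To verify the relations of $P_s$, the ones living inside $U_+$ (namely $u_s^2 = 1$, the relations $R_s$, and the conjugation relations $u_s n u_s = n^{u_s}$) are automatic, because $U_+$ already acts on $\C$ by left translation. For $\tau_s n \tau_s = n^{\tau_s}$ with $n \in N_s$, I would substitute and verify on each of the three cases of the formula; the identity reduces directly to the definition of $\tau_s$ as an automorphism of $N_s$ from Proposition \ref{Proposition: Existence of taus}. For $\tau_s^2 = 1$ and $(u_s \tau_s)^3 = 1$, the key input is Corollary \ref{ustaus}, which provides the analogous identities already in $\Aut(N_s)$. One then chases a chamber $nuU_w$ through the relevant word, tracking at each step which of the three subcases of the formula applies, and confirms that the iterated $\tau_s$-corrections combine to the action of the corresponding automorphism of $N_s$, up to a residual factor absorbed by $U_w$ or $U_{sw}$; Lemma \ref{Lemma: ustaus V} is tailored precisely to handle these correction terms.

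The main obstacle will be the braid relation $(u_s \tau_s)^3 = 1$: alternating $u_s$ and $\tau_s$ forces one to toggle between the $\ell(sw) = \ell(w) - 1$ and $\ell(sw) = \ell(w) + 1$ subcases of the formula, and over six applications the bookkeeping of the accumulated correction terms is delicate; one must check that they telescope into an element of $U_w$ rather than producing a net shift of the coset. Once the action is established, faithfulness is comparatively straightforward. Consider the chamber $U_{1_W} = \{1\}$; its $w$-part is $w = 1_W$, with $\ell(s \cdot 1_W) = 1 = \ell(1_W) + 1$. Any element $g \in U_+$ that fixes every singleton $\{h\}$ is forced to be $1$. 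Any element of $P_s$ involving $\tau_s$ sends $U_{1_W}$ to a coset of the form $n^{\tau_s} U_s$ (via the $u = 1$ clause of the formula), which is a different chamber since $U_s \neq U_{1_W}$. Hence only the identity of $P_s$ acts trivially on $\Cbf$.
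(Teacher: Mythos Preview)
Your outline captures the well-definedness and relation-checking portions reasonably well, but it omits the single most substantial step of the proof: verifying that the map $\phi_{\tau_s}$ preserves $t$-adjacency for every $t \in S$. The proposition asserts an action on the chamber system $\Cbf = (\C, (\sim_t)_{t\in S})$, not merely on the set $\C$; this is essential downstream, since Theorem \ref{Theorem: braid relations act trivial} applies Theorem \ref{Theorem5.205AB08} to the automorphism $(\tau_s\tau_t)^{m_{st}}$ of a spherical rank-$2$ building. The paper devotes roughly half of its proof to this adjacency check, splitting into several cases according to the signs of $\ell(sw)-\ell(w)$, $\ell(sw')-\ell(w')$, $\ell(swt)-\ell(wt)$ and the values of $u,u'$, and repeatedly invoking Lemma \ref{conditionF} (the exchange condition $swt=w$) together with Lemma \ref{NsUw}. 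None of this appears in your plan, and nothing you wrote would supply it.

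Two smaller points. First, Lemma \ref{Lemma: ustaus V} is not what handles the correction terms for $(u_s\tau_s)^3$; the paper instead observes that $(u_s\tau_s)^3.nuU_w = n^{(\tau_s u_s)^3}\cdot (u_s\tau_s)^3.uU_w$, invokes Corollary \ref{ustaus} to kill the first factor, and then does a direct three-line check on $uU_w$ with $n=1$. Second, your faithfulness argument is not quite right: not every nontrivial element of $\Sym(3)$ sends $U_{1_W}$ to a chamber with $w$-part equal to $s$ (for instance $\tau_s u_s.U_{1_W} = u_s U_{1_W}$). The paper handles this by checking each of the five nontrivial elements of $\Sym(3)$ on a suitably chosen chamber (either $U_{1_W}$ or $U_s$), then reduces the general case $g=xn$ to this.
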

\begin{proof}
	For $g\in U_+ \cup \{ \tau_s \}$ we let $\phi_g: \C \to \C, nuU_w \mapsto g.nuU_w$.
	
	\medskip
	\noindent \emph{The mapping $\phi_g$ is well-defined:} We note that $u_s.nuU_w = u_snuU_w = n^{u_s}u_suU_w$. We first show that the assignment is well-defined. Since $\phi_g$ for $g\in U_+$ is given by multiplication from the left, it suffices to consider $\phi_{\tau_s}$. Suppose $w\in W$ and $n, n' \in N_s, u, u' \in U_s$ such that $nuU_w = n'u'U_w$. Then $u^{-1}n^{-1}n'u' \in U_w$.
	\begin{enumerate}[label=(\alph*)]
		\item $\ell(sw) = \ell(w)-1$: Then $u_s \in U_w$ and hence $n^{-1}n' \in U_w$. Using Lemma \ref{NsUw}$(a)$, we obtain $(n^{-1}n')^{\tau_s} \in U_{sw}$. This implies $\tau_s.nuU_w = n^{\tau_s}U_{sw} = (n')^{\tau_s}U_{sw} = \tau_s.n'u'U_w$.
		
		\item $\ell(sw) = \ell(w)+1$: We distinguish the following three cases:
		\begin{itemize}
			\item $u = 1 = u'$: Then the claim follows as in $(a)$.
			
			\item $\{u, u'\} = \{ 1, u_s \}$: Assume $u \neq 1 = u'$. Then we have $u^{-1}n^{-1}n' \in U_w$. Since $\ell(sw) = \ell(w) +1$, we have $U_w \leq N_s$ and hence $u_s = u^{-1} \in N_s$. This is a contradiction. The case $u = 1 \neq u'$ is similar.
			
			\item $u = u_s = u'$: Then $(n^{-1}n')^{u_s} \in N_s \cap U_w$. Using Lemma \ref{NsUw}$(b)$, we obtain $(n^{-1} n')^{\tau_s u_s} \in N_s \cap U_w$ and hence $\tau_s.nuU_w = n^{\tau_s}u_sU_w = (n')^{\tau_s}u_sU_w = \tau_s.n'u'U_w$.
		\end{itemize}
	\end{enumerate}
	Thus $\phi_g$ is well-defined.
	
	\medskip
	\noindent \emph{$\phi_g$ is bijective for each $g\in U_+ \cup \{ \tau_s \}$:} We will show that $\phi_{g^{-1}} \circ \phi_g = \id$. If $g\in U_+$ there is nothing to show. Thus we consider $g = \tau_s$. By construction and Corollary \ref{ustaus} we have $\phi_{\tau_s} \circ \phi_{\tau_s} = \id$ and $\phi_g$ is bijective for every $g\in U_+ \cup \{ \tau_s \}$.
	
	\medskip
	\noindent \emph{$\phi_g \in \Aut(\Cbf)$:} As $\phi_g$ is bijective, it suffices to show that $\phi_g$ preserves $t$-adjacency for each $t\in S$. Suppose $n, n' \in N_s, u, u' \in U_s$ and $w, w' \in W$ such that $nuU_w \sim_t n'u'U_{w'}$. Then we have $w' \in \{ w, wt \}$ and $u^{-1}n^{-1}n'u' \in U_w \cup U_{wt}$. Since for $g\in U_+$ the bijection $\phi_g$ is multiplication with $g$ from the left, it preserves $t$-adjacency and it suffices to consider $\phi_{\tau_s}$. We distinguish the following cases:
	\begin{enumerate}[label=(\alph*)]
		\item $u = 1 = u'$: Then $\tau_s. nU_w = n^{\tau_s} U_{sw}$ and $\tau_s.n' U_{w'} = (n')^{\tau_s} U_{sw'}$. Because of the $t$-adjacency we have $n^{-1}n \in U_w \cup U_{wt}$ and Lemma \ref{NsUw}$(a)$ implies $(n^{-1})^{\tau_s} (n')^{\tau_s} = (n^{-1}n')^{\tau_s} \in U_{sw} \cup U_{swt}$. Since $sw' \in \{ sw, swt \}$, we deduce $\phi_{\tau_s}(nU_w) \sim_t \phi_{\tau_s}(n'U_{w'})$.
		
		\item $\ell(sw) = \ell(w) -1$ and $\ell(sw') = \ell(w') -1$: Then $nuU_w = nU_w$ and $n'u'U_{w'} = n'U_{w'}$ and the claim follows from $(a)$.
		
		\item $\ell(sw) = \ell(w) +1$ and $\ell(sw') = \ell(w') +1$: Recall that $w' \in \{ w, wt \}$. If $u = 1 = u'$ the claim follows from $(a)$. If $u = u_s = u'$ we have $(n^{-1} n')^{u_s} \in U_w \cup U_{wt}$ and $\tau_s.nu_sU_w = n^{\tau_s}u_s U_w, \tau_s.n' u_s U_{w'} = (n')^{\tau_s} u_s U_{w'}$. If $\ell(swt) = \ell(wt) +1$, then we have $(n^{-1} n')^{\tau_s u_s} \in N_s \cap \left( U_{w} \cup U_{wt} \right)$ by Lemma \ref{NsUw}$(b)$ and we deduce $\phi_{\tau_s}(nuU_w) \sim_t \phi_{\tau_s}(n'u'U_{w'})$. Thus we assume $\ell(swt) = \ell(wt) -1$. Then $u_s \in U_{wt}$. Since $\ell(wt)-1 = \ell(swt) \geq \ell(sw) -1 = \ell(w)$, we have $\ell(wt) = \ell(w) +1$ and thus $(n^{-1} n')^{u_s} \in U_w \cup U_{wt} = U_{wt}$. This implies $n^{-1}n' \in U_{wt}$. By Lemma \ref{conditionF} we infer $swt = w$. Now Lemma \ref{NsUw}$(a)$ yields $(n^{-1} n')^{\tau_s} \in N_s \cap U_{swt} = N_s \cap U_w \leq N_s \cap U_{wt}$ and, as $u_s \in U_{wt}$, $(n^{-1} n')^{\tau_s u_s} \in U_{wt}$. In particular, $\phi_{\tau_s}(nuU_w) \sim_t \phi_{\tau_s}(n'u'U_{w'})$.
		
		If $u=1\neq u'$ we have $(n^{-1})n' u_s \in U_w \cup U_{wt}$ and $\tau_s.nU_w = n^{\tau_s} U_{sw}, \tau_s.n'u_sU_{w'} = (n')^{\tau_s} u_s U_{w'}$. If $\ell(swt) = \ell(wt) +1$, we would have $U_w, U_{wt} \leq N_s$ and hence $u_s \in N_s$. Thus we have $\ell(swt) = \ell(wt) -1$. Since $\ell(sw') = \ell(w') +1$ and $w' \in \{w, wt\}$, we deduce $w = w'$. As $\ell(sw) = \ell(w)+1$, we obtain $\ell(wt) -1 = \ell(swt) \geq \ell(sw) -1 = \ell(w)$. This yields $\ell(wt) = \ell(w) +1$ and hence $swt = w$ as before. This implies $w' = w = swt \in \{ sw, swt \}$ and $U_w \leq U_{wt}$. Thus we obtain $(n^{-1})n' u_s \in U_{wt}$ and hence $(n^{-1})n' \in U_{wt}$. Using Lemma \ref{NsUw}$(a)$ we obtain $(n^{-1} n')^{\tau_s} \in U_{swt} \leq U_{sw}$ (since $\ell(swt) = \ell(sw)-1$). This implies $(n^{-1} n')^{\tau_s} u_s \in U_{sw}$ and hence $\phi_{\tau_s}(nU_w) \sim_t \phi_{\tau_s}(n'u'U_{w'})$. The case $u \neq 1 = u'$ is similar.
		
		\item Without loss of generality we assume $\ell(sw) = \ell(w) -1$ and $\ell(sw') = \ell(w') +1$. This implies $w \neq w'$ and hence $w' = wt$. Thus $\ell(wt) = \ell(w') = \ell(sw') -1 \leq \ell(sw) = \ell(w) -1$ and hence $\ell(wt) = \ell(w) -1$. Since $\ell(swt) = \ell(w)$, Lemma \ref{conditionF} implies $w = swt$.
		
		Now we have $nuU_w = nU_w$ and $\tau_s.nU_w = n^{\tau_s}U_{sw}$. If $u' = 1$, the claim follows from $(a)$. Thus we assume $u' = u_s$. Then $\tau_s.n'u_sU_{w'} = (n')^{\tau_s}u_s U_{w'}$. Since $w' = wt = sw \in \{ sw, swt \}$ it suffices to show that $(n^{-1} n')^{\tau_s} u_s \in U_{sw} \cup U_{swt}$. As $\ell(wt) = \ell(w) -1$, we have $U_{wt} \leq U_w$. Because $\ell(sw) = \ell(w) -1$ and $n^{-1} n' u_s \in U_w \cup U_{wt} = U_w$ we have $u_s \in U_w$ and hence $n^{-1} n' \in U_w$. Using Lemma \ref{NsUw}$(a)$ we deduce $(n^{-1} n')^{\tau_s} \in U_{sw}$. Since $\ell(swt) = \ell(w) = \ell(sw) +1$, we obtain $U_{sw} \leq U_{swt}$. This implies $(n^{-1} n')^{\tau_s} u_s \in U_{swt}$ and we obtain $\phi_{\tau_s}(nU_w) \sim_t \phi_{\tau_s}(n'u'U_{w'})$.
	\end{enumerate}
	
	\medskip
	\noindent \emph{The assignment $g \mapsto \phi_g$ for $g\in U_+ \cup \{\tau_s\}$ extends to a homomorphism $P_s \to \Aut(\Cbf)$:} For this we need to consider a presentation of $P_s$ (cf.\ Remark \ref{Remark: Presentation of Ps}) and show that every relation of $P_s$ acts trivial on the chamber system $\Cbf$. Since the action of $U_+ \leq P_s$ is via multiplication from the left it suffices to consider relations concerning $\tau_s$. As we have already seen before, $\tau_s^2$ acts trivial. Suppose $m, m' \in N_s$ with $\tau_s m \tau_s = \tau_s(m) = (m')^{-1}$. Then
	\[ \tau_s m \tau_s m'.nuU_w = \tau_sm. (m'n)^{\tau_s} (\tau_s.uU_w) = (m(m'n)^{\tau_s})^{\tau_s} uU_w = m^{\tau_s}m'nuU_w = nuU_w \]
	Thus it suffices to show that $(u_s \tau_s)^3$ acts trivial on $\Cbf$. As $(u_s \tau_s)^3. nuU_w = n^{(\tau_s u_s)^3} \cdot (u_s \tau_s)^3. uU_w$, we can assume that $n=1$, since $(u_s \tau_s)^3$ acts trivial on $N_s$ by Corollary \ref{ustaus}. If $\ell(sw) = \ell(w) -1$, then $uU_w = U_w = u_s U_w$ and we obtain the following:
	\[ (u_s \tau_s)^3.uU_w = (u_s \tau_s)^2.u_sU_{sw} = u_s \tau_s. U_{sw} = u_sU_w = U_w \]
	Thus we can assume that $\ell(sw) = \ell(w) +1$. We distinguish the cases $u = 1$ and $u = u_s$:
	\begin{align*}
		(u_s \tau_s)^3.U_w &= (u_s \tau_s)^2. U_{sw} = u_s \tau_s. u_s U_w = U_w \\
		(u_s \tau_s)^3. u_s U_w &= (u_s \tau_s)^2. U_w = u_s \tau_s. U_{sw} = u_s U_w
	\end{align*}
	
	\medskip
	\noindent \emph{The homomorphism $P_s \to \Aut(\Cbf)$ is injective:} We have to show that each $1 \neq g\in P_s$ induces a non-trivial automorphism of the chamber system. We first consider $1 \neq g\in \Sym(3) = \{ 1, u_s, u_s \tau_s, u_s \tau_s u_s, \tau_s u_s, \tau_s \}$. Then we have the following:
	\allowdisplaybreaks
	\begin{align*}
		u_s. U_{1_W} = u_s U_{1_W}, \,\, u_s \tau_s. U_{1_W} = U_s, \,\, u_s \tau_s u_s. U_s = u_s U_{1_W}, \,\, \tau_s u_s. U_s = U_{1_W}, \,\, \tau_s. U_{1_W} = U_s
\end{align*}
	Thus each $1 \neq g\in \Sym(3)$ acts non-trivial. Now we consider the general case. Let $1 \neq g\in P_s$. Then there exist $x\in \Sym(3), n\in N_s$ such that $g = xn$. If $x=1$, we have $g.n^{-1}U_{1_W} = U_{1_W} \neq n^{-1} U_{1_W}$. Otherwise the let $c \in \C$ be as above such that $x.c \neq c$. Then $g.n^{-1}c \neq n^{-1} c$ and the claim follows.
\end{proof}

\begin{theorem}\label{Theorem: braid relations act trivial}
	We have $(\tau_s \tau_t)^{m_{st}} = \id \in \Aut(\Cbf)$ for all $s, t\in S$ with $m_{st} < \infty$.
\end{theorem}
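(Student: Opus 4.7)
Write $\zeta := (\tau_s \tau_t)^{m_{st}}$ and aim to show $\zeta.gU_w = gU_w$ for every chamber $gU_w \in \Cbf$. The starting observation is that on chambers with trivial $U_s$- (resp.\ $U_t$-)~component the formula of Proposition \ref{Proposition: Ps acts on C} collapses: both branches give $\tau_s.U_w = U_{sw}$, and similarly $\tau_t.U_w = U_{tw}$. Hence $\zeta.U_w = U_{(st)^{m_{st}}w} = U_w$ for every $w\in W$, so $\zeta$ already fixes every chamber of the form $U_w$.

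For a general chamber $gU_w$ the strategy is to apply $\tau_s$ and $\tau_t$ alternately and track the intermediate chambers through all $m_{st}$ steps. At each step one must put the current chamber into the form $nuU_{w'}$ with $n\in N_s, u\in U_s$ (before applying $\tau_s$) or $n\in N_t, u\in U_t$ (before applying $\tau_t$); both decompositions exist uniquely by Lemma \ref{UplusNsUs} applied to $s$ and to $t$, but passing from one decomposition to the other uses the commutation relations recorded by the commutator blueprint. The behaviour of the $n$-part under $\tau_s$ and $\tau_t$ is controlled by Proposition \ref{Proposition: Existence of taus} and the Weyl-invariance of $\mathcal{M}$, while the behaviour of the $u$-part is dictated by the two-branch case distinction in the definition of the action. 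Since left multiplication by $N_s$ (resp.\ $N_t$) intertwines nicely with $\tau_s$ (resp.\ $\tau_t$), it should suffice to verify that $\zeta$ fixes a small family of base chambers -- those of the form $uU_w,\ uu'U_w,\ uu'u''U_w, \dots$ with $u\in U_s\cup\{1\}$, $u'\in U_t\cup\{1\}$, and so on, for at most $m_{st}$ alternations -- and then propagate the identity to all $gU_w$ by the $n$-equivariance.

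The verification then proceeds by a case analysis on $m_{st}\in\{2,3,4,6\}$, using the commutation relations of (CB$2$) and Example \ref{exprank2rgd} at every step, in the same spirit as the case-by-case calculation of $(u_s\tau_s)^3 = 1$ carried out in Lemma \ref{tausV}. The main obstacle I expect is the bookkeeping required to keep track of which branch of the action formula applies: after each application of $\tau_s$ the new subscript $w'$ may satisfy $\ell(sw') = \ell(w') + 1$ or $\ell(sw') = \ell(w') - 1$, and the previous $u$-component may be trivial or not, and similarly when $\tau_t$ is applied next. I expect the $G_2$ case ($m_{st} = 6$) to be the most laborious, because of the asymmetric commutation relations of type $G_2$. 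The faithfulness of $\mathcal{M}$ is essential for performing intermediate rewritings unambiguously inside $U_+$, while the Weyl-invariance guarantees that the commutation identities survive each $\tau$-transport from one subscript to the next, so that the alternating product $\zeta$ ultimately returns every chamber to itself.
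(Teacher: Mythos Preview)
Your overall plan --- reduce to a controllable family of chambers via some ``$n$-equivariance'' and then verify the braid relation there by hand --- matches the paper's architecture, but the proposal is too vague at exactly the step where the real work happens, and it misses a key shortcut.

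The reduction step does not work as you describe it. You invoke that left multiplication by $N_s$ intertwines with $\tau_s$ and left multiplication by $N_t$ intertwines with $\tau_t$; this is correct for a \emph{single} $\tau_s$ or $\tau_t$, but $\zeta = (\tau_s\tau_t)^{m_{st}}$ alternates between them, and after one application of $\tau_s$ your element $\tau_s(n)$ need not lie in $N_t$. What is actually needed is a single subgroup normalised by both $\tau_s$ and $\tau_t$ and large enough that $U_+$ decomposes over it. The paper introduces precisely this: the normal subgroup $N_{s,t} \trianglelefteq U_+$ generated by $\{x^{-1}u_\alpha x \mid x\in U_{s,t},\ \alpha \in \Phi_+ \setminus \Phi_+^{\{s,t\}}\}$, and proves (Step~1, deferred to a lengthy appendix with a case analysis over all $u\in U_{s,t}$ --- up to $64$ cases for $m_{st}=6$) that $[\zeta, n] = 1$ in $P_s *_{U_+} P_t$ for all $n\in N_{s,t}$. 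This is the hard computation, not something that ``intertwines nicely''; your sketch hides it.

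Once that reduction is in place you are left with chambers $uU_w$, $u\in U_{s,t}$. Here the paper does \emph{not} do the direct case analysis you propose. Instead it observes (Step~2) that the sub-chamber system $\Cbf_J = \{ uU_w \mid u\in U_{s,t},\ w\in\langle s,t\rangle\}$ is the thick spherical building attached to the rank-$2$ RGD-system over $\FF_2$, and (Step~3) that $\zeta$ fixes the standard apartment $\{U_w \mid w\in\langle s,t\rangle\}$ pointwise --- exactly your opening observation. Since $\zeta$ fixes two opposite chambers and, because panels have size~$3$, also the panels through $U_{1_W}$, the rigidity Theorem~\ref{Theorem5.205AB08} forces $\zeta$ to be the identity on $\Cbf_J$. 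This replaces your proposed $m_{st}$-dependent brute-force on base chambers by a one-line appeal to spherical-building rigidity.
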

\begin{proof}
	We first introduce some notation. For $J \subseteq S$ we define $\Phi^J := \{ w\alpha_s \mid s\in J, w\in \langle J \rangle \}$ and $\Phi_+^J := \Phi^J \cap \Phi_+$. Moreover, we define for all $s \neq t\in S$ the subgroup $U_{s, t} := \langle U_{\alpha} \mid \alpha \in \Phi_+^{\{s, t\}} \rangle$ and $N_{s, t} := \langle x^{-1} U_{\alpha} x \mid x \in U_{s, t}, \alpha \in \Phi_+ \backslash \Phi_+^{\{s, t\}} \rangle$. Then $N_{s, t}$ is a normal subgroup of $U_+$ which is stabilized by $\tau_s$ and by $\tau_t$.
	
	\medskip
	\noindent \emph{Step 1: We have $[ (\tau_s \tau_t)^{m_{st}}, n ] = 1$ in the free product with amalgamation $P_s *_{U_+} P_t$ for all $s\neq t \in S$ with $m_{st} < \infty$ and all $n\in N_{s, t}$.} The verification is technical but straight forward. For the proof we refer to Lemma \ref{Lemma: mst=2}, \ref{Lemma: mst=3}, \ref{Lemma: mst=4} and \ref{Lemma: mst=6} in the appendix.
	
	\medskip
	\noindent \emph{Step 2: The rank $2$ residues of $\Cbf$ are spherical buildings.} Suppose $s \neq t\in S$ with $m_{st} < \infty$ and let $J := \{ s, t \}$. Since $\mathcal{M}$ is faithful, the mapping $U_{r_J} \to U_+$ is injective. Considering the sub-chamber system $\Cbf_J = (\C_J, (\sim_j)_{j\in J} )$ with $\C_J = \{ uU_w \mid u \in U_{s, t}, w\in \langle J \rangle \}$. This is exactly the chamber system which we get from the RGD-system over $\FF_2$ of type $I_2(m_{st})$. This chamber system is a building by \cite[Exercise $8.36(b)$]{AB08}.
	
	\medskip
	\noindent \emph{Step 3: For $s\neq t\in S$ with $m_{st} < \infty$ we have $(\tau_s \tau_t)^{m_{st}} = \id$.} We put $J := \{s, t\}$.	For $w\in W$ we let $w' \in W, w_J \in \langle J \rangle$ be such that $w = w_J w'$ and $\ell(sw') = \ell(w') +1 = \ell(tw')$. Then the action of $\tau_s$ on $uU_w$ only depends on $u$ and $w_J$ and is independent on $w'$, i.e.\ for $u, u' \in U_{s, t}$ and $w_J' \in \langle J \rangle$ with $\tau_s.uU_{w_J} = u' U_{w_J'}$, we have $\tau_s.uU_w = u' U_{w_J'w'}$. Thus it suffices to show the claim for $w\in \langle J \rangle$. We restrict the action of $(\tau_s \tau_t)^{m_{st}}$ on $\Cbf$ to the chambers of the form $uU_w$ with $u\in U_{s, t}$ and $w\in \langle J \rangle$. 
	
	Restricting $\tau_s, \tau_t$ to the sub-chamber system $\Cbf_J$, we infer that $(\tau_s \tau_t)^{m_{st}}$ is an automorphism of $\Cbf_J$. By the previous lemma this chamber system is a building of type $(\langle J \rangle, J)$. Since $(\tau_s \tau_t)^{m_{st}}$ fixes all chambers $U_w$ with $w\in \langle J \rangle$, it fixes the two opposite chambers $U_{1_W}$ and $U_{r_{J}}$. Since every panel contains exactly three chambers, the automorphism $(\tau_s \tau_t)^{m_{st}}$ fixes $R_{\{s\}}(U_{1_W})$ and $R_{\{t\}}(U_{1_W})$. Using Theorem \ref{Theorem5.205AB08}, we obtain $(\tau_s \tau_t)^{m_{st}}.uU_w = uU_w$ for all $u\in U_{s, t}$ and $w\in \langle J \rangle$. This finishes the claim.
\end{proof}

\subsection*{The RGD-system}

\begin{definition}\label{Definition: direct limit G}
	We denote the direct limit of the groups $U_+, (P_s)_{s\in S}, (\langle \tau_s \rangle)_{s\in S}, W$ with canonical inclusions $U_+ \hookrightarrow P_s, \langle \tau_s \rangle \hookrightarrow P_s, \langle \tau_s \rangle \hookrightarrow W, \tau_s \mapsto s$ by $G$
\end{definition}

\begin{lemma}
	Let $s_1, \ldots, s_n, t_1, \ldots, t_m, s, t \in S$ be such that $s_1 \cdots s_n \alpha_s = t_1 \cdots t_m \alpha_t$. Then we have $U_{\alpha_s}^{\tau_n \cdots \tau_1} = U_{\alpha_t}^{\tau_m' \cdots \tau_1'}$ in $G$, where $\tau_i = \tau_{s_i}$ and $\tau_j' = \tau_{t_j}$.
\end{lemma}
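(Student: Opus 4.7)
The plan is to reduce the statement to an intermediate positivity-preserving claim and then apply it. Since $G$ is the direct limit containing $W$ with $\tau_s\mapsto s$, the subgroup $\langle\tau_s:s\in S\rangle$ of $G$ is isomorphic to $W$, so for each $w\in W$ the product $\tilde w:=\tau_{s_1}\cdots\tau_{s_n}$ for any reduced expression $w=s_1\cdots s_n$ is a well-defined element of $G$, and $w\mapsto\tilde w$ is a homomorphism. In particular $U_{\alpha_s}^{\tau_n\cdots\tau_1}=\tilde w U_{\alpha_s}\tilde w^{-1}$, and the lemma becomes: whenever $w\alpha_s=w'\alpha_t$, we have $\tilde w U_{\alpha_s}\tilde w^{-1}=\tilde{w'}U_{\alpha_t}\tilde{w'}^{-1}$.

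The main step is the intermediate claim that for every $v\in W$ and every $\alpha\in\Phi_+$ with $v\alpha\in\Phi_+$, the equality $\tilde v U_\alpha\tilde v^{-1}=U_{v\alpha}$ holds in $U_+\leq G$. To prove it I would fix a reduced expression $v=r_1\cdots r_k$ and set $\alpha^{(i)}:=r_{i+1}\cdots r_k\alpha$ for $0\leq i\leq k$, so $\alpha^{(k)}=\alpha$ and $\alpha^{(0)}=v\alpha$. The crucial observation is that every $\alpha^{(i)}$ is positive. Indeed, the chambers $c_j:=r_kr_{k-1}\cdots r_{k-j+1}$ form the minimal gallery $(1_W,r_k,r_kr_{k-1},\ldots,v^{-1})$; the hypotheses $\alpha,v\alpha\in\Phi_+$ translate to $1_W,v^{-1}\in\alpha$, and since a minimal gallery crosses each wall at most once, the gallery stays in $\alpha$, so $c_{k-i}\in\alpha$, i.e.\ $\alpha^{(i)}\in\Phi_+$. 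Because $\alpha^{(i-1)}=r_i\alpha^{(i)}$ is also positive, we must have $\alpha^{(i)}\neq\alpha_{r_i}$, so Proposition \ref{Proposition: Existence of taus} (applied in $P_{r_i}$) gives $\tau_{r_i}u_{\alpha^{(i)}}\tau_{r_i}=u_{\alpha^{(i-1)}}$ and hence $\tau_{r_i}U_{\alpha^{(i)}}\tau_{r_i}=U_{\alpha^{(i-1)}}$. Iterating this from $i=k$ down to $i=1$ yields $\tilde vU_\alpha\tilde v^{-1}=U_{\alpha^{(0)}}=U_{v\alpha}$.

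Now the lemma follows quickly. Set $\beta:=w\alpha_s=w'\alpha_t$. If $\beta\in\Phi_+$, applying the intermediate claim to $(w,\alpha_s)$ and to $(w',\alpha_t)$ gives that both sides of the desired equation equal $U_\beta$. If $\beta\in\Phi_-$, let $r_\beta\in W$ be the reflection in $\beta$; then $r_\beta w\alpha_s=r_\beta w'\alpha_t=-\beta\in\Phi_+$, so the positive case gives
\[
\widetilde{r_\beta w}U_{\alpha_s}\widetilde{r_\beta w}^{-1}=U_{-\beta}=\widetilde{r_\beta w'}U_{\alpha_t}\widetilde{r_\beta w'}^{-1},
\]
and using the homomorphism property $\widetilde{r_\beta w}=\tilde{r_\beta}\tilde w$ and $\widetilde{r_\beta w'}=\tilde{r_\beta}\tilde{w'}$, conjugating both sides by $\tilde{r_\beta}^{-1}$ recovers the desired equality.

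The main (and essentially only) technical obstacle is ensuring that the inductive walk never leaves the regime in which Proposition \ref{Proposition: Existence of taus} directly applies, i.e.\ that none of the intermediate roots $\alpha^{(i)}$ becomes negative or equal to $\alpha_{r_i}$. Both conditions are handled simultaneously by the minimal-gallery argument above, which is the true heart of the proof; the reduction from negative $\beta$ to positive $-\beta$ is then a routine conjugation trick using that $w\mapsto\tilde w$ is a group homomorphism.
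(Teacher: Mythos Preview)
Your proof is correct and rests on the same core mechanism as the paper's: replace the given words by a reduced expression using that the defining relations of $W$ hold in $G$, then walk through that reduced expression one reflection at a time, invoking $\tau_r u_\gamma \tau_r = u_{r\gamma}$ (Proposition~\ref{Proposition: Existence of taus}) at each step, after verifying via a minimal-gallery argument that every intermediate root stays positive and is distinct from the simple root being reflected.

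The organization differs slightly. The paper reduces directly to showing $U_{\alpha_s}^{\tau_{f_k}\cdots\tau_{f_1}} = U_{\alpha_t}$ for a reduced expression $f_1\cdots f_k$ of $(t_1\cdots t_m)^{-1}s_1\cdots s_n$; since both endpoints $\alpha_s,\alpha_t$ are simple (hence positive), the positivity of all intermediates follows at once, and the paper then runs a somewhat redundant ``segment-by-segment'' induction from simple root to simple root. You instead isolate the cleaner intermediate claim ($\tilde v U_\alpha \tilde v^{-1} = U_{v\alpha}$ whenever $\alpha$ and $v\alpha$ are both positive), prove it directly, and then handle the case $\beta\in\Phi_-$ by conjugating with $\tilde r_\beta$. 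Your route is arguably more transparent, and in fact your explicit minimal-gallery justification of the positivity of the intermediate $\alpha^{(i)}$ fills in a step the paper leaves tacit.

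One small remark: you assert that $\langle \tau_s : s\in S\rangle$ is \emph{isomorphic} to $W$. At this point in the paper only the existence of the canonical homomorphism $W\to G$ is available (injectivity is established later, as a consequence of Theorem~\ref{RGDsystem}); but you only ever use the homomorphism property, so this does not affect the argument. It would be cleaner to simply say that $\tilde w$ denotes the image of $w$ under the canonical map $W\to G$.
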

\begin{proof}
	The claim follows if $U_{\alpha_s}^{\tau_n \cdots \tau_1 \tau_1' \cdots \tau_m'} = U_{\alpha_t}$. Suppose $f_1, \ldots, f_k \in S$ with $\ell(f_1 \cdots f_k) = k$ and $f_1 \cdots f_k = t_m \cdots t_1 s_1 \cdots s_n$. Then $f_k \cdots f_1 = s_n \cdots s_1 t_1 \cdots t_m$ and since every relation in $W$ is a relation in $G$, we obtain $\tau_{f_k} \cdots \tau_{f_1} = \tau_{s_n} \cdots \tau_{s_1} \tau_{t_1} \cdots \tau_{t_m}$.
	
	Let $i := \max\{ 1, \ldots, k \mid \exists r\in S: f_i \cdots f_k \alpha_s = \alpha_r \}$. For $g := f_1 \cdots f_k$ we have $g \alpha_s = \alpha_t$ and hence $g^{-1} \in \alpha_s$. This implies $\ell(gs) = \ell((gs)^{-1}) = \ell(sg^{-1}) >\ell(g^{-1}) = \ell(g)$. This implies $f_k \neq s$ and hence $f_k\alpha_s \in \Phi_+$. Thus the roots $\alpha_s, f_k \alpha_s, \ldots, f_i \cdots f_k \alpha_s = \alpha_r$ are all positive roots and we obtain $U_{\alpha_s}^{\tau_{f_k} \cdots \tau_{f_i}} = U_{f_i \cdots f_k \alpha_s} = U_{\alpha_r}$ in $G$. If $i=1$ we are done. Otherwise we repeat the argument with $g := f_1 \cdots f_{i-1}$. After finitely many steps we are done.
\end{proof}

\begin{definition}
	Let $\alpha \in \Phi$ be a root. Then there exist $w\in W$ and $s\in S$ with $\alpha = w\alpha_s$. Let $s_1, \ldots, s_k \in S$ be such that $w = s_1 \cdots s_k$ and let $\tau_i := \tau_{s_i}$. Then we define
	\[ U_{\alpha} := U_{\alpha_s}^{\tau_k \cdots \tau_1} \]
	In view of the previous lemma, the group $U_{\alpha}$ is well-defined. Moreover, we let $\mathcal{D}_{\mathcal{M}} := (G, (U_{\alpha})_{\alpha \in \Phi})$.
\end{definition}

\begin{theorem}\label{RGDsystem}
	$\mathcal{D}_{\mathcal{M}}$ is an RGD-system and $\mathcal{M}$ is integrable.
\end{theorem}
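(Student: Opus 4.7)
The plan is to verify the five RGD-axioms for $\mathcal{D}_{\mathcal{M}} = (G, (U_\alpha)_{\alpha \in \Phi})$ using the action of $G$ on $\Cbf$ as the central tool. By the universal property of direct limits applied to the inductive system defining $G$ (Definition \ref{Definition: direct limit G}), combined with Proposition \ref{Proposition: Ps acts on C} and Theorem \ref{Theorem: braid relations act trivial}, the compatible actions of $U_+$ (by left multiplication), of each $P_s$, and of $W$ on $\Cbf$ assemble into a single homomorphism $G \to \Aut(\Cbf)$. A key consequence is that each canonical inclusion $P_s \hookrightarrow G$ is injective, since the composition $P_s \to G \to \Aut(\Cbf)$ is already injective by Proposition \ref{Proposition: Ps acts on C}.

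Granted this, (RGD0), (RGD2), (RGD3), and (RGD4) are quick. Axiom (RGD0) holds because every $U_\alpha$ is a conjugate of some $U_{\alpha_s} = \langle u_s \rangle$ of order $2$, with $u_s$ non-trivial in $G$ as it moves $U_{1_W}$ on $\Cbf$. For (RGD2), I set $u' = u'' = \tau_s u_s \tau_s \in U_{-\alpha_s}$; a short calculation inside $\Sym(3) \leq P_s$ using $(u_s\tau_s)^3 = 1$ gives $m(u_s) := u' u_s u'' = \tau_s$, and the well-definedness lemma preceding the theorem implies $\tau_s U_\beta \tau_s = U_{s\beta}$ for every $\beta \in \Phi$. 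For (RGD3), the element $\tau_s u_s \tau_s$ is non-trivial in $P_s$ (it is a non-identity element of $\Sym(3)$) and hence non-trivial in $G$; however, it fixes $U_{1_W}$ via $U_{1_W} \mapsto U_s \mapsto U_s \mapsto U_{1_W}$, while every non-trivial element of $U_+$ moves $U_{1_W}$ by left multiplication, so $\tau_s u_s \tau_s \notin U_+$. For (RGD4), $G$ is generated by $U_+$ and $\{\tau_s\}_{s \in S}$ by construction, and since $\tau_s = m(u_s) \in \langle U_{\alpha_s}, U_{-\alpha_s} \rangle$ and $U_+ = \langle u_\alpha \mid \alpha \in \Phi_+ \rangle$, we obtain $G = \langle U_\alpha \mid \alpha \in \Phi \rangle$, so (RGD4) holds with $H = 1$.

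The substantive axiom is (RGD1). For a prenilpotent pair $\{\alpha, \beta\}$ of positive roots, the commutator blueprint directly provides $[u_\alpha, u_\beta] = \prod_{\gamma \in M_{\alpha, \beta}^G} u_\gamma$ in $U_+ \leq G$ with $M_{\alpha,\beta}^G \subseteq (\alpha, \beta)$, so $[U_\alpha, U_\beta] \leq U_{(\alpha,\beta)}$. For a general prenilpotent pair, I pick $v \in \alpha \cap \beta$; then $\{v^{-1}\alpha, v^{-1}\beta\} \subseteq \Phi_+$ is prenilpotent and $(v^{-1}\alpha, v^{-1}\beta) = v^{-1}(\alpha,\beta)$. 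Choosing a lift $\hat{v} \in G$ of $v$ as a product of $\tau_{s_i}$'s, the relation $\hat{v} U_\gamma \hat{v}^{-1} = U_{v\gamma}$ (a consequence of the definition of $U_\gamma$ together with the well-definedness lemma) transports the positive-case inclusion back to $\{\alpha,\beta\}$.

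Integrability is then immediate: the commutation relations of the positive root groups in $G$ are by construction exactly those prescribed by $\mathcal{M}$, so the commutator blueprint $\mathcal{M}_{\mathcal{D}_{\mathcal{M}}}$ extracted as in Example \ref{expintrgrable} coincides pointwise with $\mathcal{M}$. The main obstacle throughout is ensuring that the elements of each $P_s$ do not collapse inside the direct limit $G$; this is precisely what the faithful action on $\Cbf$ guarantees, so granted the machinery of Sections \ref{Section: Constructing the rank 1 parabolics} and \ref{Section: Action of Ps} the theorem itself reduces to unwinding definitions and a few small computations in $\Sym(3)$.
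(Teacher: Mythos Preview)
Your proof is correct and follows essentially the same approach as the paper: both use the action of $G$ on $\Cbf$ to deduce injectivity of $P_s \to G$, then verify the axioms by conjugating a general prenilpotent pair into $\Phi_+$ for (RGD1) and using the $\Sym(3)$-relation $(u_s\tau_s)^3=1$ to identify $m(u_s)=\tau_s$ for (RGD2). The only minor variation is your (RGD3) argument, where you show $u_{-\alpha_s}=\tau_s u_s\tau_s$ fixes the chamber $U_{1_W}$ while nontrivial elements of $U_+$ do not; the paper instead argues that $\tau_s\notin U_+$ inside $P_s$ and then writes $u_{-\alpha_s}=u_s\tau_s u_s$, but both routes rest on the same injectivity of $P_s\hookrightarrow G$.
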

\begin{proof}
	It is a consequence of Proposition \ref{Proposition: Ps acts on C} and Theorem \ref{Theorem: braid relations act trivial} that for each $s\in S$ the homomorphism $P_s \to G$ is injective. We consider the different axioms:
	\begin{enumerate}[label=(RGD\arabic*)] \setcounter{enumi}{-1}
		\item The mappings $P_s \to G$ are injective and hence the groups $U_{\alpha}$ are non-trivial.
		
		\item Let $\{ \alpha, \beta \} \subseteq \Phi$ be a prenilpotent pair with $\alpha \neq \beta$. Then there exists $w\in W$ such that $\{ w\alpha, w\beta \} \in \P$. By definition of the root groups and the commutator blueprint we deduce ($\tau_w$ is a product of suitable $\tau_s$)
		\begin{align*}
			[ U_{\alpha}, U_{\beta} ] = [U_{w\alpha}, U_{w\beta}]^{\tau_w} &\leq \langle U_{\gamma} \mid \gamma \in (w\alpha, w\beta) \rangle^{\tau_w} \\
			&= \langle U_{w^{-1}\gamma} \mid \gamma \in (w\alpha, w\beta) \rangle \\
			&= \langle U_{\gamma} \mid \gamma \in (\alpha, \beta) \rangle
		\end{align*}
		
		\item For $s\in S$ we have $(u_s \tau_s)^3 = 1$ and hence $\tau_s = \tau_s (u_s \tau_s)^3 = u_{-s} u_s u_{-s}$ by Corollary \ref{ustaus}. Let $\alpha \in \Phi$ be a root. Then there exist $w\in W, t\in S$ such that $\alpha = w\alpha_t$. Let $s_1, \ldots, s_k \in S$ be such that $w = s_1 \cdots s_k$ and let $\tau_i := \tau_{s_i}$. Then $s\alpha = ss_1 \cdots s_k\alpha_t$ and we deduce
		\[ U_{\alpha}^{\tau_s} = \left( U_{\alpha_t}^{\tau_k \cdots \tau_1} \right)^{\tau_s} = U_{\alpha_t}^{\tau_k \cdots \tau_1 \tau_s} = U_{s\alpha} \]
		
		\item Since $P_s \to G$ is injective, we have $\tau_s \notin U_+$. As $U_+^{u_s} = U_+$ and $(u_s \tau_s)^3 = 1$, we infer $u_{-s} = \tau_s u_s \tau_s = u_s \tau_s u_s \notin U_+^{u_s} = U_+$
		
		\item Since $G$ is generated by $U_{\alpha}$ and $\tau_s$, it is generated by all root groups.
	\end{enumerate}
	Note that $\mathcal{M}_{\mathcal{D}}$ is a commutator blueprint of type $(W, S)$. By definition we have $M_{\alpha, \beta}^G = M(\mathcal{D})_{\alpha, \beta}^G$ for each $(G, \alpha, \beta) \in \mathcal{I}$. We deduce that $\mathcal{M}$ is integrable.
\end{proof}

\begin{remark}\label{Remark: FPRS}
	In \cite{CR09}, Caprace and Rémy have introduced property (FPRS) for RGD-systems and we refer to loc.\ cit.\ for the precise definition. It is mentioned in \cite[Remark before Lemma $5$]{CR09} that M\"uhlherr announced the construction of an example of an RGD-system of right-angled type and of rank $3$ which does not satisfy property (FPRS). He informed the author that this construction is not available in form of a preprint.
\end{remark}

\begin{corollary}\label{Corollary: 2^N examples}
	Assume that $(W, S)$ is a universal Coxeter system of rank $n \geq 2$. Then there exists an RGD-system of type $(W, S)$ which does not satisfy property (FPRS).
\end{corollary}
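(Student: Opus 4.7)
The plan is to invoke Theorem \ref{Main result: Theorem} to reduce the corollary to the construction of a suitable commutator blueprint, and then to import such a blueprint from the companion paper \cite{BiConstruction}. By Theorem \ref{Main result: Theorem}, producing an RGD-system of type $(W,S)$ over $\FF_2$ is equivalent to producing a faithful and Weyl-invariant commutator blueprint $\mathcal{M}$ of type $(W,S)$. In the universal case, Remark 2 in the introduction guarantees that faithfulness is automatic via the tree-product description of $U_+$ together with \cite[Ch.~4.4]{Se79}. Hence the task reduces to constructing a Weyl-invariant commutator blueprint $\mathcal{M}$ of type $(W,S)$ such that $\mathcal{D}_{\mathcal{M}}$ fails (FPRS).

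The core combinatorial step is to exhibit such an $\mathcal{M}$ with non-trivial commutators between root groups whose walls are arbitrarily far apart, i.e.\ prenilpotent pairs $\{\alpha_k,\beta_k\}\in\mathcal{P}$ with the combinatorial distance $d(\partial\alpha_k,\partial\beta_k)\to\infty$ and $M_{\alpha_k,\beta_k}^G\neq\emptyset$ for suitable galleries $G$. In rank $2$, where $(W,S)$ is of type $\tilde{A}_1$, such blueprints are produced in \cite{BiConstruction}, and they correspond via Theorem \ref{Main result: Theorem} to Tits's family of trivalent Moufang twin trees (item (c) of the introduction). For universal rank $\geq 3$, one extends these constructions to the full Coxeter group, again using \cite{BiConstruction}; the universal hypothesis is crucial because it decouples the local commutator data at different rank $2$ residues and lets one freely arrange unbounded commutation relations while preserving (CB3) and Weyl-invariance.

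The main obstacle is to verify that the RGD-system $\mathcal{D}_{\mathcal{M}}$ so produced genuinely fails property (FPRS), rather than merely containing unbounded wall-distances between non-commuting root groups. Unpacking the definition from \cite[Section 2.1]{CR09}, one must exhibit, for every $C\geq 0$, a root $\alpha$ and a chamber $c$ at combinatorial distance $>C$ from $\partial\alpha$ which is moved by the non-trivial element of $U_\alpha$. The chamber system $\Cbf$ and the action constructed in Section \ref{Section: Action of Ps}, combined with the long-range commutator relations $[u_{\alpha_k},u_{\beta_k}]\neq 1$, produce such chambers explicitly: conjugation by the non-trivial element of $U_{\alpha_k}$ alters generators of $U_{\beta_k}$ acting on chambers at distance $>C$ from $\partial\alpha_k$, witnessing the failure of (FPRS) and completing the proof.
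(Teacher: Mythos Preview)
Your reduction via Theorem~\ref{Main result: Theorem} to a Weyl-invariant commutator blueprint, together with the observation that faithfulness is automatic in the universal case, matches the paper's strategy exactly. The gap is in your final paragraph, where you mis-state the negation of (FPRS). Property (FPRS) from \cite[2.1]{CR09} says that whenever $d(c_+,-\alpha_n)\to\infty$ one has $r(c_+,U_{\alpha_n})\to\infty$; its failure therefore requires a \emph{fixed} radius $R$ and roots $\alpha_n$ with $d(c_+,-\alpha_n)\to\infty$ such that each $U_{\alpha_n}$ moves some chamber inside $B(c_+,R)$. Your formulation --- for every $C$ a root $\alpha$ and a chamber $c$ with $d(c,\partial\alpha)>C$ moved by $U_\alpha$ --- is satisfied in every thick building (take any chamber deep inside $-\alpha$) and is not the negation of (FPRS). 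Consequently the sentence about ``conjugation by $U_{\alpha_k}$ altering generators of $U_{\beta_k}$'' does not deliver what is needed; knowing only that $[u_{\alpha_k},u_{\beta_k}]\neq 1$ with $d(\partial\alpha_k,\partial\beta_k)\to\infty$ is not enough.

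The paper's argument is sharper and uses the \emph{value} of the commutator, not merely its non-triviality. One fixes $s\neq t\in S$ and takes the blueprint $\mathcal{M}(\NN,(J_n)_{n\in\NN},s,t)$ of \cite[Theorem~4.6]{BiConstruction}, arranged so that for the roots $\alpha_k$ along an $(s,t,s,\ldots)$-gallery one has $[u_{\alpha_1},u_{\alpha_{2i-1}}]=\prod_{j\in J_i}u_{\alpha_{2j}}$ with $1\in J_i$ for every $i$. If (FPRS) held, then for $i$ large the element $u_{\alpha_{2i-1}}$ would fix $B(c_+,2)$ pointwise; since $u_{\alpha_1}$ fixes $c_+$ it stabilises $B(c_+,2)$ setwise, hence the commutator would also fix $B(c_+,2)$ pointwise. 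This contradicts the fact that the commutator always involves $u_{\alpha_2}$, which does not fix $B(c_+,2)$. The missing ingredient in your sketch is thus that one of the two roots must stay anchored at $c_+$ (here $\alpha_1=\alpha_s$), and the commutator must be a specific element involving a root group whose wall is close to $c_+$.
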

\begin{proof}
	Let $s\neq t \in S$ and for every $n \in \NN$ we let $J_n \subseteq \{ 1, \ldots, n \}$ with $1 \in J_n$. We consider the Weyl-invariant commutator blueprint $\mathcal{M}(\NN, (J_n)_{n\in \NN}, s, t)$ from \cite[Theorem $4.6$]{BiConstruction}. As mentioned in the introduction, it is also faithful and Theorem \ref{RGDsystem} implies that the commutator blueprint is integrable. We let $\mathcal{D} = (G, (U_{\alpha})_{\alpha \in \Phi})$ be the RGD-system associated with the commutator blueprint.
	
	In this proof we adopt the notation of \cite[$2.1$]{CR09}. Let $H_n = (c_0, \ldots, c_n) \in \mathrm{Min}(w)$ be of type $(s, t, s, t, \ldots)$ with $\ell(w) = n$ (i.e.\ $H_3$ has type $(s, t, s)$) and define $\alpha_n \in \Phi_+$ to be the root containing $c_{n-1}$ but not $c_n$. Then $\lim_{i \to \infty} \ell(1_W, -\alpha_{2i-1}) = \infty$. Assume $\mathcal{D}$ satisfies (FPRS). Then there exists $n_0 \in \NN$ such that for all $i \geq n_0$ the root group $U_{\alpha_{2i-1}}$ fixes the ball $B(c_+, 2)$ pointwise. But then $[u_{\alpha_1}, u_{\alpha_{2i-1}}] = \prod_{j \in J_i} u_{\alpha_{2j}}$ would also fix $B(c_+, 2)$ pointwise, which is a contradiction, as $1 \in J_i$.
\end{proof}

\appendix
\section{The braid relations act trivial}\label{Appendix}

We adopt the notation from Theorem \ref{Theorem: braid relations act trivial}: For $J \subseteq S$ we define $\Phi^J := \{ w\alpha_s \mid s\in J, w\in \langle J \rangle \}$ and $\Phi_+^J := \Phi^J \cap \Phi_+$. Moreover, we define for all $s \neq t\in S$ the subgroup $U_{s, t} := \langle U_{\alpha} \mid \alpha \in \Phi_+^{\{s, t\}} \rangle$ and $N_{s, t} := \langle x^{-1} U_{\alpha} x \mid x \in U_{s, t}, \alpha \in \Phi_+ \backslash \Phi_+^{\{s, t\}} \rangle$. Then $N_{s, t}$ is a normal subgroup of $U_+$ which is stabilized by $\tau_s$ and by $\tau_t$. Note that any element in $U_{s, t}$ has a unique expression: Let $G \in \mathrm{Min}_s(r_{\{s, t\}})$ and let $(\alpha_1, \ldots, \alpha_m)$, $m = m_{st}$ be the sequence of roots crossed by $G$. Then any element in $U_{s, t}$ can be written as $\prod_{i=1}^m u_{\alpha_i}^{\epsilon_i}$ with $\epsilon_i \in \{0, 1\}$.

In the appendix we show that $[(\tau_s \tau_t)^{m_{st}}, n] = 1$ holds in $P_s *_{U_+} P_t$ for all $s\neq t \in S$ with $m_{st} < \infty$ and all $n\in N_{s, t}$. It suffices to consider a generating set of $N_{s, t}$, i.e.\ $n \in \{ u^{-1} u_{\alpha} u \mid u \in U_{s, t}, \alpha \in \Phi_+ \backslash \Phi_+^{\{s, t\}} \}$. We abbreviate $u_{ws} := u_{w\alpha_s} \in U_{ws} \backslash \{1\}$, i.e.\ $u_{ts} = u_{t\alpha_s}$. We will always assume that $-\beta \subseteq \alpha$, if $u_{\beta}$ appears in $u$. Otherwise we can reduce $u$ as we see in the next example.

\begin{example}
	Suppose $\alpha \in \Phi_+ \backslash \Phi_+^{\{s, t\}}$ with $-\alpha_s \not\subseteq \alpha$. Then $\{ \alpha_s, \alpha \} \in \P$ by definition and we have $u_s u_{\alpha} u_s = \left( \prod_{\gamma \in M_{\alpha_s, \alpha}^G} u_{\gamma} \right) u_{\alpha}$ for some $G \in \mathrm{Min}$ with $\alpha_s, \beta \in \Phi(G)$.
\end{example}

\begin{remark}
	Let $\alpha, \beta, \gamma \in \Phi_+$ be pairwise distinct and pairwise prenilpotent and suppose $\alpha \subseteq \gamma$. Then for $\delta \in (\alpha, \beta)$ we have $\emptyset \neq (-\beta) \cap (-\gamma) \subseteq (-\beta) \cap (-\alpha) \cap (-\gamma) \subseteq (-\delta) \cap (-\gamma)$ and hence $\{ \delta, \gamma \}$ is a prenilpotent pair. This observation will be useful in the following lemmas.
\end{remark}

\begin{convention}
	For short we will write $u_s.n := u_s n u_s$ and $\tau_s.n := \tau_s n \tau_s = \tau_s(n)$.
\end{convention}

\begin{lemma}\label{Lemma: mst=2}
	Suppose $m_{st} = 2$. Then $[(\tau_s \tau_t)^2, n] = 1$ holds for all $n \in N_{s,t}$ in $P_s *_{U_+} P_t$.
\end{lemma}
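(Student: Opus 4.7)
The plan is to verify the commutator identity on a chosen generating set of $N_{s,t}$. First I would note that $N_{s,t}$ is generated by the conjugates $x^{-1} u_\alpha x$ with $x \in U_{s,t} = \{1, u_s, u_t, u_s u_t\}$ and $\alpha \in \Phi_+ \setminus \Phi_+^{\{s,t\}}$, so it suffices to check $[(\tau_s\tau_t)^2, x^{-1}u_\alpha x] = 1$ in each of the four cases for $x$.

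Before starting, I would record the simplifications coming from $m_{st}=2$: (i) $st = ts$ in $W$, so $(st)^2 = 1$, $s\alpha_t = \alpha_t$, and $t\alpha_s = \alpha_s$; (ii) by (CB$2$), $[u_s, u_t] = 1$ in $U_+$; (iii) combining (i) with Weyl-invariance and Proposition~\ref{Proposition: Existence of taus}, inside $P_s *_{U_+} P_t$ we obtain the ``cross'' commutations $\tau_s u_t = u_t \tau_s$ and $\tau_t u_s = u_s \tau_t$; and (iv) for $\alpha \in \Phi_+ \setminus \Phi_+^{\{s,t\}}$, each of the four roots $\alpha, s\alpha, t\alpha, st\alpha$ lies in $\Phi_+ \setminus \{\alpha_s, \alpha_t\}$, so Proposition~\ref{Proposition: Existence of taus} yields the clean conjugation $\tau_s u_\beta \tau_s^{-1} = u_{s\beta}$ (and likewise for $\tau_t$).

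For $x = 1$ the computation is immediate: four successive applications of (iv) together with (i) give $(\tau_s\tau_t)^2 u_\alpha (\tau_s\tau_t)^{-2} = u_{(st)^2\alpha} = u_\alpha$. For $x \in \{u_s, u_t, u_s u_t\}$ I would write out the word $(\tau_s\tau_t)^2 \cdot x^{-1} u_\alpha x \cdot (\tau_s\tau_t)^{-2}$ and slide the $u_t$-factors of $x$ across each $\tau_s$, and the $u_s$-factors across each $\tau_t$, using (iii); the residual $\tau_s u_s$ and $\tau_t u_t$ subwords that appear are rewritten via the $\Sym(3)$-braid relations $\tau_s u_s \tau_s = u_s \tau_s u_s$ and $\tau_t u_t \tau_t = u_t \tau_t u_t$ coming from $(u_s\tau_s)^3 = 1$ and $(u_t\tau_t)^3 = 1$ in $P_s$ and $P_t$. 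The central letter $u_\alpha$ is then fixed by the surrounding $(\tau_s\tau_t)^2$ via the $x = 1$ case, and the symmetric placement of $x^{-1}$ and $x$ around $u_\alpha$ ensures that the conjugates of $x$ produced by $\Sym(3)$-manipulations on the two sides collapse back to $x^{-1}$ and $x$.

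The main obstacle is bookkeeping, because the braid relation $\tau_s\tau_t = \tau_t\tau_s$ is not at our disposal --- that is precisely what Theorem~\ref{Theorem: braid relations act trivial} aims to establish on $\Cbf$, with the appendix lemmas as technical inputs. Consequently every simplification has to use only the cross commutations in (iii), the commutation $u_s u_t = u_t u_s$, and the $\Sym(3)$-relations inside $P_s$ and $P_t$. The saving grace is that the alternating structure of $\tau_s\tau_t\tau_s\tau_t$, together with the symmetric placement of $x^{-1}$ and $x$ around the single letter $u_\alpha$, pairs all ``bad'' $\tau_s u_s$ and $\tau_t u_t$ subwords in a cancelling configuration, reducing the verification to the (easy) $x=1$ case.
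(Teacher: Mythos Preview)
Your ingredients (i)--(iv) are all correct, and the $x=1$ case works as you say. The gap is the unjustified claim that for $x \in \{u_s, u_t, u_su_t\}$ the ``bad'' subwords cancel symmetrically. Take $x = u_s$: since conjugation is multiplicative and the middle factor is fixed by the $x=1$ case, your computation reduces to showing $g\,u_\alpha\,g = u_s u_\alpha u_s$, where $g := (\tau_s\tau_t)^2 u_s(\tau_t\tau_s)^2$. Using only (iii) and the $\Sym(3)$-relation $\tau_s u_s\tau_s = u_s\tau_s u_s$ one finds $g = \tau_s u_s(\tau_t\tau_s\tau_t)u_s\tau_s$, and $g = u_s$ would force $\tau_t\tau_s\tau_t = \tau_s$, i.e.\ $(\tau_s\tau_t)^2 = 1$---precisely what is not available in $P_s *_{U_+} P_t$. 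More structurally, the relations you list present $\langle u_s,\tau_s,u_t,\tau_t\rangle$ as the amalgam $(\Sym(3)_s\times\langle u_t\rangle)*_{\langle u_s,u_t\rangle}(\langle u_s\rangle\times\Sym(3)_t)$, in which $\tau_s\tau_t\tau_s\tau_t$ is a reduced word of length~$4$; so $(\tau_s\tau_t)^2$ does not centralize $u_s$, and pure word-manipulation with (i)--(iv) cannot close the argument.

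The missing ingredient is an appeal to the commutator relations in $U_+$, and this is exactly what the paper supplies. Under the standing convention $-\alpha_r \subseteq \alpha$ whenever $u_r$ occurs in $u$, one has $\alpha_r \subseteq r\alpha$, so $\{\alpha_r, r\alpha\}$ is prenilpotent and
\[
u_r\tau_r.u_\alpha \;=\; u_r u_{r\alpha} u_r \;=\; u_{r\alpha}\,[u_{r\alpha},u_r]
\]
expands as a product of $u_\gamma$ with $\gamma \in \Phi_+\setminus\Phi_+^{\{s,t\}}$. This reduces the case $u = u_r$ to finitely many instances of the $x=1$ case; the case $u = u_tu_s$ then follows by one further such expansion. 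The paper packages this as a short induction (peel off one letter $u_r$ from $u$ and verify two hypotheses), using both the cross-commutations you identified \emph{and} the commutator expansion inside $U_+$. Your sketch never invokes the latter, and without it the computation genuinely cycles rather than terminating.
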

\begin{proof}
	It suffices to show that $\tau_s \tau_t.n = \tau_t \tau_s.n$. If $n = u_{\alpha}$, then the claim follows. We will argue by induction: Let $u = u_t u'$ for some $u, u' \in U_{s, t}$. Assume that the braid relation acts trivial on $u'. u_{\alpha} = u' u_{\alpha} (u')^{-1}$ and on $u_t \tau_t. \left( u'. u_{\alpha} \right)$. Then we compute the following:
	\allowdisplaybreaks
	\begin{align*}
		\tau_t \tau_s. \left( u_t u'. u_{\alpha} \right) &= \tau_t \tau_s u_t. \left( u'. u_{\alpha} \right) \\
		&= \tau_t u_t \tau_s. \left( u'. u_{\alpha} \right) \\
		&= \tau_t u_t \tau_t \tau_s \tau_t. \left( u'. u_{\alpha} \right) \\
		&= u_t \tau_t u_t \tau_s \tau_t. \left( u'. u_{\alpha} \right) \\
		&= u_t \tau_t \tau_s. \left( u_t \tau_t. \left( u'. u_{\alpha} \right) \right) \\
		&= u_t \tau_s \tau_t. \left( u_t \tau_t. \left( u'. u_{\alpha} \right) \right) \\
		&= \tau_s u_t \tau_t. \left( u_t \tau_t. \left( u'. u_{\alpha} \right) \right) \\
		&= \tau_s \tau_t. \left( u_t u'. u_{\alpha} \right)
	\end{align*}
	We consider the following cases:
	\allowdisplaybreaks
	\begin{itemize}
		\item $u = u_r$ for $r\in \{s, t\}$: Then $u' = 1$ and $u_r \tau_r. u_{\alpha} = u_{r\alpha} [u_{r\alpha}, u_r]$. Writing $[u_{r\alpha}, u_r] = u_{\gamma_1} \cdots u_{\gamma_k}$, we obtain $\gamma_i \in \Phi_+ \backslash \Phi^{\{s, t\}}$ and the braid relation acts trivial on each $u_{\gamma_i}$.
		
		\item $u = u_t u_s$: Then $u' = u_s$ and $u_t \tau_t. \left( u_s. u_{\alpha} \right) = u_t u_s. u_{t\alpha} = u_s. u_{t\alpha} [u_{t\alpha}, u_t]$. Again, writing $[u_{t\alpha}, u_t] = u_{\gamma_1} \cdots u_{\gamma_k}$ with $\gamma_i \in \Phi_+ \backslash \Phi^{\{s, t\}}$, we either have $\{ \alpha_s, \gamma_i \} \in \P$ or $-\alpha_s \subseteq \gamma_i$. We deduce $u_s. u_{t\alpha} [u_{t\alpha}, u_t] = u_s u_{t\alpha} u_s \prod_{i=1}^k u_s u_{\gamma_i} u_s$. In both cases we already know that the braid relation acts trivial on $u_s u_{\gamma_i} u_s$ and the claim follows. \qedhere
	\end{itemize}
\end{proof}

\begin{lemma}\label{Lemma: mst=3}
	Suppose $m_{st} = 3$. Then $[(\tau_s \tau_t)^3, n] = 1$ holds for all $n \in N_{s,t}$ in $P_s *_{U_+} P_t$.
\end{lemma}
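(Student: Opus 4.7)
The plan is to mimic the proof of Lemma \ref{Lemma: mst=2}. First I would rewrite the claim equivalently (using $\tau_s^2 = \tau_t^2 = 1$) as $\tau_s \tau_t \tau_s . n = \tau_t \tau_s \tau_t . n$ for all $n \in N_{s,t}$, and then reduce the verification to the generating set $\{\, u.u_\alpha : u \in U_{s,t},\ \alpha \in \Phi_+ \setminus \Phi_+^{\{s,t\}} \,\}$ of $N_{s,t}$. By the reduction spelled out in the example just before Lemma \ref{Lemma: mst=2}, I may additionally assume that $-\beta \subseteq \alpha$ for every $u_\beta$ occurring in the reduced expression of $u$.

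For the base case $u = 1$, the hypothesis $\alpha \notin \Phi^{\{s,t\}}$ forces each iterate among $\alpha, t\alpha, st\alpha$ and $\alpha, s\alpha, ts\alpha$ to avoid both $\alpha_s$ and $\alpha_t$, so the formula $\tau_r(u_\beta) = u_{r\beta}$ from Proposition \ref{Proposition: Existence of taus} applies three times in succession; this yields $\tau_s\tau_t\tau_s . u_\alpha = u_{sts\alpha}$ and $\tau_t\tau_s\tau_t . u_\alpha = u_{tst\alpha}$, and the braid relation $sts = tst$ in $W$ closes this case.

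For the inductive step I would write $u = u_r u'$ with $r \in \{s,t\}$ and $u' \in U_{s,t}$ strictly shorter, taking as inductive hypothesis that the claim has already been verified for $u'. u_\alpha$ and for $u_r \tau_r . (u'. u_\alpha)$. My plan is then to follow the same type of rewriting chain used in Lemma \ref{Lemma: mst=2}, but with two substitutions. The commutation $\tau_s u_t = u_t \tau_s$, which holds only when $m_{st} = 2$, is replaced by $\tau_s u_t = u_\rho \tau_s$ and symmetrically $\tau_t u_s = u_\rho \tau_t$, where $u_\rho$ is the root group attached to the middle positive root $\rho := s\alpha_t = t\alpha_s$ of $\Phi_+^{\{s,t\}}$; both identities are immediate from Proposition \ref{Proposition: Existence of taus}. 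In addition, the braid-type identity $\tau_r u_r \tau_r = u_r \tau_r u_r$ holding inside $P_r$ (equivalent to $(u_r \tau_r)^3 = 1$, cf.\ Corollary \ref{ustaus}) plays the role that $\tau_t^2 = 1$ played in the insert/cancel manoeuvres of the earlier proof. Since $\vert U_{s,t}\vert = 8$, a finite case analysis over $u$ then closes the argument.

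The principal obstacle will be the bookkeeping required to shepherd the three root groups $U_{\alpha_s}$, $U_\rho$, $U_{\alpha_t}$ of $U_{s,t}$ through each rewriting chain, and to verify that the reduced-form condition $-\beta \subseteq \alpha$ propagates at every step: for instance, when $\tau_r$ is moved past $u_\beta$ and produces a factor $u_{r\beta}$ inside the conjugator, one must check that $-(r\beta) \subseteq \alpha$ still holds relative to the updated element, so that the inductive hypothesis genuinely applies. These verifications are elementary but numerous, which accounts for the lemma being relegated to the appendix.
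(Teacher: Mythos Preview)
Your proposal is correct and follows essentially the same route as the paper: reformulate as $\tau_s\tau_t\tau_s.n = \tau_t\tau_s\tau_t.n$, handle the base case via $\tau_r(u_\alpha)=u_{r\alpha}$ and the braid relation in $W$, then run a rewriting chain under the two inductive hypotheses (on $u'.u_\alpha$ and on $u_r\tau_r.(u'.u_\alpha)$) followed by a finite case analysis over $u\in U_{s,t}$. One small refinement: rather than the single-step identity $\tau_s u_t = u_\rho \tau_s$, the paper uses its two-fold composite $\tau_t\tau_s u_t = u_s \tau_t\tau_s$ (and symmetrically), which lands back on a simple root group element and keeps the chain free of the middle generator $u_\rho$; also note that $u=u_{st}$ does not decompose as $u_r u'$ with $r\in\{s,t\}$ and is handled by a direct computation, and that the case $u=u_{st}u_s$ requires a sub-split on whether $\{\alpha_t,\alpha\}\in\mathcal P$ or $-\alpha_t\subseteq\alpha$, the latter being reduced to the former by first conjugating by $\tau_t$.
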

\begin{proof}
	It suffices to show that $\tau_s \tau_t \tau_s.n = \tau_t \tau_s \tau_t.n$. If $n = u_{\alpha}$, then the claim follows. As in the case $m_{st} = 2$ we let $u = u_t u'$ for some $u, u' \in U_{s, t}$ and assume that the braid relation acts trivial on $u'. u_{\alpha}$ and on $u_t \tau_t. \left( u'. u_{\alpha} \right)$. Then we compute the following:
	\allowdisplaybreaks
	\begin{align*}
		\tau_s \tau_t \tau_s. \left( u_t u'. u_{\alpha} \right) &= \tau_s u_s \tau_t \tau_s. \left( u'. u_{\alpha} \right) \\
		&= \tau_s u_s \tau_s \tau_t \tau_s \tau_t. \left( u'. u_{\alpha} \right) \\
		&= u_s \tau_s u_s \tau_t \tau_s \tau_t. \left( u'. u_{\alpha} \right) \\
		&= u_s \tau_s \tau_t \tau_s. \left( u_t \tau_t. \left( u'. u_{\alpha} \right) \right) \\
		&= u_s \tau_t \tau_s \tau_t. \left( u_t \tau_t. \left( u'. u_{\alpha} \right) \right) \\
		&= \tau_t \tau_s u_t \tau_t. \left( u_t \tau_t. \left( u'. u_{\alpha} \right) \right) \\
		&= \tau_t \tau_s \tau_t. \left( u_t u'. u_{\alpha} \right)
	\end{align*}
	We distinguish the following cases:
	\begin{itemize}[leftmargin=*]
		\item $u = u_r$ for $r\in \{s, t\}$: Then $u' = 1$ and $u_r \tau_r. u_{\alpha} = u_{r\alpha} [u_{r\alpha}, u_r]$.
				
		\item $u = u_{st}$: Then $(\tau_s \tau_t)^3. \left( u_{ts}. u_{\alpha} \right) = (\tau_s \tau_t)^2 \tau_s. \left( u_s. u_{t\alpha} \right) = \tau_t. \left( u_s. u_{t\alpha} \right) = u_{ts}. u_{\alpha}$.
		
		\item $u = u_t u_s$: Then $u' = u_s$ and $u_t \tau_t. \left( u_s. u_{\alpha} \right) = u_t u_{ts}. u_{t\alpha} = u_{ts}. u_{t\alpha} [u_{t\alpha}, u_t]$.
		
		\item $u = u_t u_{st} u_s = u_s u_t$: Then the claim follows as in the previous case. 
		
		\item $u = u_{st} u_s$: Then $u' = u_{st}$ and we distinguish the following two cases:
		\begin{enumerate}[label=(\alph*)]
			\item $\{ \alpha_t, \alpha \} \in \mathcal{P}$: Note that $\{ s\alpha_t, s\alpha \} \in \mathcal{P}$ and $\alpha_s \subseteq s\alpha$. This implies $\{ s\alpha_t, \gamma \} \in \mathcal{P}$ for each $\gamma \in (\alpha_s, s\alpha)$. We compute $u_s \tau_s. \left( u_{st}. u_{\alpha} \right) = u_s u_t. u_{s\alpha} = u_t u_{st}. u_{s\alpha} [u_{s\alpha}, u_s] = u_t. u_{s\alpha} [u_{s\alpha}, u_{st}] [u_{s\alpha}, u_s] [[u_{s\alpha}, u_s], u_{st}]$ and the claim follows.
			
			\item $-\alpha_t \subseteq \alpha$: Then $\alpha_t \subseteq t\alpha$ and we deduce from the previous case that $(\tau_s \tau_t)^3. \left( u_{st} u_s. u_{\alpha} \right) = \tau_t (\tau_t \tau_s)^3. \left( u_s u_{st}. u_{t\alpha} \right) = \tau_t. \left( u_s u_{st}. u_{t\alpha} \right) = u_{st} u_s. u_{\alpha}$. This finishes the claim.
		\end{enumerate}
		
		\item $u = u_t u_{st}$: Interchanging $s$ and $t$ in the previous case, the claim follows. \qedhere
	\end{itemize}
\end{proof}

\begin{lemma}\label{Lemma: mst=4}
	Suppose $m_{st} = 4$. Then $[(\tau_s \tau_t)^4, n] = 1$ holds for all $n \in N_{s, t}$ in $P_s *_{U_+} P_t$.
\end{lemma}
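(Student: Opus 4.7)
Following the template of Lemmas \ref{Lemma: mst=2} and \ref{Lemma: mst=3}, the plan is to prove the equivalent statement $(\tau_s\tau_t)^2.n = (\tau_t\tau_s)^2.n$ for every generator $n = u.u_\alpha$ of $N_{s,t}$, where $u \in U_{s,t}$ and $\alpha \in \Phi_+ \setminus \Phi_+^{\{s,t\}}$, by induction on the length of $u$ in $U_{s,t}$.

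For the base case $u = 1$, iterated application of the identity $\tau_r.u_\beta = u_{r\beta}$ from Proposition \ref{Proposition: Existence of taus} yields $(\tau_s\tau_t)^2.u_\alpha = u_{(st)^2\alpha}$ and $(\tau_t\tau_s)^2.u_\alpha = u_{(ts)^2\alpha}$. Since $\alpha \notin \Phi^{\{s,t\}}$ and $\Phi^{\{s,t\}}$ is $\langle s,t\rangle$-invariant, all intermediate roots remain positive and distinct from $\alpha_s,\alpha_t$, so Proposition \ref{Proposition: Existence of taus} applies at each step. The equality then reduces to $(st)^2 = (ts)^2$ in $W$, which holds because $m_{st} = 4$.

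For the inductive step, write $u = u_r u'$ with $r \in \{s,t\}$ and assume the braid relation acts trivially on $x := u'.u_\alpha$ and on $y := u_r\tau_r.x$. The computation then parallels the one in Lemma \ref{Lemma: mst=3}: first commute $u_r$ through the string $\tau_s\tau_t\tau_s\tau_t$ by repeatedly applying the identities $\tau_s u_t\tau_s = u_{st}$, $\tau_t u_{st}\tau_t = u_{tst}$ (valid since $ts\alpha_t$ is positive and distinct from $\alpha_s,\alpha_t$ when $m_{st}\geq 4$) together with their $s \leftrightarrow t$ counterparts; then use $\tau_r u_r \tau_r = u_r\tau_r u_r$ from $(u_r\tau_r)^3 = 1$ (Corollary \ref{ustaus}); and finally invoke the inductive hypotheses on $x$ and $y$ to rewrite $\tau_t\tau_s\tau_t.x$ as $\tau_s\tau_t\tau_s.x$ (and similarly for $y$), closing the chain. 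The individual sub-cases correspond to the $15$ non-trivial elements of $U_{s,t}$ (each of length at most $4$), and are handled analogously to the enumeration at the end of Lemma \ref{Lemma: mst=3}, with one new layer of cases coming from $u_{sts}$ and $u_{tst}$.

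The main obstacle is the expanded case analysis. In particular, when $u$ involves the inner generators $u_{st}, u_{ts}, u_{sts}, u_{tst}$, one runs into sub-cases depending on whether $\{\alpha_r,\alpha\} \in \P$ or $-\alpha_r \subseteq \alpha$; in the latter case the second formula of Proposition \ref{Proposition: Existence of taus} takes over and Lemma \ref{Lemma: ustaus V} must be used to rewrite the resulting commutators. Furthermore, because the longest element of $\langle s,t\rangle$ now has length $4$, one must carefully track how the dihedral action on $\Phi_+^{\{s,t\}}$ interacts with the non-trivial commutation relation $[u_{\alpha_s}, u_{sts\alpha_t}] = u_{s\alpha_t} u_{st\alpha_s}$ of type $B_2 = C_2$ (cf.\ Example \ref{exprank2rgd} and axiom (CB$2$)); once this bookkeeping is completed, every sub-case reduces to the inductive hypothesis exactly as in the $m_{st}=3$ argument.
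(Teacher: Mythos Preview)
Your overall strategy matches the paper's: reduce to $(\tau_s\tau_t)^2.n = (\tau_t\tau_s)^2.n$, handle $n=u_\alpha$ directly, and for the inductive step peel off a leftmost factor $u_r$, push it through three of the $\tau$'s using $\tau_s\tau_t\tau_s\,u_t\,(\tau_s\tau_t\tau_s)^{-1}=u_t$ (because $sts\alpha_t=\alpha_t$ when $m_{st}=4$), and then apply $(u_r\tau_r)^3=1$ and the inductive hypotheses.

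However, your description of the inductive step contains a genuine error. You write that one ``invoke[s] the inductive hypotheses on $x$ and $y$ to rewrite $\tau_t\tau_s\tau_t.x$ as $\tau_s\tau_t\tau_s.x$''. That is the $m_{st}=3$ braid relation, not the one available here. The inductive hypothesis only gives $(\tau_s\tau_t)^2.x=(\tau_t\tau_s)^2.x$; in the actual reduction one conjugates this by $\tau_t$ to replace $\tau_s\tau_t\tau_s.x$ by $\tau_t\tau_s\tau_t\tau_s\tau_t.x$ (and similarly for $y$), and only then does the chain close. As written, your computation would not go through. A related imprecision: for $m_{st}=4$ there are exactly four positive $\{s,t\}$-roots $\alpha_s, s\alpha_t, t\alpha_s, \alpha_t$, and one has $st\alpha_s=t\alpha_s$, $ts\alpha_t=s\alpha_t$; so your ``$u_{sts}$'' and ``$u_{tst}$'' are not new generators but equal $u_{ts}$ and $u_{st}$ respectively.

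More substantively, the case analysis over the $15$ nontrivial $u$ is the actual content of the lemma, and you do not carry it out. Most cases are routine, but one is not: when $u = u_{st}u_s$ and $-t\alpha_s \subseteq \alpha$, the term $u_s\tau_s.(u_{st}.u_\alpha)$ that the reduction produces does \emph{not} immediately fall under a previously treated case. The paper handles this by first proving braid-invariance for an auxiliary element $u_t u_{ts}.u_{st\alpha}$ (reducing it, after using $[u_s,u_t]=u_{st}u_{ts}$, to the already-established sub-case $\{t\alpha_s,\cdot\}\in\P$), and only then closing the original case via $(\tau_s\tau_t)^4.(u_{st}u_s.u_\alpha) = \tau_t\tau_s(\tau_s\tau_t)^4\tau_s\tau_t.(u_{st}u_s.u_\alpha)=\tau_t\tau_s.(u_tu_{ts}.u_{st\alpha})$. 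Your sketch gives no indication of this detour, and ``handled analogously'' does not cover it.
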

\begin{proof}
	It suffices to show that $(\tau_s \tau_t)^2.n = (\tau_t \tau_s)^2.n$. If $n=u_{\alpha}$, then the claim follows. As before, we let $u = u_t u' \in U_{s, t}$ for some $u, u' \in U_{s, t}$ and assume that the braid relation acts trivial on $u'. u_{\alpha}$ and on $u_t \tau_t. \left( u'. u_{\alpha} \right)$. Then we compute the following:
	\allowdisplaybreaks
	\begin{align*}
		\tau_t \tau_s \tau_t \tau_s. \left( u_t u'. u_{\alpha} \right) &= \tau_t u_t \tau_s \tau_t \tau_s. \left( u'. u_{\alpha} \right) \\
		&= \tau_t u_t \tau_t \tau_s \tau_t \tau_s \tau_t. \left( u'. u_{\alpha} \right) \\
		&= u_t \tau_t u_t \tau_s \tau_t \tau_s \tau_t. \left( u'. u_{\alpha} \right) \\
		&= u_t \tau_t \tau_s \tau_t \tau_s. \left( u_t \tau_t. \left( u'. u_{\alpha} \right) \right) \\
		&= u_t \tau_s \tau_t \tau_s \tau_t. \left( u_t \tau_t. \left( u'. u_{\alpha} \right) \right) \\
		&= \tau_s \tau_t \tau_s u_t \tau_t. \left( u_t \tau_t. \left( u'. u_{\alpha} \right) \right) \\
		&= \tau_s \tau_t \tau_s \tau_t. \left( u_t u'. u_{\alpha} \right)
	\end{align*}
	We distinguish the following cases:
	\begin{itemize}[leftmargin=*]
		\item $u = u_r$ for some $r\in \{s, t\}$: Then $u' = 1$ and $u_r \tau_r. u_{\alpha} = u_{r\alpha} [u_{r\alpha}, u_r]$.
		
		\item A similar result holds for $u \in \{ u_{ts}, u_{st} \}$.
		
		\item $u = u_{ts} u_s$: Then $u' = u_{ts}$ and $u_s \tau_s. \left( u_{ts}. u_{\alpha} \right) = u_s u_{ts}. u_{s\alpha} = u_{ts}. u_{s\alpha} [u_{s\alpha}, u_s]$.
		
		\item $u = u_t u_{st}$: This follows similarly as in the previous case.
		
		\item $u = u_{st} u_s$: Then $u' = u_{st}$ and we distinguish the following two cases:
		\begin{enumerate}[label=(\alph*)]
			\item $\{ t\alpha_s, \alpha \} \in \P$: Note that $\{ t\alpha_s, s\alpha \} \in \mathcal{P}$ and $\alpha_s \subseteq s\alpha$. This implies $\{ t\alpha_s, \gamma \} \in \mathcal{P}$ for each $\gamma \in (\alpha_s, s\alpha)$. We have $u_s \tau_s. \left( u_{st}. u_{\alpha} \right) = u_s u_t. u_{s\alpha} = u_t u_{ts} u_{st}. u_{s\alpha} [u_{s\alpha}, u_s] = u_t u_{st}. u_{s\alpha} [u_{s\alpha}, u_{ts}] [u_{s\alpha}, u_s] [[u_{s\alpha}, u_s], u_{ts}]$. As we have already shown that the braid relation acts trivial in the cases $u' \in \{ 1, u_t, u_t u_{ts} \}$, the claim follows.
			
			\item $-t\alpha_s \subseteq \alpha$: Note that $t\alpha_s \subseteq tst\alpha$ and (as $-s\alpha_t \subseteq \alpha$) $\alpha_t \subseteq tst\alpha$. This implies $\{ t\alpha_s, \gamma \} \in \mathcal{P}$ for each $\gamma \in (\alpha_t, tst\alpha)$.
			
			We first check that the braid relation acts trivial on $u_t u_{ts}. u_{st\alpha}$. We have $u' = u_{ts}$ and we compute $u_t \tau_t. \left( u_{ts}. u_{st\alpha} \right) = u_t u_s. u_{tst\alpha} = u_s u_{st} u_{ts} u_t. u_{tst\alpha} = u_{st} u_s u_{ts}. u_{tst\alpha} [u_{tst\alpha}, u_t] = u_{st} u_s. u_{tst\alpha} [u_{tst\alpha}, u_{ts}] [u_{tst\alpha}, u_t] [[u_{tst\alpha}, u_t], u_{ts}]$. Now we can apply $(a)$ and the previous cases and the braid relation acts trivial on $u_t u_{ts}. u_{st\alpha}$. Thus we have $(\tau_s \tau_t)^4. \left( u_{st} u_s. u_{\alpha} \right) = \tau_t \tau_s (\tau_s \tau_t)^4. \left( u_t u_{ts}. u_{st\alpha} \right) = \tau_t \tau_s. \left( u_t u_{ts}. u_{st\alpha} \right) = u_{st} u_s. u_{\alpha}$ and the claim follows.
		\end{enumerate}
		
		\item $u \in \{ u_t u_{ts}, u_{ts} u_{st} \}$: Note that $\tau_t \tau_s. u_t u_{ts} = \tau_t. u_{st} u_{ts} = u_{st} u_s$.
		
		\item $u = u_t u_s$: Then $u' = u_s$ and $u_t \tau_t. \left( u_s. u_{\alpha} \right) = u_t u_{ts}. u_{t\alpha} = u_{ts}. u_{t\alpha} [u_{t\alpha}, u_t]$.
		
		\item $u = u_t u_{st} u_s$: Then $u' = u_{st} u_s$ and $u_t \tau_t. \left( u_{st} u_s. u_{\alpha} \right) = u_t u_{st} u_{ts}. u_{t\alpha} = u_{st} u_{ts}. u_{t\alpha} [u_{t\alpha}, u_t]$.
		
		\item $u = u_t u_{ts} u_s$: Then $u' = u_{ts} u_s$ and $u_t \tau_t. \left( u_{ts} u_s. u_{\alpha} \right) = u_t u_s u_{ts}. u_{t\alpha} = u_{st} u_s. u_{t\alpha} [u_{t\alpha}, u_t]$.
		
		\item $u = u_{ts} u_{st} u_s$: Then $u' = u_{ts} u_{st}$ and $u_s \tau_s. \left( u_{ts} u_{st}. u_{\alpha} \right) = u_s u_{ts} u_t. u_{s\alpha} = u_t u_{st}. u_{s\alpha} [u_{s\alpha}, u_s]$.
		
		\item $u = u_t u_{ts} u_{st}$: This follows similar as in the previous case.
		
		\item $u = u_t u_{ts} u_{st} u_s = u_s u_t$: Then $u' = u_t$ and $u_s \tau_s. \left( u_t. u_{\alpha} \right) = u_s u_{st}. u_{s\alpha} = u_{st}. u_{s\alpha} [u_{s\alpha}, u_s]$. \qedhere
	\end{itemize}
\end{proof}

\begin{lemma}\label{Lemma: mst=6}
	Suppose $m_{st} = 6$. Then $[(\tau_s \tau_t)^6, n] = 1$ holds for all $n \in N_{s, t}$ in $P_s *_{U_+} P_t$.
\end{lemma}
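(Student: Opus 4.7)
My plan is to follow the template of Lemmas \ref{Lemma: mst=2}--\ref{Lemma: mst=4}. First I would reduce $[(\tau_s\tau_t)^6,n] = 1$ to the equivalent assertion $(\tau_s\tau_t)^3.n = (\tau_t\tau_s)^3.n$ for $n$ of the form $u^{-1}u_\alpha u$ with $u \in U_{s,t}$ and $\alpha \in \Phi_+\setminus\Phi_+^{\{s,t\}}$, using that $((\tau_s\tau_t)^3)^{-1} = (\tau_t\tau_s)^3$, so squaring the equation yields exactly $[(\tau_s\tau_t)^6,n] = 1$.

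The pivotal algebraic input is the $m_{st}=6$ analogue of the commutation $\tau_s\tau_t\tau_s\cdot u_t = u_t\cdot\tau_s\tau_t\tau_s$ that drove the proof for $m_{st}=4$. Specifically, I would show that the longer half-braid $(\tau_s\tau_t)^2\tau_s$ (five alternating factors, corresponding in $W$ to the reflection $ststs$) centralises $u_t$ in $P_s *_{U_+} P_t$. Verification proceeds by tracking the conjugation one factor at a time and identifying each intermediate root in $\Phi_+^{\{s,t\}}$: with $\beta_2 := s\alpha_t$ and $\beta_4 := sts\alpha_t$, the trace $u_t \mapsto u_{\beta_2} \mapsto u_{\beta_4} \mapsto u_{\beta_4} \mapsto u_{\beta_2} \mapsto u_t$ returns $u_t$ after the five steps. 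The decisive new ingredient is the fixed point $s\beta_4 = \beta_4$, which holds because $s$ commutes with $r_{\beta_4} = tstst$ in $W$: both $s\cdot tstst$ and $tstst\cdot s$ reduce via the $(st)^6 = 1$ relation to the central longest element $w_0 = (st)^3 = (ts)^3$.

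The inductive step then parallels the $m_{st}=4$ calculation. Writing $u = u_tu'$ (with the dual case $u = u_su''$ handled by the $s\leftrightarrow t$ symmetry), I would push $u_t$ leftward through $\tau_t\tau_s\tau_t\tau_s\tau_t\tau_s$ by combining the key identity above with the $\Sym(3)$-relation $\tau_tu_t\tau_t = u_t\tau_tu_t$ holding inside $P_t$. Together with the inductive hypothesis applied to the strictly smaller expressions $u'.u_\alpha$ and $u_t\tau_t.(u'.u_\alpha)$, this interchanges $(\tau_t\tau_s)^3.(u.u_\alpha)$ with $(\tau_s\tau_t)^3.(u.u_\alpha)$, closing the induction modulo a finite list of base cases $u \in U_{s,t}$ that admit no productive decomposition.

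The principal obstacle is the bulk of this remaining case analysis. Since $\vert U_{s,t}\vert = 2^6 = 64$ here — versus $16$ for $C_2$ — and the $G_2$ commutation relations from Example \ref{exprank2rgd} are the most intricate in rank two (notably $[u_1,u_6] = u_2u_3u_4u_5$, producing long expansions after several conjugations), each base case demands careful bookkeeping. I would organise the enumeration by the normal form of $u$ along a fixed minimal gallery of type $(s,t,s,t,s,t)$, and for each short product verify that $(\tau_s\tau_t)^3.(u.u_\alpha)$ and $(\tau_t\tau_s)^3.(u.u_\alpha)$ collapse to the same element of $U_+$ via the $G_2$ relations and the Weyl-invariance of $\mathcal{M}$, in the same style as the cases exhibited in Lemma \ref{Lemma: mst=4} but with substantially more subcases arising from the additional terms in the $G_2$ commutators.
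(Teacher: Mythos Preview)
Your plan is correct and coincides with the paper's proof. The reduction to $(\tau_s\tau_t)^3.n=(\tau_t\tau_s)^3.n$, the centralisation identity $(\tau_s\tau_t)^2\tau_s\cdot u_t=u_t\cdot(\tau_s\tau_t)^2\tau_s$ (your trace through $\beta_2,\beta_4$ and the fixed point $s\beta_4=\beta_4$ is exactly right), and the inductive rewriting of $u=u_iu'$ via the $\Sym(3)$-relation all appear verbatim in the paper's argument.

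One point worth flagging: the ``base cases'' are not a short residual list but essentially \emph{all} $u\in U_{s,t}$, each reduced to previously treated ones by rewriting $u_i\tau_i.(u'.u_\alpha)$. Moreover, for several of these (e.g.\ $u=u_2u_1$ and $u=u_3u_1$ in the paper's numbering with $(t,s)\in E(S)$) the reduction requires a further sub-case split on the position of $\alpha$ relative to the walls of $\Phi^{\{s,t\}}$---distinguishing whether pairs such as $\{\alpha_3,\alpha\}$, $\{\alpha_4,\alpha\}$, $\{\alpha_5,\alpha\}$ are prenilpotent or whether containments like $-\alpha_5\subseteq\alpha$ hold---so that the commutator expansions $u_j.u_\beta=u_\beta[u_\beta,u_j]$ can be controlled. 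Your phrase ``subcases arising from the additional terms in the $G_2$ commutators'' gestures at this, but be aware that these geometric sub-distinctions on $\alpha$, not merely the length of the commutator words, are what make the $G_2$ case genuinely longer than $C_2$.
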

\begin{proof}
	Without loss of generality we assume that $(t, s) \in E(S)$ and we define $\tau_1 := \tau_s$ and $\tau_6 := \tau_t$. Moreover, we will write for short $1\alpha := \tau_1 \alpha$ ($6\alpha := \tau_6 \alpha$ respectively).
	
	It suffices to show that $(\tau_1 \tau_6)^3. n = (\tau_6 \tau_1)^3. n$. If $n = u_{\alpha}$, then the claim follows. As before, we let $u = u_i u' \in U_{s, t}$ for some $u, u' \in U_{s, t}$ and $i \in \{1, 6\}$ and assume that the braid relation acts trivial on $u'. u_{\alpha}$ and on $u_i \tau_i. \left( u'. u_{\alpha} \right)$. Let $j \in \{1, 6\}$ be different from $i$. Then we compute the following:
	\allowdisplaybreaks
	\begin{align*}
		\tau_i \tau_j \tau_i \tau_j \tau_i \tau_j. \left( u_i u'. u_{\alpha} \right) &= \tau_i u_i \tau_j \tau_i \tau_j \tau_i \tau_j. \left( u'. u_{\alpha} \right) \\
		&= \tau_i u_i \tau_i \tau_j \tau_i \tau_j \tau_i \tau_j \tau_i. \left( u'. u_{\alpha} \right) \\
		&= u_i \tau_i u_i \tau_j \tau_i \tau_j \tau_i \tau_j \tau_i. \left( u'. u_{\alpha} \right) \\
		&= u_i \tau_i \tau_j \tau_i \tau_j \tau_i \tau_j. \left( u_i \tau_i. \left( u'. u_{\alpha} \right) \right) \\
		&= u_i \tau_j \tau_i \tau_j \tau_i \tau_j \tau_i. \left( u_i \tau_i. \left( u'. u_{\alpha} \right) \right) \\
		&= \tau_j \tau_i \tau_j \tau_i \tau_j u_i \tau_i. \left( u_i \tau_i. \left( u'. u_{\alpha} \right) \right) \\
		&= \tau_j \tau_i \tau_j \tau_i \tau_j \tau_i. \left( u_i u'. u_{\alpha} \right)
	\end{align*}
	Before we distinguish all cases we compute a few commutation relations, which we will use without referring to them:
	\begin{enumerate}[label=(\arabic*)]
		\item $u_1 u_5 u_6 = u_1 u_6 [u_6, u_1] u_2 u_3 u_4 = u_6 u_1 [u_1, u_3] u_3 u_4 = u_6 u_4 u_3 u_1$;
		
		\item $u_1 u_3 u_5 = u_3 u_2 u_1 u_5 = u_3 u_2 u_5 u_4 u_2 u_1 = u_5 u_3 u_1$;
	\end{enumerate}
	\begin{itemize}[leftmargin=*]
		\item $u = u_i$ for some $i \in \{1, 6\}$: Then $u' = 1$ and $u_i \tau_i. u_{\alpha} = u_{i\alpha} [u_{i\alpha}, u_i]$.
		
		\item $u \in \{u_2, \ldots, u_5\}$: Then the claim follows similar as in the previous case.
		
		\item $u = u_2 u_1$: Then $u' = u_2$ and we distinguish the following cases:
		\begin{enumerate}[label=(\alph*)]
			\item $\{ \alpha_3, \alpha \} \in \mathcal{P}$: Then $\{ \alpha_5, 1\alpha \} \in \mathcal{P}$ and hence there exists $w\in (-\alpha_5) \cap (-1\alpha)$. Note that for all $i \in \{2, 3, 4\}$ we have $w\in (-\alpha_5) \cap (-1\alpha) \subseteq (-\alpha_5) \cap (-\alpha_1) \subseteq (-\alpha_i)$, as $\alpha_i \in (\alpha_1, \alpha_5)$. This implies $\alpha_1, \ldots, \alpha_5, 1\alpha \in \Phi(w) = \{ \alpha \in \Phi_+ \mid w \notin \alpha \}$. As $-\alpha_6 \subseteq 1\alpha$, we have $(-\alpha_6) \cap (-1\alpha) = \emptyset$ and hence $\alpha_6 \notin \Phi(w)$. Using Lemma \ref{Lemma: Definition of UG} there exists $\beta_1, \ldots, \beta_k \in \Phi(w) \backslash \{ \alpha_1, \ldots, \alpha_5 \}$ such that $u_5 \cdots u_1. u_{1\alpha} = u_{\beta_1} \cdots u_{\beta_k}$. We compute $u_1 \tau_1. \left( u_2. u_{\alpha} \right) = u_1 u_6. u_{1\alpha} = u_6 u_5 u_4 u_3 u_2 u_1. u_{1\alpha} = u_6. u_{\beta_1} \cdots u_{\beta_k}$ and the claim follows from the previous case.
			
			\item $\{ -\alpha_5, \alpha \}$ is prenilpotent: Then $\{ \alpha_3, 61616\alpha \} \in \mathcal{P}$ and $\alpha_6 \subseteq 61616\alpha$. As in $(a)$ we deduce $u_3 u_4 u_5 u_6. u_{61616\alpha} = u_{\beta_1} \cdots u_{\beta_m}$ for $\beta_i \notin \{ \alpha_1, \ldots, \alpha_6 \}$. As $w \notin \beta_i, \alpha_3$, we note that $\{ \alpha_3, \beta_i \} \in \mathcal{P}$. 
			
			We first show that the braid relation acts trivial on $u_6 u_5. u_{1616\alpha}$. We have $u' = u_5$ and we compute $u_6 \tau_6. \left( u_5. u_{1616\alpha} \right) = u_6 u_1. u_{61616\alpha} = u_1 u_2 u_3 u_4 u_5 u_6. u_{61616\alpha} = u_2 u_1. u_{\beta_1} \cdots u_{\beta_m}$. It follows from $(a)$ and the previous cases that that the braid relation acts trivial on $u_6 u_5. u_{1616\alpha}$ and we finally compute $(\tau_1 \tau_6)^2. \left( u_2 u_1. u_{\alpha} \right) = u_6 u_5. u_{1616\alpha}$.
			
			\item $\{ \alpha_4, \alpha \} \in \mathcal{P}$: As before, we have $u_4 u_3 u_2 u_1. u_{1\alpha} = u_{\beta_1} \cdots u_{\beta_k}$ with $\beta_i \notin \{ \alpha_1, \ldots, \alpha_6 \}$ and $\{ \alpha_3, \beta_i \} \in \mathcal{P}$. We compute $u_1 \tau_1. \left( u_2. u_{\alpha} \right) = u_1 u_6. u_{1\alpha} = u_6 u_5 u_4 u_3 u_2 u_1. u_{1\alpha} = u_6 u_5. u_{\beta_1} \cdots u_{\beta_k}$.
			
			We now have to show that the braid relation acts trivial on $u_6 u_5. u_{\beta_i}$ for every $1 \leq i \leq k$. As $\{ \alpha_3, \beta_i \} \in \mathcal{P}$, it follows that $\{-\alpha_5, 6161\beta_i\}$ is a prenilpotent pair of roots. We can now apply $(b)$ to deduce that the braid relation acts trivial on $u_6 u_5. u_{\beta_1} \cdots u_{\beta_k}$.
		
			\item $-\alpha_4, \alpha_5 \subseteq \alpha$: Then $\alpha_2, (-\alpha_3) \subseteq 1616\alpha$. Note that $(\tau_1 \tau_6)^2. \left( u_2 u_1. u_{\alpha} \right) = u_6 u_5. u_{1616\alpha}$ and it suffices to show that the braid relation acts trivial on $u_6 u_5. u_{1616\alpha}$.
						
			We have $u' = u_5$ and it suffices to show that the braid relation acts trivial on $u_6 \tau_6. (u_5. u_{1616\alpha})$. As $-\alpha_4, \alpha_5 \subseteq \alpha$, we deduce $\alpha_4, -\alpha_3 \subseteq 61616\alpha$. This implies $\{ \alpha_4, 61616\alpha \}, \{ \alpha_6, 61616\alpha \} \in \mathcal{P}$. As before, we have $u_4 u_5 u_6. u_{61616\alpha} = u_{\beta_1} \cdots u_{\beta_k}$ with $\{ -\alpha_3, \beta_i \}$ prenilpotent and $\{ \alpha_4, \beta_i \} \in \mathcal{P}$. We compute $u_6 \tau_6. \left( u_5. u_{1616\alpha} \right) = u_6 u_1. u_{61616\alpha} = u_1 u_2 u_3 u_4 u_5 u_6. u_{61616\alpha} = u_1 u_2 u_3. u_{\beta_1} \cdots u_{\beta_l}$. We have to show that the braid relation acts trivial on $u_1 u_2 u_3. u_{\beta_i}$ for every $1 \leq i \leq k$. If $\{ \alpha_1, \beta_i \} \in \mathcal{P}$, we have $u_1 u_2 u_3. u_{\beta_i} = u_3. u_{\beta_i} [u_{\beta_i}, u_1]$ and the claim follows from the previous cases. Thus we can assume that $-\alpha_1 \subseteq \beta_i$.
			
			Note that $\{ \alpha_5, \beta_i \} \in \mathcal{P}$ and hence $\{ -\alpha_5, 616\beta_i \}$ is a prenilpotent pair of roots. It follows from $(b)$ that the braid relation acts trivial on $u_2 u_1. u_{616\beta_i}$. As $\tau_6 \tau_1 \tau_6. \left( u_2 u_3. u_{\beta_i} \right) = u_2 u_1. u_{616\beta_i} $, the braid relation also acts trivial on $u_2 u_3. u_{\beta_i}$. We compute $u_1 \tau_1. \left( u_2 u_3. u_{\beta_i} \right) = u_1 u_6 u_5. u_{1\beta_i} = u_6 u_4 u_3 u_1. u_{1\beta_i}$. As $\alpha_1 \subseteq 1\beta_i$ and $\{ \alpha_4, 1\beta_i \} \in \mathcal{P}$, we have $u_4 u_3 u_1. u_{1\beta_i} = u_{\gamma_1} \cdots u_{\gamma_m}$ as before and the claim follows.
		\end{enumerate}
		
		\item $u \in \{ u_3 u_2, u_4 u_3, u_5 u_4, u_6 u_5 \}$: Note that $(\tau_1 \tau_6)^2. u_2 u_1 = \tau_1 \tau_6 \tau_1. u_4 u_5 = \tau_1 \tau_6. u_4 u_3 = \tau_1. u_2 u_3 = u_6 u_5$.
		
		\item $u = u_6 u_4$: Then $u' = u_4$ and we distinguish the following two cases:
		\begin{enumerate}[label=(\alph*)]
			\item $\{ \alpha_2, \alpha \} \in \P$: Note that $\{ \alpha_4, 6\alpha \} \in \P$ and $\alpha_6 \subseteq 6\alpha$. This implies $\{ \alpha_4, \gamma \} \in \mathcal{P}$ for each $\gamma \in (\alpha_6, 6\alpha)$. We compute $u_6 \tau_6. \left( u_4. u_{\alpha} \right) = u_6 u_2. u_{6\alpha} = u_2 [u_2, u_6]. u_{6\alpha} [u_{6\alpha}, u_6] = u_2. u_{6\alpha} [u_{6\alpha}, u_4] [u_{6\alpha}, u_6] [[u_{6\alpha}, u_6], u_4]$.
			
			\item $-\alpha_2 \subseteq \alpha$: Then $\alpha_2 \subseteq 161\alpha$ and we deduce from the previous case $(\tau_6 \tau_1)^6. \left( u_6 u_4. u_{\alpha} \right) = \tau_1 \tau_6 \tau_1 (\tau_1 \tau_6)^6. \left( u_4 u_6. u_{161\alpha} \right) = \tau_1 \tau_6 \tau_1. \left( u_4 u_6. u_{161\alpha} \right) = u_6 u_4 u_{\alpha} u_4 u_6$.
		\end{enumerate}
		
		\item $u = u_4 u_2$: Note $\tau_1. u_4 u_2 = u_4 u_6 = u_6 u_4$.
		
		\item $u = u_6 u_2$: Then $u' = u_2$ and $u_6 \tau_6. \left( u_2. u_{\alpha} \right) = u_6 u_4. u_{6\alpha} = u_4. u_{6\alpha} [u_{6\alpha}, u_6]$.
		
		\item $u = u_6 u_4 u_2$: Note $\tau_1. u_6 u_4 u_2 = u_2 u_4 u_6 = u_6 u_2$.
		
		\item $u = u_3 u_1 = u_1 u_3 u_2$: Then $u' = u_3 u_2$ and we distinguish the following cases:
		\begin{enumerate}[label=(\alph*)]
			\item $\{ \alpha_5, \alpha \} \in \mathcal{P}$: Note that $\{ \alpha_3, 1\alpha \} \in \mathcal{P}$ and $\alpha_1 \subseteq 1\alpha$. This implies $\{ \alpha_3, \gamma \} \in \mathcal{P}$ for each $\gamma \in (\alpha_1, 1\alpha)$. We compute $u_1 \tau_1. \left( u_3 u_2. u_{\alpha} \right) = u_1 u_5 u_6. u_{1\alpha} = u_6 u_4 u_3. u_{1\alpha} [u_{1\alpha}, u_1] = u_6 u_4. u_{1\alpha} [u_{1\alpha}, u_3] [u_{1\alpha}, u_1] [[u_{1\alpha}, u_1], u_3]$.
		
			\item $-\alpha_5 \subseteq \alpha$: Then $\alpha_5 \subseteq 616\alpha$ and we first show that in this case the braid relation acts trivial on $u_1 u_3. u_{616\alpha}$. Note that $\alpha_1, \alpha_3 \subseteq 1616\alpha$ and, as $\alpha_2 \in (\alpha_1, \alpha_3)$, we have $(-\alpha_1) \cap (-\alpha_3) \subseteq (-\alpha_2)$ and hence $\alpha_2 \subseteq \alpha_1 \cup \alpha_3 \subseteq 1616\alpha$. This implies $\{ \alpha_2, \gamma \} \in \mathcal{P}$ for each $\gamma \in (\alpha_1, 1616\alpha)$. We have $u' = u_3$ and $u_1 \tau_1. \left( u_3. u_{616\alpha} \right) = u_1 u_5. u_{1616\alpha} = u_5 u_4 u_2. u_{1616\alpha} [u_{1616\alpha}, u_1] = u_5 u_4. u_{1616\alpha} [u_{1616\alpha}, u_2] [u_{1616\alpha}, u_1] [[u_{1616\alpha}, u_1], u_2]$. Using $(a)$ and the previous cases we deduce that the braid relation acts trivial on $u_1 u_3. u_{616\alpha}$. We conclude $(\tau_1 \tau_6)^6. \left( u_3 u_1. u_{\alpha} \right) = \tau_6 \tau_1 \tau_6 (\tau_6 \tau_1)^6. \left( u_1 u_3. u_{616\alpha} \right) = \tau_6 \tau_1 \tau_6. \left( u_1 u_3. u_{616\alpha} \right) = u_3 u_1. u_{\alpha}$.
		\end{enumerate}
	
		\item $u\in \{ u_3 u_2 u_1, u_5 u_3, u_5 u_4 u_3 \}$: Then $\tau_6 \tau_1 \tau_6. u_3 u_2 u_1 = \tau_6 \tau_1. u_3 u_4 u_5 = \tau_6. u_5 u_4 u_3 = u_1 u_2 u_3$ and $u_3 u_4 u_5 = u_5 u_3, u_1 u_2 u_3 = u_3 u_1$.
		
		\item $u = u_4 u_1$: Then $u' = u_4$ and $u_1 \tau_1. \left( u_4. u_{\alpha} \right) = u_1 u_4. u_{1\alpha} = u_4. u_{1\alpha} [u_{1\alpha}, u_1]$.
		
		\item $u \in \{ u_5 u_2, u_6 u_3 \}$: Note that $\tau_6 \tau_1. u_6 u_3 = \tau_6. u_2 u_5 = u_4 u_1$.
		
		\item $u = u_5 u_4 u_2 u_1 = u_1 u_5$: Then $u' = u_5$ and $u_1 \tau_1. \left( u_5. u_{\alpha} \right) = u_1 u_3. u_{1\alpha} = u_3 u_2. u_{1\alpha} [u_{1\alpha}, u_1]$.
		
		\item $u = u_5 u_1$: Note $\tau_6. u_5 u_1 = u_1 u_5 = u_5 u_4 u_2 u_1$.
		
		\item $u = u_6 u_1$: Then $u' = u_1$ and $u_6 \tau_6. \left( u_1. u_{\alpha} \right) = u_6 u_5. u_{6\alpha} = u_5. u_{6\alpha} [u_{6\alpha}, u_6]$.
		
		\item $u = u_5 u_3 u_1 = u_1 u_3 u_5 = u_1 u_5 u_4 u_3$: Then $u' = u_5 u_4 u_3$ and $u_1 \tau_1. \left( u_5 u_4 u_3. u_{\alpha} \right) = u_1 u_3 u_4 u_5. u_{1\alpha} = u_2 u_1 [u_1, u_6]. u_{1\alpha} = u_2 [u_6, u_1] u_1. u_{1\alpha} = u_5 u_4 u_3. u_{1\alpha} [u_{1\alpha}, u_1]$.
		
		\item $u = u_6 u_5 u_4$: Then $u' = u_5 u_4$ and $u_6 \tau_6. \left( u_5 u_4. u_{\alpha} \right) = u_6 u_1 u_2. u_{6\alpha} = u_2 [u_2, u_6] u_6 u_1. u_{6\alpha} = u_2 u_4 [u_6, u_1] u_1 u_6. u_{6\alpha} = u_5 u_3 u_1. u_{6\alpha} [u_{6\alpha}, u_6]$.
		
		\item $u = u_4 u_3 u_2$: Note that $\tau_1. u_4 u_3 u_2 = u_4 u_5 u_6 = u_6 u_5 u_4$.
		
		\item $u = u_4 u_3 u_1 = u_1 u_4 u_3 u_2$: Then $u' = u_4 u_3 u_2$ and $u_1 \tau_1. \left( u_4 u_3 u_2. u_{\alpha} \right) = u_1 u_4 u_5 u_6. u_{1\alpha} = u_4. \left( u_1 u_5 u_6. u_{1\alpha} \right) = u_4. \left( u_6 u_4 u_3 u_1. u_{1\alpha} \right) = u_6 u_3. u_{1\alpha} [u_{1\alpha}, u_1]$.
	
		\item $u \in \{ u_6 u_5 u_3, u_5 u_4 u_3 u_2 \}$: Note $\tau_6 \tau_1. u_6 u_5 u_3 = \tau_6. u_2 u_3 u_5 = u_4 u_3 u_1$ and $u_2 u_3 u_5 = u_5 u_4 u_3 u_2$.
		
		\item $u = u_6 u_4 u_3$: Then $u' = u_4 u_3$ and $u_6 \tau_6. \left( u_4 u_3. u_{\alpha} \right) = u_6 u_2 u_3. u_{6\alpha} = u_2 [u_2, u_6] u_6 u_3. u_{6\alpha} = u_4 u_3 u_2. u_{6\alpha} [u_{6\alpha}, u_6]$.
	
		\item $u \in \{ u_4 u_2 u_1, u_5 u_4 u_2 \}$: Note that $\tau_1 \tau_6. u_4 u_2 u_1 = \tau_1. u_2 u_4 u_5 = u_6 u_4 u_3$ and $u_2 u_4 u_5 = u_5 u_4 u_2$.
		
		\item $u = u_5 u_2 u_1 = u_1 u_5 u_4$: Then $u' = u_5 u_4$ and $u_1 \tau_1. \left( u_5 u_4. u_{\alpha} \right) = u_1 u_3 u_4. u_{1\alpha} = u_4 u_3 u_2. u_{1\alpha} [u_{1\alpha}, u_1]$.
		
		\item $u = u_6 u_2 u_1$: Then $u' = u_2 u_1$ and $u_6 \tau_6. \left( u_2 u_1. u_{\alpha} \right) = u_6 u_4 u_5. u_{6\alpha} = u_5 u_4. u_{6\alpha} [u_{6\alpha}, u_6]$.
		
		\item $u = u_6 u_3 u_1$: Then $u' = u_3 u_1$ and $u_6 \tau_6.\left( u_3 u_1. u_{\alpha} \right) = u_6 u_3 u_5. u_{6\alpha} = u_5 u_4 u_3. u_{6\alpha} [u_{6\alpha}, u_6]$.
	
		\item $u = u_5 u_4 u_1 = u_1 u_5 u_2$: Then $u' = u_5 u_2$ and we have $u_1 \tau_1. \left( u_5 u_2. u_{\alpha} \right) = u_1 u_3 u_6. u_{1\alpha} = u_3 [u_3, u_1] [u_1, u_6] u_6 u_1. u_{1\alpha} = u_4 u_5 u_6 u_1. u_{1\alpha} = u_6 u_5 u_4. u_{1\alpha} [u_{1\alpha}, u_1]$.
	
		\item $u = u_6 u_4 u_1$: Then $u' = u_4 u_1$ and $u_6 \tau_6. \left( u_4 u_1. u_{\alpha} \right) = u_6 u_2 u_5. u_{6\alpha} = u_2 [u_2, u_6] u_6 u_5. u_{6\alpha} = u_5 u_4 u_2. u_{6\alpha} [u_{6\alpha}, u_6]$.
	
		\item $u = u_6 u_5 u_1 = u_6 u_5 u_4 u_3 [u_3, u_1] u_1 u_3 u_4 = u_6 [u_6, u_1] u_1 u_3 u_4 = u_1 u_6 u_4 u_3$: Then $u' = u_6 u_4 u_3$ and $u_1 \tau_1. \left( u_6 u_4 u_3. u_{\alpha} \right) = u_1 u_2 u_4 u_5. u_{1\alpha} = u_1 [u_1, u_5] u_5. u_{1\alpha} = u_5. u_{1\alpha} [u_{1\alpha}, u_1]$.
	
		\item $u = u_6 u_3 u_2$: Then $u' = u_3 u_2$ and $u_6 \tau_6. \left( u_3 u_2. u_{\alpha} \right) = u_6 u_3 u_4. u_{6\alpha} = u_4 u_3. u_{6\alpha} [u_{6\alpha}, u_6]$.
		
		\item $u = u_6 u_5 u_4 u_2 = u_2 u_5 u_6$: Then $\tau_1. u_2 u_5 u_6 = u_6 u_3 u_2$.
		
		\item $u = u_5 u_4 u_3 u_1 = u_1 u_3 u_5 u_4 = u_1 u_5 u_3$: Then $u' = u_5 u_3$ and $u_1 \tau_1. \left( u_5 u_3. u_{\alpha} \right) = u_1 u_3 u_5. u_{1\alpha} = u_3 [u_3, u_1] [u_1, u_5] u_5 u_1. u_{1\alpha} = u_3 u_2 u_2 u_4 u_5 u_1. u_{1\alpha} = u_5 u_3. u_{1\alpha} [u_{1\alpha}, u_1]$.
		
		\item $u = u_5 u_3 u_2 u_1$: Then $\tau_6. u_5 u_3 u_2 u_1 = u_1 u_3 u_4 u_5 = u_5 u_4 u_3 u_1$.
	
		\item $u = u_4 u_3 u_2 u_1 = u_1 u_4 u_3$: Then $u' = u_4 u_3$ and $u_1 \tau_1. \left( u_4 u_3. u_{\alpha} \right) = u_1 u_4 u_5. u_{1\alpha} = u_4 u_5 [u_5, u_1] u_1. u_{1\alpha} = u_5 u_2. u_{1\alpha} [u_{1\alpha}, u_1]$.
			
		\item $u \in \{ u_6 u_5 u_4 u_3, u_5 u_3 u_2 \}$: Note $\tau_6 \tau_1. u_6 u_5 u_4 u_3 = \tau_6. u_2 u_3 u_4 u_5 = u_4 u_3 u_2 u_1$ and $u_2 u_3 u_4 u_5 =  u_5 u_3 u_2$.
		
		\item $u = u_6 u_5 u_2$: Then $u' = u_5 u_2$ and $u_6 \tau_6. \left( u_5 u_2. u_{\alpha} \right) = u_6 u_1 u_4. u_{6\alpha} = u_4 [u_6, u_1] u_1 u_6. u_{6\alpha} = u_5 u_3 u_2 u_1. u_{6\alpha} [u_{6\alpha}, u_6]$.
	
		\item $u = u_6 u_4 u_3 u_2$: Then $\tau_1. u_6 u_4 u_3 u_2 = u_2 u_4 u_5 u_6 = u_6 [u_6, u_2] u_2 u_4 u_5 = u_6 u_5 u_2$.
		
		\item $u = u_6 u_3 u_2 u_1$: Then $u' = u_3 u_2 u_1$ and $u_6 \tau_6. \left( u_3 u_2 u_1. u_{\alpha} \right) = u_6 u_3 u_4 u_5. u_{6\alpha} = u_5 u_3. u_{6\alpha} [u_{6\alpha}, u_6]$.
	
		\item $u = u_6 u_4 u_2 u_1$: Then $u' = u_4 u_2 u_1$ and $u_6 \tau_6. \left( u_4 u_2 u_1. u_{\alpha} \right) = u_6 u_2 u_4 u_5. u_{6\alpha} = u_6 u_2 [u_2, u_6] u_5. u_{6\alpha} = u_5 u_2. u_{6\alpha} [u_{6\alpha}, u_6]$.
	
		\item $u = u_6 u_5 u_2 u_1$: Then $u' = u_5 u_2 u_1$ and $u_6 \tau_6. \left( u_5 u_2 u_1. u_{\alpha} \right) = u_6 u_1 u_4 u_5. u_{6\alpha} = u_1 [u_1, u_6] u_6 u_4 u_5. u_{6\alpha} = u_1 u_2 u_3 u_6. u_{6\alpha} = u_3 u_1. u_{6\alpha} [u_{6\alpha}, u_6]$.
		
		\item $u = u_6 u_4 u_3 u_1 = u_5 u_6 [u_6, u_1] u_1 u_2 = u_5 u_1 u_6 u_2 = u_1 u_6 u_5$: Then $u' = u_6 u_5$ and $u_1 \tau_1. \left( u_6 u_5. u_{\alpha} \right) = u_1 u_2 u_3. u_{1\alpha} = u_1 [u_1, u_3] u_3. u_{1\alpha} = u_3. u_{1\alpha} [u_{1\alpha}, u_1]$.
			
		\item $u = u_6 u_5 u_3 u_1 =  u_6 [u_6, u_1] u_1 u_4 u_2 = u_1 u_6 u_4 u_2$: Then $u' = u_6 u_4 u_2$ and $u_1 \tau_1. \left( u_6 u_4 u_2. u_{\alpha} \right) = u_1 u_2 u_4 u_6. u_{1\alpha} = u_6 [u_6, u_1] u_1 u_2. u_{1\alpha} = u_6 u_5 u_4 u_3. u_{1\alpha} [u_{1\alpha}, u_1]$.
	
		\item $u = u_6 u_5 u_4 u_1 = u_6 [u_6, u_1] u_1 u_3 = u_1 u_6 u_3$: Then $u' = u_6 u_3$ and $u_1 \tau_1. \left( u_6 u_3. u_{\alpha} \right) = u_1 u_2 u_5. u_{1\alpha} = u_5 [u_5, u_1] u_1 u_2. u_{1\alpha} = u_5 u_4. u_{1\alpha} [u_{1\alpha}, u_1]$.
		
		\item $u = u_6 u_5 u_3 u_2$: Then $u' = u_5 u_3 u_2$ and $u_6 \tau_6. \left( u_5 u_3 u_2. u_{\alpha} \right) = u_6 u_1 u_3 u_4. u_{6\alpha} = u_5 u_6 u_5 u_4 u_3 u_2 u_1. u_{6\alpha} = u_5 u_6 [u_6, u_1] u_1. u_{6\alpha} = u_5 u_1. u_{6\alpha} [u_{6\alpha}, u_6]$.
		
		\item $u = u_5 \cdots u_1 = [u_6, u_1] u_1 = u_1 [u_1, u_6] = u_1 u_2 u_3 u_4 u_5 = u_1 u_5 u_3 u_2$: Then $u' = u_5 u_3 u_2 = u_2 u_3 u_4 u_5$ and $u_1 \tau_1. \left( u_2 u_3 u_4 u_5. u_{\alpha} \right) = u_1 u_6 u_5 u_4 u_3. u_{1\alpha} = u_1 u_6 [u_6, u_1] u_2. u_{1\alpha} = u_6 u_2. u_{1\alpha} [u_{1\alpha}, u_1]$.
		
		\item $u = u_6 u_4 u_3 u_2 u_1$: Then $u' = u_4 u_3 u_2 u_1$ and $u_6 \tau_6. \left( u_4 u_3 u_2 u_1. u_{\alpha} \right) = u_6 u_2 u_3 u_4 u_5. u_{6\alpha} = u_6 [u_1, u_6]. u_{6\alpha} = [u_6, u_1] u_6. u_{6\alpha} = u_5 u_4 u_3 u_2. u_{6\alpha} [u_{6\alpha}, u_6]$.
		
		\item $u = u_6 u_5 u_3 u_2 u_1 = u_6 [u_6, u_1] u_1 u_4 = u_1 u_6 u_4$: Then $u' = u_6 u_4$ and $u_1 \tau_1. \left( u_6 u_4. u_{\alpha} \right) = u_1 u_2 u_4. u_{1\alpha} = u_4 u_2. u_{1\alpha} [u_{1\alpha}, u_1]$.
		
		\item $u = u_6 u_5 u_4 u_2 u_1 = u_6 [u_6, u_1] u_1 u_3 u_2 = u_1 u_6 u_3 u_2$: Then $u' = u_6 u_3 u_2$ and $u_1 \tau_1. \left( u_6 u_3 u_2. u_{\alpha} \right) = u_1 u_2 u_5 u_6. u_{1\alpha} = u_2 u_5 [u_5, u_1] u_1 u_6. u_{1\alpha} = u_4 u_5 u_6 [u_6, u_1] u_1. u_{1\alpha} = u_6 u_3 u_2. u_{1\alpha} [u_{1\alpha}, u_1]$.
		
		\item $u = u_6 u_5 u_4 u_3 u_1 = u_6 [u_6, u_1] u_1 u_2 = u_1 u_6 u_2$: Then $u' = u_6 u_2$ and $u_1 \tau_1. \left( u_6 u_2. u_{\alpha} \right) = u_1 u_2 u_6. u_{1\alpha} = u_1 u_6 u_4 u_2. u_{1\alpha} = u_6 [u_6, u_1] u_4 u_2 u_1. u_{1\alpha}= u_6 u_5 u_3. u_{1\alpha} [u_{1\alpha}, u_1]$.
	
		\item $u = u_6 u_5 u_4 u_3 u_2$: Then $u' = u_5 u_4 u_3 u_2$ and $u_6 \tau_6.\left( u_5 u_4 u_3 u_2.u_{\alpha} \right) = u_6 u_1 u_2 u_3 u_4. u_{6\alpha} = u_6 u_1 [u_1, u_6] u_5. u_{6\alpha} = u_1 u_6 u_5. u_{6\alpha} = u_5 u_4 u_2 u_1. u_{6\alpha} [u_{6\alpha}, u_6]$.
		
		\item $u = u_6 \cdots u_1 = u_6 [u_6, u_1] u_1 = u_1 u_6$: Then $u' = u_6$ and $u_1 \tau_1. \left( u_6. u_{\alpha} \right) = u_1 u_2. u_{1\alpha} = u_2. u_{1\alpha} [u_{1\alpha}, u_1]$. \qedhere
	\end{itemize}
\end{proof}

\bibliographystyle{amsalpha}
\bibliography{references}

\providecommand{\bysame}{\leavevmode\hbox to3em{\hrulefill}\thinspace}
\providecommand{\MR}{\relax\ifhmode\unskip\space\fi MR }
\providecommand{\MRhref}[2]{%
  \href{http://www.ams.org/mathscinet-getitem?mr=#1}{#2}
}
\providecommand{\href}[2]{#2}
\begin{thebibliography}{CRW17}

\bibitem[AB08]{AB08}
Peter Abramenko and Kenneth~S. Brown, \emph{Buildings}, Graduate Texts in
  Mathematics, vol. 248, Springer, New York, 2008, Theory and applications.
  \MR{2439729}

\bibitem[AM97]{AM97}
Peter Abramenko and Bernhard M\"{u}hlherr, \emph{Pr\'{e}sentations de certaines
  {$BN$}-paires jumel\'{e}es comme sommes amalgam\'{e}es}, C. R. Acad. Sci.
  Paris S\'{e}r. I Math. \textbf{325} (1997), no.~7, 701--706. \MR{1483702}

\bibitem[Bis]{BiConstruction}
Sebastian Bischof, \emph{Construction of {C}ommutator {B}lueprints},
  manuscript, 29 pp.

\bibitem[Bis22]{Bi22}
\bysame, \emph{On commutator relations in 2-spherical {RGD}-systems}, Comm.
  Algebra \textbf{50} (2022), no.~2, 751--769. \MR{4375537}

\bibitem[Bis23]{BiDiss}
\bysame, \emph{Construction of {RGD}-systems of type $(4,4,4)$ over
  $\mathbb{F}_2$}, PhD thesis, Justus-Liebig-Universität Giessen, 2023.

\bibitem[Bou02]{Bo68}
Nicolas Bourbaki, \emph{Lie groups and {L}ie algebras. {C}hapters 4--6},
  Elements of Mathematics (Berlin), Springer-Verlag, Berlin, 2002, Translated
  from the 1968 French original by Andrew Pressley. \MR{1890629}

\bibitem[BP95]{BP95}
Yuly Billig and Arturo Pianzola, \emph{Root strings with two consecutive real
  roots}, Tohoku Math. J. (2) \textbf{47} (1995), no.~3, 391--403. \MR{1344909}

\bibitem[Cap07]{Ca07}
Pierre-Emmanuel Caprace, \emph{A uniform bound on the nilpotency degree of
  certain subalgebras of {K}ac-{M}oody algebras}, J. Algebra \textbf{317}
  (2007), no.~2, 867--876. \MR{2362945}

\bibitem[CR09]{CR09}
Pierre-Emmanuel Caprace and Bertrand R\'{e}my, \emph{Simplicity and
  superrigidity of twin building lattices}, Invent. Math. \textbf{176} (2009),
  no.~1, 169--221. \MR{2485882}

\bibitem[CRW17]{CRW17b}
Pierre-Emmanuel Caprace, Colin~D. Reid, and George~A. Willis, \emph{Locally
  normal subgroups of totally disconnected groups. {P}art {II}: {C}ompactly
  generated simple groups}, Forum Math. Sigma \textbf{5} (2017), Paper No. e12,
  89. \MR{3659769}

\bibitem[GHM16]{GHM16}
Matthias Gr\"{u}ninger, Max Horn, and Bernhard M\"{u}hlherr, \emph{Moufang twin
  trees of prime order}, Adv. Math. \textbf{302} (2016), 1--24. \MR{3545922}

\bibitem[GW21]{GW21}
Helge Gl\"{o}ckner and George~A. Willis, \emph{Locally pro-{$p$} contraction
  groups are nilpotent}, J. Reine Angew. Math. \textbf{781} (2021), 85--103.
  \MR{4343096}

\bibitem[Mor87]{Mo87}
Jun Morita, \emph{Commutator relations in {K}ac-{M}oody groups}, Proc. Japan
  Acad. Ser. A Math. Sci. \textbf{63} (1987), no.~1, 21--22. \MR{892949}

\bibitem[MR95]{MR95}
Bernhard M\"{u}hlherr and Mark Ronan, \emph{Local to global structure in twin
  buildings}, Invent. Math. \textbf{122} (1995), no.~1, 71--81. \MR{1354954}

\bibitem[Par21]{PaDiss21}
M.~Parr, \emph{Moufang twin trees and $\mathbb{Z}$-systems}, PhD thesis,
  Justus-Liebig-Universität Giessen, 2021.

\bibitem[PT84]{PT84}
S.~E. Payne and J.~A. Thas, \emph{Finite generalized quadrangles}, Research
  Notes in Mathematics, vol. 110, Pitman (Advanced Publishing Program), Boston,
  MA, 1984. \MR{767454}

\bibitem[R\'02]{Re03}
Bertrand R\'{e}my, \emph{Groupes de {K}ac-{M}oody d\'{e}ploy\'{e}s et presque
  d\'{e}ploy\'{e}s}, Ast\'{e}risque (2002), no.~277, viii+348. \MR{1909671}

\bibitem[RR06]{RR06}
Bertrand R\'{e}my and Mark Ronan, \emph{Topological groups of {K}ac-{M}oody
  type, right-angled twinnings and their lattices}, Comment. Math. Helv.
  \textbf{81} (2006), no.~1, 191--219. \MR{2208804}

\bibitem[Seg09]{Seg09}
Yoav Segev, \emph{Proper {M}oufang sets with abelian root groups are special},
  J. Amer. Math. Soc. \textbf{22} (2009), no.~3, 889--908. \MR{2505304}

\bibitem[Ser03]{Se79}
Jean-Pierre Serre, \emph{Trees}, Springer Monographs in Mathematics,
  Springer-Verlag, Berlin, 2003, Translated from the French original by John
  Stillwell, Corrected 2nd printing of the 1980 English translation.
  \MR{1954121}

\bibitem[SW08]{SW08}
Yoav Segev and Richard~M. Weiss, \emph{On the action of the {H}ua subgroups in
  special {M}oufang sets}, Math. Proc. Cambridge Philos. Soc. \textbf{144}
  (2008), no.~1, 77--84. \MR{2388234}

\bibitem[Tit92]{Ti92}
Jacques Tits, \emph{Twin buildings and groups of {K}ac-{M}oody type}, Groups,
  combinatorics \& geometry ({D}urham, 1990), edited by M. Liebeck and J. Saxl,
  London Math. Soc. Lecture Note Ser., vol. 165, Cambridge Univ. Press,
  Cambridge, 1992, pp.~249--286. \MR{1200265}

\bibitem[Tit13a]{Ti13}
\bysame, \emph{\oe uvres/{C}ollected works. {V}ol. {I}, {II}, {III}, {IV}},
  Heritage of European Mathematics, European Mathematical Society (EMS),
  Z\"{u}rich, 2013. \MR{3157464}

\bibitem[Tit13b]{Ti73-00}
\bysame, \emph{R\'{e}sum\'{e}s des cours au {C}oll\`ege de {F}rance
  1973--2000}, Documents Math\'{e}matiques (Paris), vol.~12, Soci\'{e}t\'{e}
  Math\'{e}matique de France, Paris, 2013. \MR{3235648}

\bibitem[TW02]{TW02}
Jacques Tits and Richard~M. Weiss, \emph{Moufang polygons}, Springer Monographs
  in Mathematics, Springer-Verlag, Berlin, 2002. \MR{1938841}

\end{thebibliography}

\end{document}